\newtheorem{theorem}{Theorem}
\newtheorem{conjecture}[theorem]{Conjecture}
\newtheorem{lemma}[theorem]{Lemma}
\newtheorem{proposition}[theorem]{Proposition}
\newtheorem{corollary}[theorem]{Corollary}
\theoremstyle{definition}
\newtheorem{definition}[theorem]{Definition}
\newtheorem{remark}[theorem]{Remark}
\newtheorem*{notation}{Notation}
\author{Pablo Romero\footnote{Facultad de Ingenier\'ia, Universidad de la Rep\'ublica, Montevideo, Uruguay. E-mail address: \texttt{promero@fing.edu.uy}}\qquad Mart\'\i n D.\ Safe\footnote{Departamento de Matem\'atica, Universidad Nacional del Sur (UNS), Bah\'ia Blanca, Argentina and INMABB, Universidad Nacional del Sur (UNS)-CONICET, Bah\'ia Blanca, Argentina. E-mail address: \texttt{msafe@uns.edu.ar}}}
\date{}
\begin{document}

\title{Nonexistence of uniformly most reliable graphs of least corank}

\maketitle

\begin{abstract}
If $G$ is a simple graph and $\rho\in[0,1]$, the reliability $R_G(\rho)$ is the probability of $G$ being connected after each of its edges is removed independently with probability $\rho$. A simple graph $G$ is a \emph{uniformly most reliable graph} (UMRG) if $R_G(\rho)\geq R_H(\rho)$ for every $\rho\in[0,1]$ and every simple graph $H$ on the same number of vertices and edges as $G$. Boesch [J.\ Graph Theory 10 (1986), 339--352] conjectured that, if $n$ and $m$ are such that there exists a connected simple graph on $n$ vertices and $m$ edges, then there also exists a UMRG on the same number of vertices and edges. Some counterexamples to Boesch's conjecture were given by Kelmans, Myrvold et al., and Brown and Cox. It is known that Boesch's conjecture holds whenever the corank, defined as $c=m-n+1$, is at most $4$ (and the corresponding UMRGs are fully characterized). Ath and Sobel conjectured that Boesch's conjecture holds whenever the corank $c$ is between $5$ and $8$, provided the number of vertices is at least $2c-2$. In this work, we give an infinite family of counterexamples to Boesch's conjecture of corank $5$. These are the first reported counterexamples that attain the minimum possible corank. As a byproduct, the conjecture by Ath and Sobel is disproved.
\end{abstract}

\renewcommand{\labelitemi}{--}

\section{Introduction}\label{section:motivation}

If $G$ is a simple graph, the \emph{reliability of $G$ with failure probability $\rho\in [0,1]$}, denoted $R_G(\rho)$, is the probability of $G$ being connected after each of its edges is removed independently with probability $\rho$. Given integers $n$ and $m$, the question arises as to whether there exists a simple graph $G$ on $n$ vertices and $m$ edges such that the reliability $R_G(\rho)$ is greater than or equal to the reliability $R_H(\rho)$ for every simple graph $H$ on $n$ vertices and $m$ edges and every $\rho\in[0,1]$. Such a simple graph $G$ is called a \emph{uniformly most reliable graph} (UMRG). This concept was introduced in 1986 by Boesch in his seminal article~\cite{1986-Boesch}. 

If all the simple graphs on $n$ vertices and $m$ edges are disconnected, then all of them are UMRGs (for all of them have reliability $0$, irrespective of $\rho$). Thus, we can restrict our attention to the case where the class $\mathcal C_{n,m}$ of connected simple graphs on $n$ vertices and $m$ edges is nonempty. If $\mathcal{C}_{n,m}$ is nonempty, we define the \emph{corank} of $\mathcal C_{n,m}$, and of each of the graphs in $\mathcal C_{n,m}$, as $m-n+1$.

The study of UMRGs with small corank was pioneered by Boesch et al.\ \cite{1991-Boesch}. They observed that all trees, all cycles, and all the so-called balanced $\theta$-graphs (see the definition in Section~\ref{section:basics}) are all the UMRGs having corank equal to $0$, $1$, and $2$, respectively. Moreover, the authors also proved that all UMRGs of corank $3$ are certain subdivisions of the complete graph $K_4$. In the same work, it was conjectured that all UMRGs of corank $4$ are the $4$-wheel and certain subdivisions of the complete bipartite graph $K_{3,3}$. This conjecture was proved by Wang~\cite{1994-Wang}. The existence of UMRGs in classes $\mathcal C_{n,m}$ of corank $5$ is only reported in the literature for $n \in \{5,6,7,8\}$. The corresponding UMRGs are the three graphs depicted in Figure~\ref{figure:primeros4} together with the Wagner graph $W$ depicted in Figure~\ref{figure:cubicosA} and were found by Myrvold~\cite{1990-Myrvold} by exhaustive computer search on all simple graphs on up to $8$ vertices. At the other end of the spectrum, regarding dense graphs, Kelmans~\cite{1981-Kelmans} proved the existence of UMRGs in all the classes $\mathcal C_{n,m}$ such that $n\geq 3$ and $\binom n2-\lfloor\frac n2\rfloor \leq m \leq \binom{n}{2}
$. The corresponding UMRGs arise by removing a matching from the complete graph $K_n$. Recently, Archer et al.~\cite{2019-Archer} established the existence of UMRGs also in the classes $\mathcal C_{n,m}$ for each odd $n\geq 5$ and $m$ either $\binom n2-\frac{n+1}2$ or $\binom n2-\frac{n+3}2$.

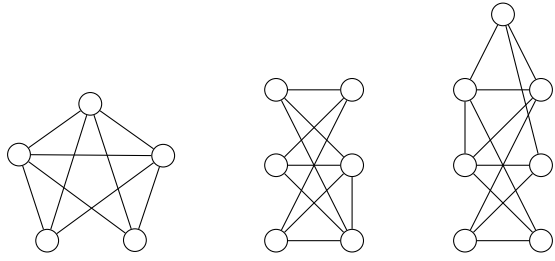
\begin{figure}
\begin{center}
\scalebox{1}{
\begin{tikzpicture}[ scale=1, nodo/.style={circle,draw=black!120,fill=white!120,inner sep=0pt,minimum size=3mm},rotate=18]
     
\node[nodo] (1) at (1,0) {};
\node[nodo] (2) at (0.3,0.95) {};
\node[nodo] (3) at (-0.8,0.6) {};
\node[nodo] (4) at (-0.8,-0.6) {};
\node[nodo] (5) at (0.3,-0.95) {};

\draw (1) to (2);
\draw (1) to (3);
\draw (1) to (4);
\draw (1) to (5);
\draw (2) to (3);
\draw (2) to (4);
\draw (2) to (5);
\draw (3) to (4);
\draw (3) to (5);

\end{tikzpicture} \qquad\quad
\begin{tikzpicture}[ scale=1, nodo/.style={circle,draw=black!120,fill=white!120,inner sep=0pt,minimum size=3mm},rotate=90]
     
\node[nodo] (1) at (-1,0) {};
\node[nodo] (2) at (0,0) {};
\node[nodo] (3) at (1,0) {};
\node[nodo] (4) at (-1,1) {};
\node[nodo] (5) at (0,1) {};
\node[nodo] (6) at (1,1) {};

\draw (1) to (4);
\draw (1) to (5);
\draw (1) to (6);
\draw (2) to (4);
\draw (2) to (5);
\draw (2) to (6);
\draw (3) to (4);
\draw (3) to (5);
\draw (3) to (6);
\draw (1) to (2);

\end{tikzpicture} \qquad\quad
\begin{tikzpicture}[ scale=1, nodo/.style={circle,draw=black!120,fill=white!120,inner sep=0pt,minimum size=3mm},rotate=-90]
     
\node[nodo] (1) at (-1,0) {};
\node[nodo] (2) at (0,0) {};
\node[nodo] (3) at (1,0) {};
\node[nodo] (4) at (-1,1) {};
\node[nodo] (5) at (0,1) {};
\node[nodo] (6) at (1,1) {};
\node[nodo] (7) at (-2,0.5) {};

\draw (1) to (4);
\draw (1) to (7);
\draw (7) to (5);
\draw (7) to (4);
\draw (1) to (6);
\draw (2) to (4);
\draw (2) to (5);
\draw (2) to (6);
\draw (3) to (4);
\draw (3) to (5);
\draw (3) to (6);
\draw (1) to (2);

\end{tikzpicture} }
\end{center}
\caption{Small UMRGs of corank $5$\label{figure:primeros4}} 
\end{figure}

In~\cite{1986-Boesch}, Boesch conjectured that the following is true.

\begin{conjecture}[Boesch \cite{1986-Boesch}]\label{conjecture:boesch}
If $\mathcal C_{n,m}$ is nonempty, then it contains at least one UMRG.
\end{conjecture}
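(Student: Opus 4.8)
Since the purpose of the paper is to refute Conjecture~\ref{conjecture:boesch}, what is needed is not a proof but a construction of counterexamples: infinitely many nonempty classes $\mathcal C_{n,m}$, each of corank $c=m-n+1=5$, containing no UMRG. The plan is to use the two ``boundary'' necessary conditions for a UMRG. Writing the failure probability of $G$ as $\Pr[G\ \text{disconnected}]=\sum_{k\ge\lambda(G)}F_k(G)\,\rho^k(1-\rho)^{m-k}$, where $F_k(G)$ is the number of $k$-edge sets whose deletion disconnects $G$, one sees that as $\rho\to0^+$ a UMRG must have the maximum edge-connectivity $\lambda$ and, among graphs achieving it, the lexicographically smallest sequence $(F_\lambda,F_{\lambda+1},\dots)$; let $\mathcal S_0\subseteq\mathcal C_{n,m}$ be the set of graphs realizing this. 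Dually, writing $R_G(\rho)=\sum_{j=0}^{c}a_j(G)\,(1-\rho)^{n-1+j}\rho^{c-j}$ with $a_0(G)=\tau(G)$ the number of spanning trees and $a_j(G)$ the number of connected spanning subgraphs with $n-1+j$ edges, one sees that as $\rho\to1^-$ a UMRG must have the lexicographically largest sequence $(\tau,a_1,\dots)$; let $\mathcal S_1$ be that set. If $\mathcal S_0\cap\mathcal S_1=\emptyset$ for some admissible $(n,m)$, then $\mathcal C_{n,m}$ has no UMRG, so it suffices to exhibit infinitely many such pairs of corank~$5$.

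The first and harder half is to pin down $\mathcal S_0$ for the target values of $n$. For $n\ge9$ one has $2m=2(n+4)<3n$, so $\lambda(G)\le2$ for every $G\in\mathcal C_{n,m}$, and since a graph with $\lambda=1$ is beaten near $\rho=0$ by any graph with $\lambda=2$, every member of $\mathcal S_0$ is $2$-edge-connected, hence a subdivision of a \emph{core} $C$ --- a $2$-edge-connected multigraph with $\delta(C)\ge3$ and corank~$5$. The plan is then: (i) if $G$ is the subdivision of $C$ in which edge $e$ of $C$ is replaced by a path of length $\ell_e$, show that $F_2(G)=\sum_{e\in E(C)}\binom{\ell_e}{2}+\sum\ell_e\ell_f$, the last sum being over the $2$-edge-cuts $\{e,f\}$ of $C$; (ii) note that $\delta(C)\ge3$ together with $|E(C)|=|V(C)|+4$ forces $|E(C)|\le12$, with equality exactly when $C$ is a cubic graph on $8$ vertices, and that for fixed $\sum_e\ell_e=m$ the first sum in (i) is minimized (by convexity) by a balanced choice of the lengths and becomes smaller as the number of paths grows; conclude that $\min_{\mathcal C_{n,m}}F_2$ is attained exactly by near-balanced subdivisions of $3$-edge-connected cubic $8$-vertex cores; (iii) break the remaining ties by comparing $F_3,F_4,\dots$ --- a finite check, since there are only a handful of cubic $8$-vertex cores and, for $n$ in a fixed residue class modulo~$12$, boundedly many near-balanced length vectors on each --- so as to single out one graph $G_0$, the near-balanced subdivision of one distinguished core $C^\star$, with $\mathcal S_0=\{G_0\}$.

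The second half, showing $G_0\notin\mathcal S_1$, is comparatively routine and rests on the series-reduction form of the matrix--tree theorem: if $G$ is the subdivision of $C$ with path lengths $(\ell_e)_{e\in E(C)}$, then $\tau(G)=\sum_S\prod_{e\in S}\ell_e$, summed over the cotrees $S$ of $C$ (the $5$-edge sets whose deletion leaves a spanning tree). One then checks that the balanced length vector does \emph{not} maximize $\tau(G)$ over all length assignments with $\sum_e\ell_e=m$: if $C^\star$ has more than one edge-orbit, the first-order variation of $\tau$ already points away from the balanced vector --- shifting length from an edge lying in few cotrees to one lying in many strictly increases $\tau$ --- while if $C^\star$ is edge-transitive a second-order (Hessian, i.e.\ eigenvalue) computation shows the balanced vector is a saddle point. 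Either way, some explicit unbalanced subdivision of $C^\star$ has strictly more spanning trees than $G_0$, so $G_0$ is beaten near $\rho=1$ and $G_0\notin\mathcal S_1$. Thus $\mathcal S_0\cap\mathcal S_1=\emptyset$, so $\mathcal C_{n,m}$ has no UMRG; carrying this out uniformly along an infinite arithmetic progression of values of $n$ yields the family of counterexamples, disproving Conjecture~\ref{conjecture:boesch} and, the classes having corank~$5$, also the conjecture of Ath and Sobel.

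The main obstacle is step~(iii): proving, rigorously and uniformly over the infinite family, that the lexicographically least low-$\rho$ coefficient vector over \emph{all} of $\mathcal C_{n,m}$ is attained by a single explicitly identified graph $G_0$. This demands (a) expressing $F_2,F_3,\dots$ of an arbitrary $2$-edge-connected corank-$5$ graph purely in terms of its core and its path lengths, which in particular means enumerating the minimal $3$- and $4$-edge-cuts of a subdivision; (b) settling the finitely many ties among the $3$-edge-connected cubic $8$-vertex cores via these counts; and (c) handling the boundary residue classes of $n$ modulo~$12$ for which the near-balanced length vector is not unique up to $\operatorname{Aut}(C^\star)$. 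Once $G_0$ is fixed, by contrast, the cotree formula renders the verification that $G_0$ is not a spanning-tree maximizer a short self-contained calculation, although it too must be checked for every $n$ in the family.
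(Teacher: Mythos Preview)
Your overall strategy is exactly the paper's: pin down the unique local optimum near $\rho=0$, then exhibit a competitor that beats it near $\rho=1$. The paper works in the residue class $n=12s+4$ (so $m=12s+8$, $r=8=2c-2$), and your step~(iii) is handled not from scratch but by invoking the structural theorems of Bauer et al.\ and Wang (Theorems~\ref{theorem:Bauer} and~\ref{theorem:m3} here), which already say that a min-$\mu_3$ graph in this class is a fair subdivision of a min-$\mu_3$ cubic $8$-vertex core with the four short chains forming a perfect matching; this leaves only five candidates, and a direct $\mu_4$ comparison singles out $G_0=W_s^{M_1}$. For the $\rho\to1$ side, your cotree formula is correct and equivalent to the paper's $\mu_5$ comparison (since $\mu_5=\binom{m}{5}-\tau$ at corank~$5$), but your Hessian heuristic is slightly misaimed: the Wagner graph $W$ is edge-transitive, and the relevant comparison is not ``balanced versus unbalanced'' but between two near-balanced integer assignments with the \emph{same} length multiset $\{s^4,(s{+}1)^8\}$ placed on different edge sets ($M_1$ versus a non-matching set $X$); the paper simply computes $\mu_5^{\mathrm I}$ for both and checks the difference $14s^3+20s^2+5s-1>0$, which is the ``short self-contained calculation'' you anticipate.
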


The characterizations of all the UMRGs up to corank $4$ proved in \cite{1991-Boesch} and \cite{1994-Wang}, discussed above, imply that Boesch's conjecture holds for all the classes $\mathcal C_{n,m}$ having corank at most $4$. Thus, any counterexample to Conjecture~\ref{conjecture:boesch} must have corank at least $5$.

Infinitely many counterexamples to Conjecture~\ref{conjecture:boesch} are known~\cite{1981-Kelmans,1991-Myrvold,2014-Brown}. These counterexamples consist of: (i) the class $\mathcal C_{6,11}$, which has corank $6$; (ii) the class $\mathcal C_{7,15}$ having corank $9$; (iii) the classes $\mathcal C_{8,14}$, $\mathcal C_{8,17}$, $\mathcal C_{8,19}$, $\mathcal C_{8,22}$, and $\mathcal C_{8,23}$, having coranks $7$, $10$, $12$, $15$, and $16$, respectively; and (iv) infinitely many other counterexamples $\mathcal C_{n,m}$ with $n\geq 9$, whose coranks are at least $21$ (and grow asymptotically as $n^2/2$). As a result, all known counterexamples to Conjecture~\ref{conjecture:boesch} have corank at least $6$ and it is currently not known whether there are counterexamples of corank $5$. 

Ath and Sobel~\cite{2000-Ath} proposed the following weaker conjecture in 2000.

\begin{conjecture}[Ath-Sobel~\cite{2000-Ath}]\label{conjecture:ath-sobel}
If a nonempty class $\mathcal C_{n,m}$ has corank $c\in\{5,6,7,8\}$ and $n\geq 2c-2$, then $\mathcal C_{n,m}$ contains at least one UMRG.
\end{conjecture}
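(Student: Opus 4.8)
The plan is to refute Conjecture~\ref{conjecture:ath-sobel} in the strongest possible form: to exhibit an infinite family of classes $\mathcal C_{n,m}$ of corank $5$ (so $m=n+4$), with $n$ unbounded and in particular $n\geq 8=2\cdot5-2$, none of which contains a UMRG. Since corank $5$ is the least corank at which Boesch's conjecture can fail, this also yields the first minimum-corank counterexamples to Conjecture~\ref{conjecture:boesch}. The engine of the argument is the behaviour of $R_G(\rho)$ at the two endpoints of $[0,1]$. Writing $R_G(\rho)=\sum_{k=n-1}^{m}C_k(G)\,(1-\rho)^k\rho^{\,m-k}$ with $C_k(G)$ the number of connected spanning subgraphs of $G$ with $k$ edges (so $C_{n-1}(G)=\tau(G)$ is the number of spanning trees), one reads off: as $\rho\to 1^-$, $R_G(\rho)\sim\tau(G)(1-\rho)^{n-1}$, so a UMRG of $\mathcal C_{n,m}$ must maximise $\tau$; and as $\rho\to 0^+$, $R_G(\rho)=1-b_2(G)\rho^2+O(\rho^3)$ whenever $G$ is $2$-edge-connected, where $b_2(G)$ is the number of $2$-edge-cuts of $G$, while $R_G(\rho)=1-\Theta(\rho)$ if $G$ has a bridge. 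Since $2(n+4)<3n$ for $n\geq 9$, no graph in $\mathcal C_{n,n+4}$ is $3$-edge-connected, so a UMRG of $\mathcal C_{n,n+4}$ must be $2$-edge-connected and, among these, must minimise $b_2$. It therefore suffices to show that for our values of $n$ no graph in $\mathcal C_{n,n+4}$ can be both a $\tau$-maximiser and a $b_2$-minimiser.

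First I would determine the $b_2$-minimisers. A $2$-edge-connected graph is a subdivision of its topological core $H$ (obtained by suppressing the degree-$2$ vertices), and when $H$ is $3$-edge-connected the $2$-edge-cuts of the subdivision are exactly the pairs of edges lying on a common subdivided path, so $b_2=\sum_{e\in E(H)}\binom{\ell_e}{2}$, where $\ell_e$ is the length of the path replacing $e$ and $\sum_e\ell_e=m$; if $H$ is only $2$-edge-connected the count is strictly larger. A corank-$5$ core with minimum degree at least $3$ has $|E(H)|=|V(H)|+4$ and $2|E(H)|\geq 3|V(H)|$, hence $|V(H)|\leq 8$, with equality forcing $H$ cubic. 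Since more core edges make the profile $(\ell_e)$ shorter and flatter, strict convexity of $x\mapsto\binom x2$ shows $b_2$ is minimised exactly by the as-balanced-as-possible subdivisions of a $3$-edge-connected cubic graph on $8$ vertices; restricting to $n\equiv 8\pmod{12}$ makes each minimiser the uniform subdivision with all $\ell_e=(n+4)/12$.

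Next I would show that no such uniform subdivision maximises $\tau$. By repeated deletion--contraction, $\tau$ of a subdivision is the multilinear function $\tau=\sum_{S\subseteq E(H)}\bigl(\prod_{e\in S}(\ell_e-1)\bigr)\,\tau(H\setminus S)$ of the lengths, whose top-degree part (degree equal to the corank, $5$) is $\sum_{T}\prod_{e\notin T}(\ell_e-1)$, summed over the spanning trees $T$ of $H$, a polynomial with $\tau(H)$ terms. Among cubic graphs on $8$ vertices the Wagner graph $W$ maximises $\tau$, with $\tau(W)=392>384=\tau(Q_3)$ (consistently with Myrvold's finding that $W$ is the corank-$5$ UMRG on $8$ vertices), so for large $n$ the only uniform cubic-core subdivision that could be a $\tau$-maximiser is the uniform subdivision of $W$. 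But $W$ is vertex-transitive and \emph{not} edge-transitive: its $12$ edges split into the $8$ edges of a spanning cycle and the $4$ diagonals joining antipodal vertices. Writing the leading term with length $a$ on the cycle edges and $b$ on the diagonals, $8a+4b=n+4$, and differentiating along that line, one finds that $a=b$ is a critical point only if $24\,\tau(W-e)=10\,\tau(W)$ for a cycle edge $e$ — impossible, since $\tau(W-e)$ is an integer while $\tfrac{10}{24}\tau(W)=\tfrac{3920}{24}$ is not. Hence, for all large $n\equiv 8\pmod{12}$, the uniform subdivision of $W$ is beaten in spanning-tree count by a suitable slightly unbalanced subdivision of $W$, so no $b_2$-minimiser of $\mathcal C_{n,n+4}$ is a $\tau$-maximiser. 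By the dichotomy of the first paragraph, $\mathcal C_{n,n+4}$ then contains no UMRG, refuting Conjectures~\ref{conjecture:ath-sobel} and, at corank $5$, \ref{conjecture:boesch}.

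The main obstacle is the spanning-tree analysis, and it must be made uniform in $n$: one must control $\tau$ of the competing subdivisions sharply enough to (a) justify that the global $\tau$-maximiser of $\mathcal C_{n,n+4}$ lives on $W$ rather than on $Q_3$, the remaining cubic cores, or the sparser corank-$5$ cores $K_5-e$, $K_{3,3}+e$, the balanced $\theta$-graph on six paths, and so on — an inequality between spanning-tree counts of genuinely different families that has to survive all lower-order corrections — and (b) locate the optimal cycle/diagonal length ratio and keep it bounded away from $1$ as $n$ grows. Both parts are best handled through the multilinear (equivalently matrix-tree) formula above together with deletion--contraction estimates; the delicate points are extracting usable closed forms, coping with the integrality of $a$ and $b$, and, as a minor loose end, verifying that when $\tau$-maximisers or $b_2$-minimisers are not unique every one of them still fails the complementary optimality requirement, and that ties at the $\rho^2$ level near $\rho=0$ need not be resolved at the $\rho^3$ level.
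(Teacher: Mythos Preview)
Your plan is viable and would refute the conjecture, but it follows a genuinely different route from the paper. The paper works in the residue class $n=12s+4$ (so $m\equiv 8\pmod{12}$, with eight chains of length $s{+}1$ and four of length $s$) rather than your uniform case $n\equiv 8\pmod{12}$. Instead of stopping at $\mu_2$, it invokes Wang's min-$\mu_3$ theorem---whose perfect-matching condition (iii) bites precisely because the chain lengths are unequal---and then a $\mu_4$ comparison to isolate a \emph{single} graph $W_s^{M_1}$ as the unique local optimiser near $\rho=0$. Having only one candidate to defeat, the paper produces an explicit competitor $W_s^X$ (four specified chains shortened, not grouped as cycle-versus-diagonal) and computes $\mu_5^{\mathrm I}$ for both by classifying all $5$-edge-cuts of $W$; the resulting polynomial inequality $\mu_5^{\mathrm I}(W_s^{M_1})-\mu_5^{\mathrm I}(W_s^X)=14s^3+20s^2+5s-1>0$ holds for every $s\ge 1$, not just asymptotically. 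Your approach trades that explicit bookkeeping for a clean directional-derivative argument on $\tau$, but at the cost of delivering the conclusion only for sufficiently large $n$.

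You are also making the task harder than it needs to be. You do \emph{not} need obstacle (a), locating the global $\tau$-maximiser over all corank-$5$ cores, nor obstacle (b), pinning down the optimal cycle/diagonal ratio. Since a UMRG must simultaneously be a $b_2$-minimiser and a $\tau$-maximiser, it suffices to show that each $b_2$-minimiser is beaten in $\tau$ by \emph{something}. For the uniform $\ell$-subdivision of any $3$-edge-connected cubic $H\neq W$ on eight vertices, the uniform subdivision of $W$ already wins, because $\tau(\text{unif.\ subdiv of }H)=\tau(H)\,\ell^{5}$ exactly and $\tau(W)=392$ is strictly maximal among those five cubic graphs (a finite check, implicit in the fact you cite that $W$ is the UMRG on eight vertices and twelve edges). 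For the uniform subdivision of $W$ itself, your derivative computation at $a=b=\ell$ is correct; since $\tau(a,b)$ is a degree-$5$ polynomial, the integer step $(a,b)\mapsto(\ell\pm 1,\ell\mp 2)$ changes $\tau$ by the nonzero directional derivative of order $\ell^4$ plus a remainder of order $\ell^3$, so one of the two steps strictly increases $\tau$ for all large $\ell$. With (a) and (b) dropped, what remains is routine.
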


Furthermore, explicit candidates for such UMRGs were also proposed in~\cite{2000-Ath}. 

Our main result is the nonexistence of UMRGs in the classes $\mathcal C_{n,m}$ such that $n=12s+4$ and having corank $5$ (i.e., $m=12s+8$), for every positive integer $s$. Thereby, we provide an infinite family of counterexamples to Conjecture~\ref{conjecture:boesch} of corank $5$. These are the first known counterexamples which attain the minimum possible corank. As a byproduct, the weaker Conjecture~\ref{conjecture:ath-sobel} proposed by Ath and Sobel is disproved.

Our proof strategy consists of two steps. First, for each positive integer $s$, we determine a simple graph $G$ (depending on $s$) in the class $\mathcal C_{12s+4,12s+8}$ whose reliability is strictly greater than that of all other graphs in that same class whenever $\rho \in (0,\epsilon)$, for some $\epsilon>0$. Second, we find a simple graph in $\mathcal C_{12s+4,12s+8}$ whose reliability is strictly greater than that of $G$ whenever $\rho \in (1-\delta,1)$, for some $\delta>0$. Since a UMRG must attain the greatest possible reliability for each $\rho \in [0,1]$, the nonexistence of a UMRG in the class $\mathcal C_{12s+4,12s+8}$ follows.

This article is organized as follows. Section~\ref{section:basics} presents basic graph-theoretic terminology. Section~\ref{section:background} discusses in greater detail the concept of uniformly most reliable graphs and the previously known results. Some preliminary results are proved in Section~\ref{section:preliminary}. In Section~\ref{section:lmrg}, we identify the locally most reliable graph near $\rho=0$ in the class $\mathcal C_{12s+4,12s+8}$, for each positive integer $s$. The main result is given in Section~\ref{section:main}. 

\section{Basic definitions}\label{section:basics}

This section introduces the basic definitions. More specific definitions will be given throughout the article. If $c$ is a positive integer, $[c]$ denotes the set $\{1,2,\ldots,c\}$. We denote the set of positive integers by $\mathbb Z^+$. Let $S$ be a finite set. The cardinality of $S$ is denoted by $\vert S\vert$. We also refer to the cardinality of a set as its {size}. If $k$ is a nonnegative integer, then the family of all the subsets of $S$ with cardinality $k$ is denoted by $\binom{S}{k}$.

All the graphs in this work are finite and undirected. We denote the vertex set and the edge set of a graph $G$ by $V(G)$ and $E(G)$, respectively. A graph is \emph{simple} if it has no parallel edges nor loops. Let $G$ be a graph. If $S\subseteq V(G)$, we denote by $G-S$ the graph that arises from $G$ by removing all vertices in $S$ and by $G[S]$ the \emph{subgraph of $G$ induced by $S$} (i.e.\ the graph $G-(V(G)-S)$). If $F\subseteq E(G)$, $G-F$ denotes the graph with vertex set $V(G)$ and edge set $E(G)-F$. We denote the chordless path, the chordless cycle, and the complete graph on $n$ vertices by $P_n$, $C_n$, and $K_n$, respectively. The \emph{4-wheel} is the simple graph that arises from $C_4$ by adding one vertex adjacent to every other vertex. The \emph{Wagner graph} $W$ and the \emph{cube} $Q$ are depicted in Figure~\ref{figure:cubicosA}. A \emph{$\theta$-graph} is a simple graph consisting precisely of three paths having the same two endpoints, whose set of internal vertices are pairwise disjoint, and such that the subgraph induced by the vertices of each two of these paths induces a chordless cycle. If the lengths of each two of these three paths differ by at most $1$, the graph is called a \emph{balanced $\theta$-graph}.

\begin{figure}
\begin{center}
\scalebox{0.8}{
\begin{tabular}{c}
\begin{tikzpicture}[ scale=1, nodo/.style={circle,draw=black!120,fill=white!120,inner sep=0pt,minimum size=5mm}]

\node[nodo] (1) at (0,2) {$1$};
\node[nodo] (2) at (1.4142,1.4142) {$2$};
\node[nodo] (3) at (2,0) {$3$};
\node[nodo] (4) at (1.4142,-1.4142) {$4$};
\node[nodo] (5) at (0,-2) {$5$};
\node[nodo] (6) at (-1.4142,-1.4142) {$6$};
\node[nodo] (7) at (-2,0) {$7$};
\node[nodo] (8) at (-1.4142,1.4142) {$8$};

\draw (1) to (2);
\draw (2) to (3);
\draw (3) to (4);
\draw (4) to (5);
\draw (5) to (6);
\draw (6) to (7);
\draw (7) to (8);
\draw (8) to (1);
\draw (1) to (5);
\draw (2) to (6);
\draw (3) to (7);
\draw (4) to (8);

\end{tikzpicture} %
\\
$W$
\end{tabular}
\qquad\hfill\qquad
\begin{tabular}{c}
\begin{tikzpicture}[ scale=1, nodo/.style={circle,draw=black!120,fill=white!120,inner sep=0pt,minimum size=5mm}]

\node[nodo] (1) at (0,2) {$1$};
\node[nodo] (2) at (1.4142,1.4142) {$2$};
\node[nodo] (3) at (2,0) {$3$};
\node[nodo] (4) at (1.4142,-1.4142) {$4$};
\node[nodo] (5) at (0,-2) {$5$};
\node[nodo] (6) at (-1.4142,-1.4142) {$6$};
\node[nodo] (7) at (-2,0) {$7$};
\node[nodo] (8) at (-1.4142,1.4142) {$8$};

\draw (1) to (2);
\draw (2) to (3);
\draw (3) to (4);
\draw (4) to (5);
\draw (5) to (6);
\draw (6) to (7);
\draw (7) to (8);
\draw (8) to (1);
\draw (1) to (6);
\draw (2) to (5);
\draw (3) to (8);
\draw (4) to (7);
\end{tikzpicture} %
\\
$Q$
\end{tabular}
}
\end{center}
\caption{The Wagner graph $W$ and the cube $Q$. \label{figure:cubicosA}} 
\end{figure}
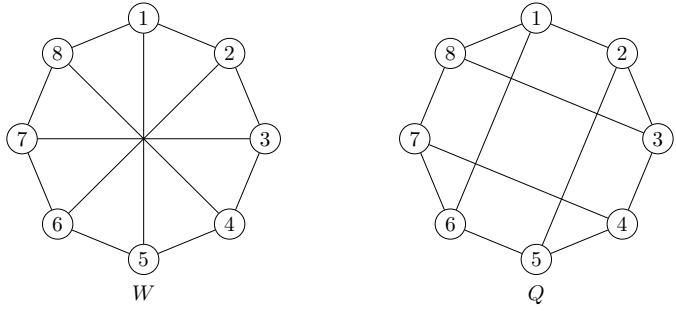

Let $G$ be a graph. An \emph{edge-cut} of $G$ is a set $F$ of edges of $G$ such that $G-F$ is disconnected. A \emph{$k$-edge-cut} is an edge-cut of size $k$. The \emph{edge-connectivity} of a graph $G\neq K_1$, denoted by $\lambda{(G)}$, is the minimum $k$ such that $G$ has a $k$-edge-cut. 

Let $F$ be an edge-cut of $G$. We say $F$ \emph{separates} a set $S$ of vertices $G$ if $F$ contains all edges with precisely one endpoint in $S$ and no edge with both endpoints in $S$. Notice that $F$ may also contain some edges with no endpoints in $S$. If $S=\{v\}$ for some vertex $v$ of $G$, we say that \emph{$F$ separates $v$} and that $F$ is \emph{vertex-separating}. If $S=\{u,v\}$ where $u$ and $v$ are the endpoints of an edge $e$ of $G$, we say that \emph{$F$ separates $e$} and that $F$ is \emph{edge-separating}. An edge-cut is \emph{nontrivial} if it is neither vertex-separating nor edge-separating. If $S$ induces a graph $H$ in $G$, we say $F$ is \emph{$H$-separating}.

Let $G$ be a graph with no loops. An edge $e$ is \emph{incident} to a vertex $v$ if $v$ is an endpoint of $e$. The \emph{degree} $d_G(v)$ of a vertex $v$ of $G$ is the number of edges incident to it. The minimum degree among the vertices of $G$ is denoted by $\delta(G)$. We say $G$ is \emph{cubic} if all its vertices have degree $3$. Two edges are \emph{nonincident}, \emph{incident}, or \emph{parallel} if they share precisely 0, 1, or 2 endpoints, respectively. A \emph{matching} is a set of pairwise nonincident edges. A matching $M$ of $G$ is \emph{perfect} if every vertex of $G$ is an endpoint of some edge in $M$. 

By \emph{subdividing $k$ times} an edge with endpoints $x$ and $y$, we mean replacing the edge by $k+1$ edges $xz_1,z_1z_2,\ldots,z_{k-1}z_k,z_ky$, where $z_1,z_2,\ldots,z_k$ are $k$ new vertices of degree $2$ each.

A simple graph $G$ is \emph{2-connected} if it has at least $3$ vertices, it is connected, and $G-v$ is connected for all $v$ in $V(G)$. Let $G$ be a $2$-connected simple graph having more edges than vertices. A \emph{chain $\gamma$ of $G$} is the edge set of a path $P$ in $G$, where all internal vertices of $P$ (if any) have degree $2$ in $G$ and $P$ has two distinct endpoints of degree greater than $2$ in $G$ each. The \emph{endpoints} of $\gamma$ are those of $P$ and $\gamma$ is \emph{incident} to a vertex $v$ if $v$ is one of its endpoints. The \emph{internal vertices} of $\gamma$ are those of $P$. By \emph{removing} $\gamma$ from $G$, we mean removing the edges and internal vertices of $\gamma$ (but not its endpoints). The graph that results by removing $\gamma$ from $G$ is denoted by $G\ominus\gamma$. If $H$ is a set of chains of $G$, we denote by $G\ominus H$ the graph that arises from $G$ by removing all the chains in $H$. By \emph{collapsing $\gamma$} we mean removing $\gamma$ and adding an edge with the same endpoints as $\gamma$. The \emph{length} of $\gamma$, denoted $\ell(\gamma)$, is the size $\vert\gamma\vert$. We denote by $\Gamma(G)$ the set of all chains of $G$. The \emph{distillation of $G$}, denoted $D(G)$, is the graph that arises from $G$ by collapsing all of its chains. Clearly, $G$ arises from $D(G)$ by a sequence of (possibly zero) subdivisions. Notice that every edge of $G$ belongs to precisely one chain and that $D(G)$ may have parallel edges but no loops (recall we are assuming $G$ is $2$-connected, simple, and has more edges than vertices). Two chains are \emph{nonincident}, \emph{incident}, or \emph{parallel} if they share precisely $0$, $1$, or $2$ endpoints, respectively. A \emph{matching of chains of $G$} is a set of pairwise nonincident chains of $G$. A \emph{perfect matching of chains} of $G$ is a matching of chains of $G$ whose endpoints are precisely all the vertices of $G$ having degree greater than $2$ in $G$. 

\section{Background}\label{section:background}

In this section, we present some previously known results regarding uniformly most reliable graphs. Along with this, some related concepts are discussed.

A simple graph $G$ is \emph{more reliable near $\rho=0$} (respectively, \emph{near $\rho=1$}) than a simple graph $H$ if there exists $\epsilon>0$ such that $R_G(\rho) > R_H(\rho)$ for all $\rho \in (0,\epsilon)$ (respectively, for all $\rho\in (1-\epsilon,1)$). A simple graph $G$ is \emph{locally most reliable near $\rho=0$} (respectively, $\rho=1$) if it is more reliable near $\rho=0$ (respectively, $\rho=1$) than all other simple graphs on the same number of vertices and edges as $G$. Notice that if there exists a locally most reliable graph near $\rho=0$ (respectively, $\rho=1$) on $n$ vertices and $m$ edges, then it is necessarily unique.

Let $G$ be a simple graph on $n$ vertices and $m$ edges. We denote by $\mu_k(G)$ the number of $k$-edge-cuts in $G$. Clearly, for each $\rho \in [0,1]$,
\[ R_G(\rho) = 1-\sum_{k=0}^{m}\mu_k(G)\rho^k(1-\rho)^{m-k}. \]
By comparing the values of the above polynomial expression for two different graphs, as $\rho$ approaches $0$ or $1$, the following can be proved.
\begin{theorem}[Brown and Cox~\cite{2014-Brown}]\label{theorem:optlocal}
Let $G$ and $H$ be simple graphs on $n$ vertices and $m$ edges.
\begin{enumerate}[label=(\roman*)]
    \item\label{it1:optlocal} If there exists $i\in \{0,1,\ldots,m\}$ such that $\mu_{k}(G)=\mu_k(H)$ for all $k\in\{0,1,\ldots,i-1\}$ and $\mu_i(G)<\mu_i(H)$, then $G$ is more reliable than $H$ near $\rho=0$.
    \item\label{it2:optlocal} If there exists $j\in \{0,1,\ldots,m\}$ such that $\mu_k(G)=m_k(H)$ for all $k\in\{j+1,j+2,\ldots,m\}$ and $\mu_j(G)<\mu_j(H)$, then $G$ is more reliable than $H$ near $\rho=1$.
\end{enumerate}
\end{theorem}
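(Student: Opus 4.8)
The plan is to work directly with the explicit expression for the reliability polynomial displayed just before the statement. Subtracting the two polynomials, for every $\rho\in[0,1]$ we have
\[
R_G(\rho)-R_H(\rho)=\sum_{k=0}^{m}\bigl(\mu_k(H)-\mu_k(G)\bigr)\,\rho^k(1-\rho)^{m-k},
\]
so it suffices to show that, under the stated hypotheses, the right-hand side is strictly positive for $\rho$ in a suitable one-sided neighbourhood of $0$ (for part~\ref{it1:optlocal}) or of $1$ (for part~\ref{it2:optlocal}).

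For part~\ref{it1:optlocal}, I would use the hypothesis $\mu_k(H)-\mu_k(G)=0$ for all $k<i$ to rewrite the difference as $\rho^i f(\rho)$, where $f(\rho)=\sum_{k=i}^{m}\bigl(\mu_k(H)-\mu_k(G)\bigr)\rho^{k-i}(1-\rho)^{m-k}$ is a polynomial, hence continuous on $[0,1]$. Evaluating at $\rho=0$, every term with $k>i$ vanishes and the term $k=i$ contributes $\mu_i(H)-\mu_i(G)$, which is strictly positive by hypothesis; so $f(0)>0$. By continuity there is $\epsilon>0$ with $f(\rho)>0$ on $[0,\epsilon)$, and since $\rho^i>0$ for $\rho\in(0,\epsilon)$ (also when $i=0$, where $\rho^i=1$), we conclude $R_G(\rho)-R_H(\rho)>0$ on $(0,\epsilon)$, i.e.\ $G$ is more reliable than $H$ near $\rho=0$.

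For part~\ref{it2:optlocal}, I would argue symmetrically by factoring out a power of $1-\rho$ instead: the hypothesis $\mu_k(H)-\mu_k(G)=0$ for all $k>j$ lets us write the difference as $(1-\rho)^{m-j}g(\rho)$ with $g(\rho)=\sum_{k=0}^{j}\bigl(\mu_k(H)-\mu_k(G)\bigr)\rho^{k}(1-\rho)^{j-k}$, again a polynomial and hence continuous on $[0,1]$. At $\rho=1$ every term with $k<j$ vanishes and the term $k=j$ equals $\mu_j(H)-\mu_j(G)>0$, so $g(1)>0$ and $g$ stays positive on some interval $(1-\delta,1]$; multiplying by $(1-\rho)^{m-j}>0$ for $\rho\in(1-\delta,1)$ gives $R_G(\rho)>R_H(\rho)$ there.

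I do not expect a real obstacle: the argument is a continuity-plus-leading-term computation, and the only point requiring care is the bookkeeping of which monomials $\rho^k(1-\rho)^{m-k}$ survive at the relevant endpoint and that the surviving coefficient has the correct sign. The degenerate cases $i=0$ and $j=m$ are handled by the same formulas; one may remark in passing that $j=m$ cannot in fact occur when $n\ge 2$, since then $E(G)$ is itself an edge-cut of both graphs and $\mu_m(G)=\mu_m(H)=1$, contradicting the strict inequality.
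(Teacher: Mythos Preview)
Your argument is correct and is precisely the approach the paper points to: the paper does not give a detailed proof but only remarks that the theorem follows ``by comparing the values of the above polynomial expression for two different graphs, as $\rho$ approaches $0$ or $1$,'' which is exactly your factor-out-the-leading-power-and-use-continuity computation. Nothing further is needed.
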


Let $G$ be a simple graph on $n$ vertices and $m$ edges. If $k\in\{0,1,\ldots,m\}$, we say $G$ is \emph{min-$\mu_k$} if $\mu_k(G) \leq \mu_k(H)$ for all other simple graphs $H$ on the same number of vertices and edges as $G$.

\begin{theorem}[Wang \cite{1994-Wang}]\label{theorem:strong}
Let $G$ be a simple graph on $n$ vertices and $m$ edges such that $m>n$. If $G$ is min-$\mu_k$ for some $k\in \{\lambda(G),\lambda(G)+1,\ldots,m-n+1\}$, then $G$ is 2-connected.
\end{theorem}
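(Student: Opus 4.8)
The plan is to argue by contradiction: assume $G$ is min-$\mu_k$ for some $k\in\{\lambda(G),\ldots,m-n+1\}$ yet $G$ is not $2$-connected. Write $c=m-n+1$ for the corank. Since $m>n$ forces $n\geq 3$ and $m\leq\binom n2$, the class $\mathcal C_{n,m}$ is nonempty and contains connected graphs, necessarily of corank $c\geq 2$. Also, picking any $\lambda(G)$-edge-cut and adding $k-\lambda(G)$ further edges to it produces a $k$-edge-cut, so the hypothesis $k\geq\lambda(G)$ guarantees $\mu_k(G)\geq 1$, i.e.\ there is genuinely something to improve.

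First I would dispose of the case that $G$ is disconnected. Then $G-F$ is disconnected for \emph{every} $F\subseteq E(G)$, so $\mu_j(G)=\binom mj$ for all $j$. Choose any connected $H\in\mathcal C_{n,m}$: a spanning tree of $H$ leaves $c\geq k$ non-tree edges, and deleting any $k$ of them keeps $H$ connected, so $\mu_k(H)<\binom mk=\mu_k(G)$, contradicting that $G$ is min-$\mu_k$.

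So $G$ is connected with a cut vertex. I would pass to the block--cut-vertex tree and fix a leaf block $B$ meeting the rest of $G$ only at its unique cut vertex $v$; let $A$ be the subgraph of $G$ induced by $(V(G)\setminus V(B))\cup\{v\}$, so that $E(G)=E(A)\sqcup E(B)$, $V(A)\cap V(B)=\{v\}$, and $\vert V(A)\vert,\vert V(B)\vert\geq 2$. Since $v$ separates $A$ from $B$, for $F=F_A\sqcup F_B$ with $F_A\subseteq E(A)$, $F_B\subseteq E(B)$ the graph $G-F$ is connected if and only if both $A-F_A$ and $B-F_B$ are connected, which yields the convolution identity
\[
\mu_k(G)=\sum_{i+j=k}\Bigl(\mu_i(A)\tbinom{\vert E(B)\vert}{j}+\tbinom{\vert E(A)\vert}{i}\mu_j(B)-\mu_i(A)\mu_j(B)\Bigr),
\]
and one computes $\operatorname{corank}(A)+\operatorname{corank}(B)=c\geq 2$, so at least one of the two sides contains a cycle and hence an edge that can be re-routed without disconnecting that side. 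The core of the argument is to produce $H\in\mathcal C_{n,m}$ by sliding such a spare edge so as to create a \emph{second} link between the $A$-part and the $B$-part of $G$ -- destroying $v$ as a cut vertex -- and then to use the analogue of the displayed identity for $H$ (iterating the reduction if $A$ itself is not $2$-connected) to show that $\mu_j(H)\leq\mu_j(G)$ for every $j\leq c$ with strict inequality at level $k$. This contradicts min-$\mu_k$, so $G$ must be $2$-connected.

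The delicate point, and where the real work lies, is making that last comparison uniform in $k$ and immune to small-case pathologies: the naive edge-slide can create a vertex of degree $1$ or $2$ in $H$ (for instance when $B$ is a pendant edge, a triangle, or when $v$ has degree $2$ on one side), which would inflate $\mu_1(H)$ or $\mu_2(H)$ and could sink the comparison precisely when $k=\lambda(G)$ is small. I would handle this by singling out the pendant-edge sub-case -- where, because $m>n$, a spare cycle-edge of $A$ can be rerouted (e.g.\ via subdivision) so that the pendant vertex is absorbed into a cycle -- and, in the remaining sub-cases, choosing the reattachment position suitably, if necessary by averaging $\mu_k$ over all admissible positions: since the cut-vertex junction behaves ``in series'', that average lies strictly below $\mu_k(G)$, so some position furnishes the required $H$. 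The bound $k\leq c$ is exactly what guarantees enough cyclic room to perform the reroute, while $k\geq\lambda(G)$ rules out the trivial situation (such as a tree, or more generally $\mu_k(G)=0$) in which no improvement is possible.
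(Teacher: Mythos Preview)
The paper does not contain a proof of this theorem: it is stated in Section~3 (``Background'') as a result of Wang~\cite{1994-Wang} and is used as a black box later on (in the proof of Lemma~\ref{lemma:five-m3}). So there is no ``paper's own proof'' to compare against; any assessment here is of your sketch on its own merits.

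As a sketch, the disconnected case is fine and essentially complete. The cut-vertex case, however, is a plan rather than a proof. You correctly set up the block decomposition and the convolution for $\mu_k$, and you correctly identify that the heart of the matter is to build an explicit $H\in\mathcal C_{n,m}$ with $\mu_k(H)<\mu_k(G)$ by ``sliding'' a spare cycle edge across the cut vertex. But that construction is never actually carried out: you do not specify which edge moves, to which endpoints, nor do you verify that the resulting graph is simple and still in $\mathcal C_{n,m}$; and the crucial inequality $\mu_k(H)<\mu_k(G)$ is asserted, not proved. The proposed fallback---``averaging $\mu_k$ over all admissible positions'' and claiming the average lies strictly below $\mu_k(G)$---is not justified (why should the cut-vertex junction behaving ``in series'' force a strict drop in the average for the particular $k$ at hand?), and it is precisely at the small cases you flag (pendant edges, degree-$2$ attachments, $k=\lambda(G)$) that a careless reroute can fail. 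In short, the outline is reasonable and the reductions are in the right direction, but the step that carries all the weight is missing; to turn this into a proof you would need either an explicit reroute with a direct $\mu_k$-comparison, or a clean monotonicity lemma showing that merging two blocks at a cut vertex into a $2$-connected piece never increases $\mu_k$ for $k\le c$ and strictly decreases it for $k\ge\lambda(G)$.
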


\begin{theorem}[Bauer et al.\ \cite{1985-Bauer}]\label{theorem:Bauer}
Let $G$ be a $2$-connected simple graph on $n$ vertices and $m$ edges such that $n+2\leq m \leq 3n/2$. Let $c$ be the corank of $G$ and let $r$ and $s$ be the unique integers such that $m=(3c-3)s+r$ and $r\in \{0,1,\ldots,3c-4\}$. Then, $G$ is min-$\mu_2$ if and only if the following two assertions hold: 
\begin{enumerate}[label=(\roman*)]
\item $D(G)$ is a simple, cubic, has $2c-2$ vertices, and $\lambda(D(G))=3$;
\item $G$ has $r$ chains of length $s+1$ and $3c-3-r$ chains of length $s$.
\end{enumerate}
\end{theorem}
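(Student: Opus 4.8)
The plan is to rewrite $\mu_2(G)$ in terms of the distillation $D:=D(G)$ and the chain lengths, and then minimize the two contributions that appear separately. \textbf{Reduction to the distillation.} Since $G$ is $2$-connected with $m>n$, every edge of $G$ lies in exactly one chain, and collapsing a chain preserves the corank, so $D$ is a connected loopless multigraph with $\vert V(D)\vert$ vertices, exactly one edge per chain of $G$ (hence $\vert E(D)\vert=\vert V(D)\vert+c-1$), and minimum degree $\geq 3$. I would classify the $2$-edge-cuts $\{e_1,e_2\}$ of $G$ according to whether the two edges lie in a common chain. If $e_1,e_2$ lie in one chain $\gamma$ then $\{e_1,e_2\}$ is always an edge-cut, and conversely every pair of distinct edges of a chain of length $\geq 2$ is one, for a total contribution of $\binom{\ell(\gamma)}{2}$. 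If $e_1$ lies in a chain $\gamma_1$ and $e_2$ in a distinct chain $\gamma_2$, then $G-\{e_1,e_2\}$ is disconnected if and only if $D$ with the two edges corresponding to $\gamma_1$ and $\gamma_2$ removed is disconnected, both implications following by tracking which endpoints of $\gamma_1$ and $\gamma_2$ remain joined through the untouched chains. Summing, this yields
\[
\mu_2(G)=\sum_{\gamma\in\Gamma(G)}\binom{\ell(\gamma)}{2}+\sum_{\{f_1,f_2\}}\ell(f_1)\,\ell(f_2),
\]
where the second sum is over the $2$-edge-cuts $\{f_1,f_2\}$ of $D$ and $\ell(f)$ is the length of the chain corresponding to $f$.

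\textbf{Minimizing each sum.} The second sum is a sum of positive integers, hence $\geq 0$, with equality precisely when $D$ has no $2$-edge-cut, i.e.\ $\lambda(D)\geq 3$. For the first sum, minimum degree $\geq 3$ gives $3\vert V(D)\vert\leq 2\vert E(D)\vert=2\vert V(D)\vert+2c-2$, so $\vert V(D)\vert\leq 2c-2$ (equality iff $D$ is cubic) and therefore $G$ has at most $3c-3$ chains. Because $x\mapsto\binom{x}{2}$ is strictly convex, and strictly decreases when a part of length $\ell\geq 2$ is split into parts of lengths $\ell-1$ and $1$ — a move available until there are $3c-3$ parts since $3c-3\leq m$ (which uses $n\geq 2c-2$) — the first sum is at least its value for $3c-3$ chains whose lengths are as equal as possible, namely $r\binom{s+1}{2}+(3c-3-r)\binom{s}{2}$, with equality iff $D$ is cubic and the chain lengths are balanced as in (ii). Hence $\mu_2(G)\geq N(c,m):=r\binom{s+1}{2}+(3c-3-r)\binom{s}{2}$, with equality iff $D$ is cubic, $\lambda(D)=3$, and (ii) holds; and a cubic graph on $\geq 4$ vertices with $\lambda=3$ is automatically simple (a pair of parallel edges would force the remaining two edges at their endpoints to be a $2$-edge-cut), so this equality condition is precisely (i) together with (ii).

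\textbf{Conclusion.} To finish I would exhibit one graph attaining $N(c,m)$: take any $3$-edge-connected cubic graph on $2c-2\geq 4$ vertices — such graphs exist for every even order $\geq 4$ (e.g.\ $K_4$, prisms, \dots) — and subdivide its edges to produce $r$ chains of length $s+1$ and $3c-3-r$ of length $s$; this graph is simple, $2$-connected, lies in $\mathcal C_{n,m}$, and has $\mu_2=N(c,m)$. Consequently every min-$\mu_2$ graph in $\mathcal C_{n,m}$ has $\mu_2=N(c,m)$, which by the previous paragraph is equivalent to (i) and (ii); the converse — any graph satisfying (i) and (ii) is min-$\mu_2$ — is immediate from the same lower bound.

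\textbf{Main obstacle.} I expect the delicate part to be the reduction step: establishing the "if and only if" for cross-chain pairs and correctly handling the degenerate cases (chains of length $1$, and whether $D$ may a priori have parallel edges or degree-$2$ vertices), after which the minimization is a routine convexity argument and the existence of $3$-edge-connected cubic graphs of the prescribed order is standard.
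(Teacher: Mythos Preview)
The paper does not prove this theorem; it is quoted as background from Bauer, Boesch, Suffel, and Tindell~\cite{1985-Bauer} and used later without reproof. So there is no in-paper argument to compare against, only the original reference.

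Your sketch is essentially the standard argument and is sound in its main steps: the decomposition $\mu_2(G)=\sum_{\gamma}\binom{\ell(\gamma)}{2}+\sum_{\{f_1,f_2\}\text{ a 2-cut of }D}\ell(f_1)\ell(f_2)$ is correct (it is the $k=2$ case of the paper's Lemma~\ref{lemma:objetivo}, rewritten), the convexity/splitting argument correctly forces $t=3c-3$ balanced chains, and the observation that a cubic graph with $\lambda=3$ on at least four vertices is automatically simple is right. The existence of $3$-edge-connected cubic graphs on every even order $\geq 4$ is indeed standard.

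There is one genuine gap you should close. Your lower bound $\mu_2(G)\geq N(c,m)$ is derived only for \emph{2-connected} $G$, because the distillation and chain decomposition require that hypothesis. But ``min-$\mu_2$'' in this paper means minimal among \emph{all} simple graphs on $n$ vertices and $m$ edges, so for the ``if'' direction you must also rule out non-2-connected competitors. The cleanest fix is to invoke Theorem~\ref{theorem:strong} (Wang): since $m\leq 3n/2$ forces $\delta\leq 3$, and in fact $\lambda(H)\leq 2$ whenever $m<3n/2$ (while the boundary case $m=3n/2$ gives $N(c,m)=0$ and the only graphs with $\mu_2=0$ are the $3$-edge-connected cubic ones, which are automatically $3$-connected), any min-$\mu_2$ graph is $2$-connected, and then your lower bound applies. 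Alternatively one can argue directly that a bridge contributes enough extra $2$-cuts, but the appeal to Theorem~\ref{theorem:strong} is shorter. With that addition your proof is complete.
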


\begin{theorem}[Wang \cite{1997-Wang}]\label{theorem:m3}
Let $G$ be a $2$-connected simple graph on $n$ vertices and $m$ edges such that $n+2\leq m\leq 3n/2$. Let $c$ be the corank of $G$ and let $r$ and $s$ be the unique integers such that $m=(3c-3)s+r$ and $r\in\{0,1,\ldots,3c-4\}$. 
Then, $G$ is min-$\mu_3$ if and only if all the following assertions hold:
\begin{enumerate}[label=(\roman*)]
    \item $D(G)$ is simple, cubic, has $2c-2$ vertices, is min-$\mu_3$, and $\lambda(D(G))=3$;
    \item $G$ has $r$ chains of length $s+1$ and $3c-3-r$ chains of length $s$;
    \item if $r\leq c-1$ (respectively, $r\geq 2c-2$), then the chains of $G$ of length $s+1$ (respectively, $s$) in $G$ form a matching of chains; whereas, if $c-1<r<2c-2$, then, for every three chains of $G$ having a common endpoint, there is at least one of them of length $s$ and at least one of them of length $s+1$.
\end{enumerate}
\end{theorem}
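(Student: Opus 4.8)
The plan is to derive a closed formula for $\mu_3(G)$ in terms of the distillation $D(G)$ and the multiset of chain lengths of $G$, and then to minimize that expression over all candidate graphs. One may restrict attention to $2$-connected candidates: by Theorem~\ref{theorem:strong} any min-$\mu_3$ graph of corank $c\ge 3$ is $2$-connected (here $3\le c=m-n+1$ and $\lambda(G)\le\delta(G)\le 3$ since $m\le 3n/2$), and a graph that is not $2$-connected has a $1$- or $2$-edge-cut and hence a very large $\mu_3$. To get the formula I would classify each $3$-edge-cut $F$ of $G$ by the chains it meets. If two edges of $F$ lie in a common chain $\gamma$, then $F$ is automatically an edge-cut, since removing two edges of a chain isolates the segment of $\gamma$ between them; counting these by inclusion–exclusion (a $3$-element subset of $E(G)$ has three, one, or zero co-chain pairs, never exactly two) gives $P(G)(m-2)-2T(G)$, where $P(G)=\sum_{\gamma\in\Gamma(G)}\binom{\ell(\gamma)}{2}$ and $T(G)=\sum_{\gamma\in\Gamma(G)}\binom{\ell(\gamma)}{3}$. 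If instead the three edges of $F$ lie in three distinct chains, then $G-F$ is connected if and only if $D(G)$ with the three corresponding (distinct) edges deleted is connected; hence, assuming $\lambda(D(G))=3$ (which one shows must hold at the optimum, else $\mu_2(G)$ already exceeds its minimum value by Theorem~\ref{theorem:Bauer}, and then so does $\mu_3(G)$), the number of such $F$ equals $N(G):=\sum_{K}\prod\ell(\gamma)$, where the sum is over the $3$-edge-cuts $K$ of $D(G)$ and the product is over the three chains of $G$ collapsed to the edges of $K$. Altogether $\mu_3(G)=P(G)(m-2)-2T(G)+N(G)$, and note $\mu_2(G)=P(G)$ when $\lambda(D(G))=3$.

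Next I would minimize the three contributions, essentially one layer at a time. Writing $f(\ell)=\binom{\ell}{2}(m-2)-2\binom{\ell}{3}$, the first two terms are $\sum_{\gamma}f(\ell(\gamma))$; since $f(0)=f(1)=0$, $f$ is convex on $\{0,1,\ldots,\lfloor m/2\rfloor\}$ (its second difference is $m-2\ell$), and any single chain longer than $m/2$ already makes $\sum_{\gamma}f(\ell(\gamma))$ larger than the balanced value, a smoothing argument shows this sum is minimized by using as many chains as possible with lengths differing by at most $1$. The number of chains equals $|E(D(G))|=|V(D(G))|+c-1$, which is at most $3c-3$ with equality exactly when $D(G)$ is cubic (as $D(G)$ has minimum degree $\ge 3$); this forces the first two assertions of (i) and, with the balancing, assertion (ii). Given these, $N(G)=\sum_{K}\prod\ell(\gamma)$ over the $3$-edge-cuts of the cubic graph $D(G)$; every such graph has the $2c-2$ ``vertex'' $3$-edge-cuts, each the set of three chains incident to a fixed vertex, and, because all $\ell(\gamma)\ge 1$, any additional $3$-edge-cut of $D(G)$ strictly increases $N(G)$ — so minimality forces $D(G)$ to have no other $3$-edge-cut, i.e.\ $D(G)$ is min-$\mu_3$, which completes (i). Finally, with $D(G)$ fixed, $N(G)=\sum_{v\in V(D(G))}s^{\,3-k_v}(s+1)^{k_v}$, where $k_v$ is the number of chains of length $s+1$ incident to $v$ and $\sum_v k_v=2r$; since $k\mapsto s^{3-k}(s+1)^k$ is convex, this sum is minimized by spreading the long chains as evenly as possible over the vertices, which is exactly condition (iii) (the long chains form a matching of chains when $r\le c-1$; by symmetry the short chains do when $r\ge 2c-2$; and every vertex sees a chain of each length when $c-1<r<2c-2$).

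To close the argument, for the ``if'' direction I would check that (i)–(iii) make all the inequalities above tight simultaneously — in particular that a cubic min-$\mu_3$ graph on $2c-2$ vertices admits a distribution of $r$ long chains satisfying (iii), which is a degree-constrained-subgraph question I would settle via a perfect matching of the bridgeless cubic graph $D(G)$ — so that $\mu_3(G)$ attains the common lower bound and $G$ is min-$\mu_3$; the ``only if'' direction then follows by reading off the equality cases in each step. I expect the main obstacle to be the ``only if'' part of the first minimization: ruling out that an unbalanced distillation (a vertex of degree $\ge 4$, or $\lambda(D(G))\le 2$) is compensated by a smaller value of $N(G)$. A crude order-of-magnitude comparison is inconclusive, since both the loss in $P(G)(m-2)$ from using fewer, longer chains and the potential gain in $N(G)$ are of order $m^3/c^2$; so one needs either a sharp form of the smoothing inequality for $f$ coupled with a tight bound on how far $N(G)$ can drop when two chains are merged, or a direct surgery argument — split a vertex of degree $\ge 4$, or ``uncontract'' a $2$-edge-cut of $D(G)$ — shown to strictly decrease both $\mu_2(G)$ and $\mu_3(G)$. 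Here the hypothesis $m\le 3n/2$, equivalently $m\ge 3(c-1)$ so that $s\ge 1$, should be precisely what guarantees enough room for such a surgery.
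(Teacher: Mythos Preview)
The paper does not prove Theorem~\ref{theorem:m3}; it is quoted in Section~\ref{section:background} as a known result of Wang and Zhang~\cite{1997-Wang} and used as a black box. So there is no ``paper's own proof'' to compare your attempt against.

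That said, your outline is a plausible reconstruction of how such a result would be proved, and your decomposition $\mu_3(G)=P(G)(m-2)-2T(G)+N(G)$ is correct (it is the $k=3$ specialization of the paper's Lemma~\ref{lemma:objetivo}, rewritten after separating the sets $H$ of size $\le 2$ from those of size $3$). The convexity/smoothing step for $\sum_\gamma f(\ell(\gamma))$ and the identification of $|E(D(G))|\le 3c-3$ with equality exactly for cubic $D(G)$ are also fine, as is the final step: once $D(G)$ is cubic, simple, min-$\mu_3$, and the chain lengths are balanced, $N(G)$ reduces to $\sum_{v} s^{3-k_v}(s+1)^{k_v}$ and Schur-convexity gives exactly condition~(iii).

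The gap you yourself flag is the real one, and it is not cosmetic. Your argument minimizes $P(G)(m-2)-2T(G)$ and $N(G)$ in separate stages, but these are coupled through $D(G)$: forcing $D(G)$ to be cubic maximizes the number of chains (good for the first term) but also maximizes the number of vertex-separating $3$-edge-cuts of $D(G)$ (potentially bad for $N(G)$). Your order-of-magnitude remark is accurate --- both effects are $\Theta(m^3/c^2)$ --- so a naive comparison does not close the case. The surgery you propose (splitting a high-degree vertex of $D(G)$, or opening up a $2$-edge-cut) is exactly the kind of move the original proof uses, but making it rigorous requires tracking simultaneously the change in $\mu_2$ and in $\mu_3$ and showing a \emph{strict} decrease; this is where most of the work in~\cite{1997-Wang} lies, and your sketch does not yet supply it. A second, smaller gap is in the ``if'' direction: you need that a bridgeless cubic min-$\mu_3$ graph on $2c-2$ vertices actually admits a placement of the $r$ long chains satisfying~(iii). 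For $r\le c-1$ this asks for a matching of size $r$, which follows from the existence of a perfect matching in any bridgeless cubic graph; the middle range $c-1<r<2c-2$ needs a short additional argument (e.g.\ take a perfect matching $M$, put long chains on $M$ and on $r-(c-1)$ further edges chosen so that no vertex is saturated three times, which is possible since $r<2c-2$).
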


\begin{corollary}[Wang \cite{1997-Wang}]\label{cor:wang}
Let $G$ be a $2$-connected simple graph on $n$ vertices and $m$ edges such that $n+2\leq m<3n/2$. If $G$ is min-$\mu_3$, then $G$ is min-$\mu_2$.
\end{corollary}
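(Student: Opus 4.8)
The plan is to derive the corollary directly from the two structural characterizations already at hand: Bauer et al.'s description of min‑$\mu_2$ graphs (Theorem~\ref{theorem:Bauer}) and Wang's description of min‑$\mu_3$ graphs (Theorem~\ref{theorem:m3}). The essential observation is that the list of conditions characterizing min‑$\mu_3$ graphs contains, as a sub‑list, the conditions characterizing min‑$\mu_2$ graphs; so nothing needs to be computed, only compared.

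In detail, let $G$ be $2$-connected on $n$ vertices and $m$ edges with $n+2\leq m<3n/2$, and assume $G$ is min‑$\mu_3$. Put $c=m-n+1$ and let $r,s$ be the unique integers with $m=(3c-3)s+r$ and $r\in\{0,1,\ldots,3c-4\}$; these are exactly the parameters appearing in both theorems. Since $m<3n/2$ implies $m\leq 3n/2$, the hypotheses of Theorem~\ref{theorem:m3} are met, so assertions (i)--(iii) of that theorem hold for $G$. Assertion~(i) gives that $D(G)$ is simple, cubic, has $2c-2$ vertices, and satisfies $\lambda(D(G))=3$ (we may simply discard the extra information that $D(G)$ is itself min‑$\mu_3$); assertion~(ii) gives that $G$ has $r$ chains of length $s+1$ and $3c-3-r$ chains of length $s$. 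But these are precisely conditions (i) and (ii) in Theorem~\ref{theorem:Bauer}, whose remaining hypotheses ($2$-connectedness and $n+2\leq m\leq 3n/2$) are also satisfied. Hence Theorem~\ref{theorem:Bauer} applies and yields that $G$ is min‑$\mu_2$, as desired.

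I do not anticipate any genuine obstacle here: the argument is a bookkeeping comparison of the right‑hand sides of two ``if and only if'' statements. The only things worth verifying are that the quantities $c$, $r$, $s$ are defined identically in Theorems~\ref{theorem:Bauer} and~\ref{theorem:m3}, that $c\geq 3$ (which holds since $m\geq n+2$, so that the parameters make sense and $D(G)$ has at least $4$ vertices), and that assertion~(iii) of Theorem~\ref{theorem:m3} — the matching/length condition on triples of chains with a common endpoint — plays no role and is simply left unused. If one wanted a version not presupposing $2$-connectedness, one could first note that being min‑$\mu_3$ forces $2$-connectedness by Theorem~\ref{theorem:strong}, since $m<3n/2$ gives average degree below $3$, hence $\lambda(G)\leq\delta(G)\leq 2<3\leq m-n+1$; but the corollary as stated already includes $2$-connectedness in its hypotheses.
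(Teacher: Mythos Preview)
Your argument is correct: the conditions (i)--(ii) of Theorem~\ref{theorem:m3} strictly contain conditions (i)--(ii) of Theorem~\ref{theorem:Bauer}, so any min-$\mu_3$ graph satisfying the hypotheses automatically satisfies the characterization of min-$\mu_2$ graphs. Note that the paper does not supply its own proof of Corollary~\ref{cor:wang}; it is quoted as a result of Wang~\cite{1997-Wang}, and your derivation from the two stated characterizations is exactly the natural one.
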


Apart from a few classes $\mathcal C_{n,m}$ with $n\leq 8$ containing no UMRG found by Myrvold~\cite{1990-Myrvold} by exhaustive computer search, the following are all the previously known results on the nonexistence of UMRGs in $\mathcal C_{n,m}$. Notice that all these results are focused on classes $\mathcal C_{m,n}$ of dense graphs.

\begin{theorem}[Kelmans \cite{1981-Kelmans}; Myrvold et al.\ \cite{1991-Myrvold}]
There is no UMRG in the class $\mathcal C_{n,m}$ if any of the following assertions holds:
\begin{enumerate}[label=(\roman*)]
\item $n\geq 6$, $n$ is even, and $m=\binom{n}{2}-\frac{n+2}{2}$;
\item $n\geq 7$, $n$ is odd, and $m=\binom{n}{2}-\frac{n+5}{2}$.
\end{enumerate}
\end{theorem}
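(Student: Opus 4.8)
The plan is as follows. The scheme common to all known nonexistence results applies here: since a UMRG of $\mathcal C_{n,m}$ must be more reliable than every other member of $\mathcal C_{n,m}$ both near $\rho=0$ and near $\rho=1$, it would have to coincide with the locally most reliable graph near $\rho=0$ (provided such a graph exists and is unique) while being at least as reliable near $\rho=1$ as every member of $\mathcal C_{n,m}$. It therefore suffices, for each pair $(n,m)$ in the statement, to (a) identify the \emph{unique} locally most reliable graph $G_0$ near $\rho=0$, and (b) exhibit some $H\in\mathcal C_{n,m}$ that is strictly more reliable than $G_0$ near $\rho=1$; no UMRG can then exist. I would carry this out in detail for case (i); case (ii) is analogous, with a longer list of graphs to compare in step (a).

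\emph{Step (a).} Write $m=\binom n2-t$, so $t=\frac n2+1$; the content of the hypothesis is that $t>\lfloor n/2\rfloor$, so $K_n$ minus a perfect matching does not lie in $\mathcal C_{n,m}$. Every $G\in\mathcal C_{n,m}$ equals $K_n-F$ with $|F|=t$, and $\delta(G)=n-1-\Delta$, where $\Delta$ denotes the largest degree in the graph on $V(G)$ with edge set $F$. By Theorem~\ref{theorem:optlocal}\ref{it1:optlocal}, being more reliable near $\rho=0$ means maximizing $\lambda(G)$ and then minimizing, in lexicographic order, $\mu_{\lambda(G)}(G),\mu_{\lambda(G)+1}(G),\dots$. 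Since $\lambda(G)\le\delta(G)=n-1-\Delta$ and $t>n/2$ forces $\Delta\ge2$, one gets $\lambda(G)\le n-3$, with equality precisely when $F$ has maximum degree $2$, i.e.\ is a disjoint union of paths and cycles; for such dense $G$ I would check that every minimum edge-cut is vertex-separating (edge-separating and nontrivial $(n-3)$-edge-cuts would have size at least $2n-8>n-3$), so that $\mu_{n-3}(G)$ equals the number of degree-$(n-3)$ vertices of $G$, that is, the number of degree-$2$ vertices of $F$. As $\sum_v d_F(v)=2t=n+2$ and $\Delta\le2$, this number is at least $2$, and a short argument shows the only $F$ realizing exactly $2$ are $P_4\cup\frac{n-4}{2}K_2$ and $2P_3\cup\frac{n-6}{2}K_2$, yielding candidates $G_B$ and $G_A$. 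To break this tie I would show two things. First, for every $k<2n-8$ the graph $G$ has no minimal edge-cut besides the vertex-separating ones, so each $k$-edge-cut is a vertex-separating cut with some further edges adjoined, and the number of these depends only on $(n,m)$ and the common degree sequence of $G_A$ and $G_B$. Second, at $k=2n-8$ the counts finally diverge, with $\mu_{2n-8}(G_A)=\mu_{2n-8}(G_B)+1$, the extra cut of $G_A$ being the edge-separating cut of the edge joining its two degree-$(n-3)$ vertices — an edge that is absent in $G_B$. This makes $G_0:=G_B=K_n-\bigl(P_4\cup\frac{n-4}{2}K_2\bigr)$ the unique locally most reliable graph near $\rho=0$.

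\emph{Step (b).} For any edge set $F$ of $K_n$ one has $L(K_n-F)=nI-J-L(F)$ (with $I$ the identity, $J$ the all-ones matrix and $L(\cdot)$ the Laplacian); restricting to the orthogonal complement of the all-ones vector and applying the Matrix--Tree theorem, the number of spanning trees of $K_n-F$ equals $\frac1n\prod_{i=2}^n(n-\nu_i)$, where $0=\nu_1\le\dots\le\nu_n$ are the Laplacian eigenvalues of $F$ and $\sum_{i\ge2}\nu_i=2t$. Near $\rho=1$ the first coefficient at which members of $\mathcal C_{n,m}$ can differ is $\mu_c$, where $c=m-n+1$ is the corank, and $\mu_c(G)=\binom mc-(\text{number of spanning trees of }G)$; so by Theorem~\ref{theorem:optlocal}\ref{it2:optlocal} it suffices to produce some $H\in\mathcal C_{n,m}$ with more spanning trees than $G_0$. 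I would take $H=G_A$: since $x\mapsto\log(n-x)$ is strictly concave and $t<n-1$ (so $F$ cannot be connected and spanning), the spanning-tree count favours complements $F$ that are forests with components as small as possible — as matching-like as possible — and a direct evaluation of the displayed product gives that $G_A$ has exactly $\frac1n\,n^{n/2-2}(n-2)^{n/2-3}>0$ more spanning trees than $G_0$. Only this strict inequality is needed, not that $G_A$ maximizes the spanning-tree count.

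\emph{Conclusion and main obstacle.} Combining the two steps, a UMRG of $\mathcal C_{n,m}$ would have to be $G_0$, yet $G_0$ is not even locally most reliable near $\rho=1$ (it loses there to $G_A$), a contradiction; hence $\mathcal C_{n,m}$ contains no UMRG. For case (ii), $m=\binom n2-\frac{n+5}{2}$, the same plan applies: the complement of the near-$\rho=0$ optimum again has maximum degree $2$, now with five degree-$2$ vertices, and the competing complements are a single path together with some cycles; a longer enumeration singles out the unique optimum, while the spanning-tree formula again exhibits a member of $\mathcal C_{n,m}$ with strictly more spanning trees. The step I expect to be hardest is (a): correctly ordering the higher cut-count coefficients of the competing near-$\rho=0$ candidates, which requires a careful inclusion--exclusion enumeration of the edge-cuts of these dense graphs in order to locate the smallest cut size at which their counts first diverge.
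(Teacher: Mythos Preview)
The paper does not contain a proof of this theorem: it is quoted without proof in Section~\ref{section:background} as prior work of Kelmans and Myrvold et~al. So there is no ``paper's own proof'' to compare your attempt against.

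That said, your plan is exactly the template those references use (and the one the present paper applies to corank~$5$): pin down the unique locally most reliable graph near $\rho=0$ and then exhibit a competitor with strictly more spanning trees. For case~(i) your argument is essentially correct. Two remarks that sharpen it: first, the reason $\mu_k(G_A)=\mu_k(G_B)$ for all $k<2n-8$ is that no $k$-subset of edges with $k\le 2n-8$ can contain two distinct vertex-separating minimal cuts (the smallest union of two such cuts has size $2n-7$, attained in $G_A$), so the inclusion--exclusion collapses to a single sum over vertices and depends only on the common degree sequence; at $k=2n-8$ the unique new contribution is the boundary of the pair of degree-$(n-3)$ vertices, present in $G_A$ (where they are adjacent) and absent in $G_B$. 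Second, your spanning-tree comparison is right: with $x=n-2$ one gets $(x^2-1)^2-(x^2-2)x^2=1$, so $\tau(G_A)-\tau(G_B)=n^{n/2-3}(n-2)^{n/2-3}>0$, which is all you need.

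Your sketch of case~(ii) is too loose and partly misstated. With $t=(n+5)/2$ the complements $F$ with $\Delta(F)=2$ minimizing the number of degree-$2$ vertices have exactly five degree-$2$ vertices and $n-5$ degree-$1$ vertices; hence there are $(n-5)/2$ path components and at most one cycle (two cycles already force at least six degree-$2$ vertices). So the candidate family is ``at most one short cycle together with several paths (and a matching),'' not ``a single path together with some cycles.'' The subsequent tie-breaking among these candidates is a genuinely longer case analysis than in~(i), and you would need to carry it out---or cite \cite{1981-Kelmans,1991-Myrvold}---before the conclusion in case~(ii) is justified.
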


\begin{theorem}[Brown and Cox~\cite{2014-Brown}]
Let $n\geq 6$ and $m=\binom{n}{2}-(n-k)$, 
where $1 \leq k < n/2$. Then, there is no UMRG in the class $\mathcal C_{n,m}$ if any of the following assertions holds:
\begin{enumerate}[label=(\roman*)]
    \item $k=1$ and $n\equiv 1\pmod 3$;
    \item $n-2k \equiv 1\pmod 3$, $k\neq 1$, and $k \neq (n-1)/2$;
    \item $n-2k \equiv 2\pmod 3$ and $k\neq 1$.
\end{enumerate}
\end{theorem}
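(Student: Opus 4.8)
The plan is to use that a uniformly most reliable graph in $\mathcal C_{n,m}$ must be, \emph{simultaneously}, the locally most reliable graph near $\rho=0$ and the locally most reliable graph near $\rho=1$; I would identify each of these two (necessarily unique) graphs explicitly and show that, under hypotheses (i)--(iii), they are different graphs, so no UMRG exists. Everything is carried out through complements: if $G\in\mathcal C_{n,m}$ with $m=\binom n2-(n-k)$, then $\overline G$ has exactly $n-k$ edges, and $1\le k<n/2$ forces $n/2<n-k\le n-1$, so $\overline G$ is never a matching; in particular $\Delta(\overline G)\ge 2$ for every such $G$.

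\emph{Near $\rho=1$.} Since a spanning subgraph with fewer than $n-1$ edges cannot be connected, $\mu_{m-j}(G)=\binom mj$ for all $j\le n-2$, while $\mu_{m-(n-1)}(G)=\binom m{n-1}-\tau(G)$, where $\tau(G)$ is the number of spanning trees of $G$. Hence, by the second part of Theorem~\ref{theorem:optlocal}, the locally most reliable graph near $\rho=1$ is exactly the spanning-tree-maximal graph of $\mathcal C_{n,m}$, with ties broken by the higher connected-spanning-subgraph counts. I would then combine the spectral identity $\tau(K_n-\overline G)=\frac1n\prod_{\lambda\ne 0}(n-\lambda)$, the product over the nonzero Laplacian eigenvalues of $\overline G$ with multiplicity, with the known extremal theory of spanning-tree maximization in dense graphs, to pin down the extremal $\overline G$: when $3\mid n-2k$ it is $kP_2\cup\tfrac{n-2k}{3}C_3$, and otherwise it is a specific small perturbation of this graph (lengthening one of the $P_2$'s, or replacing one triangle by a $4$- or $5$-cycle).

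\emph{Near $\rho=0$.} By the first part of Theorem~\ref{theorem:optlocal}, a UMRG must first maximize the edge-connectivity and then lexicographically minimize $\mu_{\lambda},\mu_{\lambda+1},\dots$. Since $\Delta(\overline G)\ge 2$ we have $\delta(G)\le n-3$, and since any $G$ with $\Delta(\overline G)=2$ has all of its non-vertex-separating edge-cuts of size at least $2n-8\ge n-3$, the maximum attainable edge-connectivity is $n-3$, reached precisely when $\Delta(\overline G)=2$. Among these graphs a short degree-count shows that minimizing $\mu_{n-3}(G)=|\{v:d_{\overline G}(v)=2\}|$ forces $\overline G$ to be a spanning disjoint union of exactly $k$ nontrivial paths and some cycles; the counts $\mu_{n-2},\dots,\mu_{2n-9}$ then depend only on $n,k,m$, so they agree over all such graphs, and the first one that discriminates is $\mu_{2n-8}$, whose structure-dependent part is the number of pairs of degree-$2$ vertices of $\overline G$ that are non-adjacent in $\overline G$. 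Minimizing this, and then the analogous counts $\mu_{3n-15},\dots$ (whose structure-dependent parts are the numbers of small independent sets among the degree-$2$ vertices of $\overline G$), forces all $k$ chains to be single edges and the degree-$2$ vertices to be packed into triangles as far as possible. So the locally most reliable graph near $\rho=0$ is again $K_n-(kP_2\cup\tfrac{n-2k}{3}C_3)$ when $3\mid n-2k$, and otherwise a perturbation of it --- but one dictated by the small-independent-set counts rather than by spanning-tree maximality.

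It then remains to compare the two extremal graphs. When $3\mid n-2k$ they coincide and there is nothing to prove, which is the regime where UMRGs are expected to exist. When $n-2k\equiv 1$ or $2\pmod 3$: a UMRG would have to be the unique near-$\rho=0$ optimizer, hence would have complement of the described $(kP_2\cup\text{cycles})$-with-a-perturbation shape, and it suffices to show that such a graph has strictly fewer spanning trees than the true maximum, i.e.\ that the cut-count-minimizing perturbation is not the $\tau$-maximizing one; this is done by a residue-dependent comparison together with a direct spanning-tree computation. The argument goes through except in the degenerate situations where the residue leaves essentially no freedom --- namely $k=1$, to be treated separately, and, for residue $1$, $k=(n-1)/2$ --- which is how the exclusions in the sufficient list (i)--(iii) arise. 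I expect the main obstacle to be exactly this last part: establishing that $kP_2\cup\tfrac{n-2k}{3}C_3$ and its relevant perturbations really are spanning-tree-maximal (rather than merely plausible candidates), controlling the low-order cut-counts in the near-$\rho=0$ analysis down to the stated combinatorial quantities, and then carrying out the case analysis over the residues of $n-2k$ modulo $3$, together with the genuine small-$n$ boundary checks, carefully enough to see that the two optimizers never coincide in the claimed cases.
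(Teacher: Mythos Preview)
The paper does not contain a proof of this theorem. It is quoted in Section~\ref{section:background} as a previously known result due to Brown and Cox~\cite{2014-Brown}, alongside the Kelmans--Myrvold theorem, purely as background on the nonexistence of UMRGs in dense classes $\mathcal C_{n,m}$; the paper's own contribution concerns only the sparse corank-$5$ classes $\mathcal C_{12s+4,12s+8}$. So there is no ``paper's own proof'' against which to compare your attempt.

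That said, your overall strategy is the right one and is indeed the one Brown and Cox use: work in the complement, identify the unique locally most reliable graph near $\rho=1$ via spanning-tree maximization (using the Kelmans-type spectral formula $\tau(K_n-\overline G)=n^{-1}\prod_{\lambda\neq 0}(n-\lambda)$), identify the unique locally most reliable graph near $\rho=0$ by lexicographically minimizing the small edge-cut counts, and then show these two graphs differ in the stated residue classes. Your description of the $\rho=1$ side is essentially what happens in~\cite{2014-Brown}. On the $\rho=0$ side, however, your sketch is looser than what is actually needed: the claim that $\mu_{n-2},\dots,\mu_{2n-9}$ depend only on $(n,k,m)$ once $\overline G$ is a union of $k$ paths and some cycles, and that the first discriminating coefficient is $\mu_{2n-8}$ with the stated combinatorial interpretation, is not something you have justified, and the further appeal to $\mu_{3n-15},\dots$ and ``small independent sets among the degree-$2$ vertices'' is speculative. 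Brown and Cox handle the near-$\rho=0$ optimizer by a more direct argument, and the comparison step does not require controlling cut counts that far out. If you intend to reconstruct their proof, you should consult~\cite{2014-Brown} for the precise form of the near-$\rho=0$ analysis rather than relying on the heuristic you describe.
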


\section{Preliminaries}\label{section:preliminary}

In this section, we distinguish a class of 2-connected simple graphs having more edges than vertices that we call \emph{fair graphs}, characterized by the fact that the lengths of every two of its chains differ by at most one. The main result of this section is Proposition~\ref{prop:mu^I}, which states that, if $\mathcal S$ is a set of fair graphs having all the same number of vertices, edges, and chains, then, for suitable values of $k$, the problem of minimizing the number of $k$-edge-cuts over $\mathcal S$ is equivalent to the problem of minimizing the number of induced $k$-edge-cuts (see Definition~\ref{def:induced}) over $\mathcal S$.

Let $G$ be a $2$-connected simple graph having more edges than vertices. Recall that we denote by $\Gamma(G)$ the set of all chains of $G$. Moreover, if $k$ is a nonnegative integer, we denote by $\Gamma^{(k)}(G)$ the family of all subsets of $\Gamma(G)$ of size $k$; i.e., $\Gamma^{(k)}(G)=\binom{\Gamma(G)}k$. We also let
\[ \Gamma^{(k)}_-(G)=\{H\in\Gamma^{(k)}(G):\,G\ominus H\text{ is disconnected}\}. \]

The following lemma gives an expression for the number of $k$-edge-cuts of any $2$-connected simple graph.

\begin{lemma}\label{lemma:objetivo} 
For each $2$-connected simple graph $G$ on $n$ vertices and 
$m$ edges such that $m> n$ and each $k\in \{0,1,\ldots,m\}$,
\begin{equation}\label{eq:mk}
  \mu_k(G)=\binom mk-\sum_{H\in\Gamma^{(k)}(G)}\prod_{\gamma\in H}\ell(\gamma)+\sum_{H\in\Gamma^{(k)}_-(G)}\prod_{\gamma\in H}\ell(\gamma).
\end{equation}
\end{lemma}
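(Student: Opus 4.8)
The plan is to count $k$-edge-cuts of $G$ by an inclusion–exclusion / complementary-counting argument over subsets of edges, but organized by chains rather than by individual edges. Recall that a set $F$ of $k$ edges of $G$ is \emph{not} an edge-cut precisely when $G-F$ is connected. The right-hand side of \eqref{eq:mk} has the shape $\binom mk - (\text{something}) + (\text{correction})$, so I expect the term $\binom mk$ to count all $k$-subsets of $E(G)$, the middle sum to count (with overcounting) those $k$-subsets $F$ that are \emph{not} edge-cuts, and the last sum to repair the overcount. So the first step is to establish the key structural fact: if $G$ is $2$-connected with $m>n$, then for a set $F\subseteq E(G)$ with $|F|=k$, the graph $G-F$ is connected \textbf{unless} $F$ contains at least one full chain of $G$. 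Concretely, since every internal vertex of a chain has degree $2$, removing a proper nonempty subset of the edges of a single chain $\gamma$ (while removing nothing else) leaves $G$ connected; more generally, if $F$ contains no chain entirely, then along each chain the removed edges form a proper subset, and one can argue that $G-F$ remains connected because each chain still ``survives'' as a path between its two endpoints in the distillation, which is connected. This is the conceptual heart of the lemma.

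Granting that, I would next set up the bookkeeping. For a subset $H\subseteq\Gamma(G)$ of chains, let $A_H$ denote the collection of $k$-subsets $F\subseteq E(G)$ that contain every edge of every chain in $H$ (and $k - \sum_{\gamma\in H}\ell(\gamma)$ further arbitrary edges). By the structural fact, a $k$-subset $F$ fails to be an edge-cut only if it contains at least one entire chain, i.e.\ only if $F\in A_{\{\gamma\}}$ for some chain $\gamma$. Now I want to count $\mu_k(G) = \binom mk - \#\{F : |F|=k,\ F\text{ not an edge-cut}\}$. The set of $k$-subsets that are not edge-cuts is a union of certain $A_{\{\gamma\}}$'s, but it is not all of $\bigcup_\gamma A_{\{\gamma\}}$: a $k$-subset containing a whole chain $\gamma$ \emph{can} still be an edge-cut if the remaining edges disconnect things, and moreover containing a single chain $\gamma$ already disconnects $G$ when $\gamma$ is ``essential''. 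This is exactly why the naive inclusion–exclusion needs the $\Gamma^{(k)}_-(G)$ correction. The clean way to see it: partition the $k$-subsets $F$ that contain at least one full chain according to the set $H(F)$ of chains fully contained in $F$; then $F$ is \emph{not} an edge-cut iff $G\ominus H(F)$ is connected, using again that removing only partial pieces of the remaining chains cannot disconnect. A Bonferroni/inclusion–exclusion over which chains are contained then yields exactly the two sums in \eqref{eq:mk} — the key combinatorial identity being that the number of $k$-subsets of $E(G)$ containing all of a prescribed family $H$ of (edge-disjoint) chains but, summed with signs, is controlled by the products $\prod_{\gamma\in H}\ell(\gamma)$; more precisely the generating-function identity $\sum_{H\subseteq\Gamma(G)} x^{\sum_{\gamma\in H}\ell(\gamma)} \prod_{\gamma\in H}(\text{something})$ collapses because the chains partition $E(G)$.

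Actually the cleanest route avoids heavy inclusion–exclusion: directly classify every $k$-subset $F$ of $E(G)$ by the subfamily $H=H(F)\in\Gamma^{(j)}(G)$ of chains it \emph{fully} contains (for some $j$). For fixed $H$ with $\sum_{\gamma\in H}\ell(\gamma)\le k$, the number of $k$-subsets $F$ with $H(F)\supseteq H$ and containing no edge outside $\bigcup H$ from any chain not in $H$ beyond a proper subset — hmm — this needs care. I would instead count, for each chain-subset $H$, the $k$-subsets that contain exactly the chains in $H$ and no others fully; call this $N_H$. Then $\mu_k(G)=\sum_{H:\,G\ominus H\text{ disconnected}} N_H$ by the structural fact, while $\binom mk=\sum_{\text{all }H} N_H$. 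So $\mu_k(G)=\binom mk-\sum_{H:\,G\ominus H\text{ connected}} N_H$. Finally one computes $\sum_{H\text{ connected}} N_H$: using that $N_H = (\text{number of }k\text{-subsets containing all of }\bigcup H) - (\text{those also containing some extra chain})$ and telescoping via $\binom{m-\sum_{\gamma\in H}\ell(\gamma)}{k-\sum_{\gamma\in H}\ell(\gamma)}$, together with the observation that $G\ominus H$ connected for $H\ne\emptyset$ forces all chains in $H$ to be ``redundant,'' one recognizes the alternating cancellation that leaves precisely $\sum_{H\in\Gamma^{(k)}(G)}\prod_{\gamma\in H}\ell(\gamma) - \sum_{H\in\Gamma^{(k)}_-(G)}\prod_{\gamma\in H}\ell(\gamma)$. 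I expect the main obstacle to be making this last cancellation rigorous and transparent — i.e.\ identifying that the ``$k$-subsets containing no full chain'' are counted by $\sum_{H\in\Gamma^{(k)}(G)}\prod_{\gamma\in H}\ell(\gamma)$ exactly when one chooses one edge-position per chain in a family of $k$ chains (since within a chain of length $\ell$ there are $\ell$ ``slots,'' and a $k$-subset hitting $k$ distinct chains in one slot each is hit $\prod\ell(\gamma)$ ways), and then checking that subtracting the $\Gamma^{(k)}_-$ part accounts for the families $H$ of $k$ chains whose simultaneous collapse already disconnects. Verifying that these two interpretations literally coincide with the closed form, including boundary cases $k<\lambda(G)$ and $k>m-n+1$, is the part that needs the most attention.
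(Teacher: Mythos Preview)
Your proposal contains a genuine error in the central structural claim, and this error is what sends you down the convoluted inclusion--exclusion path.

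You assert that for $F\subseteq E(G)$, the graph $G-F$ is connected unless $F$ contains at least one \emph{full} chain, and that ``removing a proper nonempty subset of the edges of a single chain $\gamma$ (while removing nothing else) leaves $G$ connected.'' This is false. Take a chain $\gamma$ of length $3$, say with edge sequence $u a,\,a b,\,b v$ where $a,b$ are internal (degree-$2$) vertices. Removing the two edges $u a$ and $b v$ is a proper subset of $\gamma$, yet it strands the internal vertices $a,b$ and disconnects $G$. More generally, removing \emph{any two} edges from a single chain already disconnects $G$, because the internal vertices between them have no other attachments.

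The correct structural fact is: $G-F$ is connected if and only if (i) $F$ contains \emph{at most one} edge from each chain, and (ii) letting $H$ be the set of chains meeting $F$, the graph $G\ominus H$ is connected. Condition (i) forces $|H|=k$, so $H\in\Gamma^{(k)}(G)$, and condition (ii) says $H\notin\Gamma^{(k)}_-(G)$. With this in hand no inclusion--exclusion is needed at all: the $k$-subsets $F$ with $G-F$ connected are in bijection with pairs $(H,\text{choice of one edge per }\gamma\in H)$ for $H\in\Gamma^{(k)}(G)\setminus\Gamma^{(k)}_-(G)$, giving
\[
\binom{m}{k}-\mu_k(G)=\sum_{H\in\Gamma^{(k)}(G)\setminus\Gamma^{(k)}_-(G)}\prod_{\gamma\in H}\ell(\gamma),
\]
which rearranges to \eqref{eq:mk}. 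This is exactly the paper's three-line argument. Your decomposition by ``which full chains does $F$ contain'' is based on the wrong dichotomy and cannot be salvaged without replacing it by the one-edge-per-chain criterion; once you make that replacement, the whole inclusion--exclusion scaffolding becomes unnecessary.
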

\begin{proof} Notice that $\mu_k(G)$ can be computed by subtracting from $\binom mk$ the number of choices for $k$ edges of $G$ whose removal keeps $G$ connected. In order to prevent $G$ from becoming disconnected, each such choice of edges must consist of at most one edge from each chain and the edges must be taken from a set $H$ of chains so that $G\ominus H$ is not disconnected. Hence,
\begin{align*}
     \mu_k(G)&=\binom mk-\sum_{H\in\Gamma^{(k)}(G){}-\Gamma^{(k)}_-(G)}\prod_{\gamma\in H}\ell(\gamma)
          =\binom mk-\left(\sum_{H\in\Gamma^{(k)}(G)}\prod_{\gamma\in H } \ell(\gamma)-\sum_{H\in\Gamma^{(k)}_-(G)}\prod_{\gamma\in H} \ell(\gamma)\right). \qedhere
\end{align*}
\end{proof}

Notice that if the graph $G$ in the above lemma has precisely $t$ chains and the lengths of all its chains are $\ell_1,\ell_2,\ldots,\ell_t$, then the second term of the right-hand side of~\eqref{eq:mk} is
\begin{equation}\label{eq:aux} \sum_{H\in\Gamma^{(k)}}\prod_{\gamma\in H}\ell(\gamma)=\sum_{J\in\binom{[t]}{k}}\prod_{i\in J}\ell_i. \end{equation}
If $\ell_1+\ell_2+\cdots+\ell_t$ is constrained to be equal to a fixed value $m$, the right-hand side of the above equation is maximized when the tuple $(\ell_1,\ell_2,\ldots,\ell_t)$ is fair as defined below. This maximality result is proved in Lemma~\ref{lemma:fairness}.

\begin{definition}
A tuple $(x_1,x_2,\ldots,x_t)\in \mathbb{Z}_{+}^t$ is \emph{fair} if $|x_i-x_j|\leq 1$ for all $i,j \in \{1,2,\ldots,t\}$. A graph $G$ is \emph{fair} if it is a $2$-connected simple graph having more edges than vertices such that the tuple whose entries are the lengths of all the chains in $G$ is fair.
\end{definition}

\begin{lemma}\label{lemma:fairness} Let $k$ and $t$ be integers such that $2 \leq k\leq t$ and let
\[ \phi^{(k)}_{t}(\ell_1,\ell_2,\ldots,\ell_t)=\sum_{J\in\binom{[t]}{k}}\prod_{i\in J}\ell_i\qquad\text{for each }(\ell_1,\ell_2,\ldots,\ell_t)\in\mathbb Z_+^t. \]
Let $m$ be any integer such that $m \geq t$ and let $L_{t,m}=\{(\ell_1,\ell_2,\ldots,\ell_t)\in\mathbb Z_+^t:\ell_1+\ell_2+\cdots+\ell_t=m\}$. 
The maximum of $\phi_t^{(k)}(\ell_1,\ell_2,\ldots,\ell_t)$ as $(\ell_1,\ell_2,\ldots,\ell_t)$ ranges over $L_{t,m}$ is attained precisely at those tuples that are fair.
\end{lemma}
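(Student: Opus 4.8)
The plan is to prove the statement by establishing the following local-move lemma: if a tuple $(\ell_1,\ldots,\ell_t)\in L_{t,m}$ is \emph{not} fair, then it can be strictly increased in $\phi_t^{(k)}$ by a small adjustment, and conversely that all fair tuples give the same value of $\phi_t^{(k)}$ (so that, since $L_{t,m}$ is finite and nonempty, the maximum is attained exactly on the fair tuples). For the second, easier half, note that any two fair tuples in $L_{t,m}$ are permutations of one another: a fair tuple must consist of some number of entries equal to $\lfloor m/t\rfloor$ and the rest equal to $\lceil m/t\rceil$, and the count of each is forced by the constraint $\sum \ell_i=m$. Since $\phi_t^{(k)}$ is visibly symmetric in its arguments, it takes the same value on all fair tuples.

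For the main half I would argue by a smoothing/exchange argument. Suppose $(\ell_1,\ldots,\ell_t)$ is not fair; then there are indices $i,j$ with $\ell_i\ge \ell_j+2$. Consider replacing the pair $(\ell_i,\ell_j)$ by $(\ell_i-1,\ell_j+1)$; this keeps the tuple in $L_{t,m}$ (all entries remain in $\mathbb Z_+$ because $\ell_i-1\ge \ell_j+1\ge 2$). The key computation is to show $\phi_t^{(k)}$ strictly increases under this move. To see this, split the sum $\phi_t^{(k)}(\ell_1,\ldots,\ell_t)=\sum_{J\in\binom{[t]}{k}}\prod_{r\in J}\ell_r$ according to how $J$ meets $\{i,j\}$: the terms with $J\supseteq\{i,j\}$ contribute $\ell_i\ell_j\cdot A$ where $A=\phi_{t-2}^{(k-2)}$ evaluated at the remaining $t-2$ coordinates (interpreting $\phi^{(0)}\equiv 1$ and $\phi^{(1)}$ as the sum); the terms with $J\cap\{i,j\}=\{i\}$ contribute $\ell_i\cdot B$, those with $J\cap\{i,j\}=\{j\}$ contribute $\ell_j\cdot B$ for the same $B=\phi_{t-2}^{(k-1)}$ of the other coordinates; and the terms disjoint from $\{i,j\}$ are unchanged. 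Hence the change in $\phi_t^{(k)}$ equals
\[
\bigl[(\ell_i-1)(\ell_j+1)-\ell_i\ell_j\bigr]A+\bigl[(\ell_i-1)+(\ell_j+1)-\ell_i-\ell_j\bigr]B=(\ell_i-\ell_j-1)\,A.
\]
Since $\ell_i\ge\ell_j+2$, the factor $\ell_i-\ell_j-1\ge 1>0$, and since all remaining coordinates are positive integers and $k\ge 2$ (so $k-2\ge 0$ and the index set of size $t-2\ge k-2$ is large enough), $A=\phi_{t-2}^{(k-2)}\ge 1>0$. Therefore the move strictly increases $\phi_t^{(k)}$, so no non-fair tuple can be a maximizer.

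Combining the two halves: the maximum over the finite set $L_{t,m}$ is attained, and by the exchange argument it cannot be attained at a non-fair tuple, hence it is attained at some fair tuple; by symmetry it is then attained at \emph{every} fair tuple and only there. The main obstacle, such as it is, is purely bookkeeping — correctly partitioning $\binom{[t]}{k}$ by intersection with $\{i,j\}$ and handling the small/degenerate cases $k=2$ (where $A=\phi_{t-2}^{(0)}=1$) and $k=t$ cleanly — together with checking the positivity of $A$, which only needs $t-2\ge k-2$, i.e. the hypothesis $k\le t$. I would also remark that iterating this move shows that from any tuple one reaches a fair tuple in finitely many strictly-increasing steps, which reproves attainment without invoking finiteness, but the finite-set argument is the most economical.
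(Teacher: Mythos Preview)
Your proposal is correct and follows essentially the same smoothing/exchange argument as the paper: both decompose $\phi_t^{(k)}$ according to how $J$ meets the two distinguished indices, obtain the difference $(\ell_i-\ell_j-1)\sum_{J}\prod_{r\in J}\ell_r$ over $(k-2)$-subsets of the remaining coordinates, and conclude by symmetry that all fair tuples share the maximum value. Your write-up is slightly more explicit about the edge cases ($k=2$, positivity of $A$, and the finiteness of $L_{t,m}$), but the underlying idea and computation are the same.
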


\begin{notation} The maximum attained by $\phi_t^{(k)}(\ell_1,\ell_2,\ldots,\ell_t)$ in $L_{t,m}$ will be denoted by $\Phi^{(k)}_{t}(m)$.\end{notation}

\begin{proof}[Proof of Lemma~\ref{lemma:fairness}]
Since $\phi^{(k)}_{t}(\ell_1,\ell_2,\ldots,\ell_t)$ is a symmetric polynomial on $\ell_1,\ell_2,\ldots,\ell_t$, we assume, without loss of generality, that $\ell_1\geq\ell_2\geq\cdots\geq \ell_t$. Thus, $(\ell_1,\ell_2,\ldots,\ell_t)$ is fair if and only if $\ell_1\leq\ell_t+1$. Hence, it suffices to prove that if $\ell_1 \geq \ell_t+2$, then $\phi_{t}^{(k)}(\ell_1,\ell_2,\ldots,\ell_t)<\phi_{t}^{(k)}(\ell_1-1,\ell_2,\ldots,\ell_{t-1},\ell_t+1)$. In fact,
\begin{align*}
&\phi^{(k)}_{t}(\ell_1-1,\ell_2,\ldots,\ell_{t-1},\ell_t+1)-\phi^{(k)}_{t}(\ell_1,\ell_2,\ldots,\ell_t)\\
&\quad=\sum_{J\in \binom{[t]-\{1,t\}}{k-2}}(\ell_1-1)(\ell_{t}+1)\prod_{i\in J}\ell_i+{}\\
&\quad\phantom{{}={}}\sum_{J\in \binom{[t]-\{1,t\}}{k-1}}(\ell_1-1)\prod_{i\in J}\ell_i+\sum_{J\in \binom{[t]-\{1,t\}}{k-1}}(\ell_t+1)\prod_{i\in J}\ell_i-{}\\
&\quad\phantom{{}={}}\left(\sum_{J\in \binom{[t]-\{1,t\}\}}{k-2}}\ell_1\ell_t\prod_{i\in J}\ell_i+\sum_{J\in \binom{[t]-\{1,t\}}{k-1}}\ell_1\prod_{i\in J}\ell_i+\sum_{J\in \binom{[t]-\{1,t\}}{k-1}}\ell_t\prod_{i\in J}\ell_i\right)\\
&\quad=\sum_{J\in \binom{[t]-\{1,t\}}{k-2}}(\ell_1-\ell_t-1)\prod_{i\in J}\ell_i>0.
\end{align*}
This proves $\phi^{(k)}_t$ can only attain its maximum over $L_{t,m}$ in tuples that are fair. As the value of $\phi^{(k)}_t$ is the same over all fair tuples in $L_{t,m}$, the proof the lemma is complete.
\end{proof}

\begin{definition}\label{def:induced}
Let $G$ be a $2$-connected simple graph having more edges than vertices. Let $\{f_1,f_2,\ldots,f_k\}$ be a $k$-edge-cut of $D(G)$. For each $i \in \{1,2,\ldots,k\}$, let $\gamma_i$ be the chain of $G$ corresponding to the edge $f_i$ of $D(G)$. We say a $k$-edge-cut $\{e_1,e_2,\ldots,e_k\}$ of $G$ \emph{is induced by $\{f_1,f_2,\ldots,f_k\}$} if $e_i \in \gamma_i$ for each $i \in \{1,2,\ldots,k\}$. Moreover,
\begin{enumerate}[label=(\roman*)]
\item if $\{f_1,f_2,\ldots,f_k\}$ is vertex-separating, then $\{e_1,e_2,\ldots,e_k\}$ is called \emph{Type-V};
\item if $\{f_1,f_2,\ldots,f_k\}$ is edge-separating but not vertex-separating, then
$\{e_1,e_2,\ldots,e_k\}$ is called \emph{Type-E};
\item if $\{f_1,f_2,\ldots,f_k\}$ is nontrivial, then $\{e_1,e_2,\ldots,e_k\}$ is called \emph{Type-N}.
\end{enumerate}
The number of Type-V, Type-E, and Type-N $k$-edges-cuts of $G$ is denoted by $\mu_k^{\mathrm V}(G)$, $\mu_k^{\mathrm E}(G)$, and $\mu_k^{\mathrm N}(G)$, respectively. The total number of induced $k$-edge-cuts of $G$ is denoted by $\mu_k^{\mathrm I}(G)$; i.e.,
\begin{equation*}
   \mu_k^{\mathrm I}(G)=\mu_k^{\mathrm V}(G)+\mu_k^{\mathrm E}(G)+\mu_k^{\mathrm N}(G).
\end{equation*}
\end{definition}

Notice that, by the definition of $\mu_k^{\mathrm I}(G)$, it coincides with the third term of the right-hand side of~\eqref{eq:mk}; i.e.,
\[ \mu_k^{\mathrm I}(G)=\sum_{H\in\Gamma^{(k)}_-(G)}\prod_{\gamma\in H}\ell(\gamma). \]
This fact combined with Lemma~\ref{lemma:fairness} leads to the following result.

\begin{proposition}\label{prop:mu^I} Let $k$ and $t$ be positive integers such that $2\leq k\leq t$ and let $\mathcal S$ be a nonempty set of fair graphs on $n$ vertices and $m$ edges such that $m>n$ and having precisely $t$ chains. Then, as $G$ ranges over $\mathcal S$, the minimum of $\mu_k(G)$ is attained precisely in the same graphs $G$ where the minimum of $\mu_k^{\mathrm I}(G)$ is attained. \end{proposition}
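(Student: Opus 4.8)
The plan is to combine Lemma~\ref{lemma:objetivo} with Lemma~\ref{lemma:fairness} in the most direct possible way. First I would recall that by Lemma~\ref{lemma:objetivo}, for any $G\in\mathcal S$ on $n$ vertices and $m$ edges with $m>n$,
\[ \mu_k(G)=\binom mk-\sum_{H\in\Gamma^{(k)}(G)}\prod_{\gamma\in H}\ell(\gamma)+\mu_k^{\mathrm I}(G), \]
where I have already identified the third term with $\mu_k^{\mathrm I}(G)$ as noted right before the proposition. The term $\binom mk$ is a constant over all of $\mathcal S$ since every graph in $\mathcal S$ has exactly $m$ edges. So the whole content of the proposition reduces to showing that the middle term, $\sum_{H\in\Gamma^{(k)}(G)}\prod_{\gamma\in H}\ell(\gamma)$, is \emph{also} constant over $\mathcal S$.

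The key step is exactly this constancy. By~\eqref{eq:aux}, if $G$ has $t$ chains of lengths $\ell_1,\ldots,\ell_t$, then $\sum_{H\in\Gamma^{(k)}(G)}\prod_{\gamma\in H}\ell(\gamma)=\phi_t^{(k)}(\ell_1,\ldots,\ell_t)$, which depends only on the multiset of chain lengths. Now every $G\in\mathcal S$ has the same number of chains $t$ and the same total edge count $m=\ell_1+\cdots+\ell_t$, so the length-tuple of $G$ lies in $L_{t,m}$; moreover, since $G$ is a fair graph, this tuple is a fair tuple. By Lemma~\ref{lemma:fairness}, $\phi_t^{(k)}$ attains the same value $\Phi_t^{(k)}(m)$ at \emph{every} fair tuple in $L_{t,m}$. (Concretely, the fair tuples in $L_{t,m}$ are precisely the permutations of the tuple with $m-t\lfloor m/t\rfloor$ entries equal to $\lceil m/t\rceil$ and the rest equal to $\lfloor m/t\rfloor$, and $\phi_t^{(k)}$ is symmetric, so it is literally the same number.) Hence $\sum_{H\in\Gamma^{(k)}(G)}\prod_{\gamma\in H}\ell(\gamma)=\Phi_t^{(k)}(m)$ for all $G\in\mathcal S$.

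Putting these together, for every $G\in\mathcal S$,
\[ \mu_k(G)=\binom mk-\Phi_t^{(k)}(m)+\mu_k^{\mathrm I}(G), \]
where the first two terms on the right-hand side do not depend on the choice of $G$ in $\mathcal S$. Therefore $\mu_k(G)$ and $\mu_k^{\mathrm I}(G)$ differ by a constant on $\mathcal S$, and consequently the minimum of $\mu_k(G)$ over $\mathcal S$ is attained at exactly the same graphs as the minimum of $\mu_k^{\mathrm I}(G)$ over $\mathcal S$, which is the claim. I do not anticipate a genuine obstacle here: the only thing to be careful about is making sure the hypotheses of Lemma~\ref{lemma:fairness} are met — namely $2\le k\le t$ (given) and $m\ge t$ (which holds because every chain has length at least $1$, so $m=\ell_1+\cdots+\ell_t\ge t$) — and noting that $\mathcal S$ nonempty guarantees such graphs exist so the minima in question are well defined.
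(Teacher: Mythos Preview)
Your proof is correct and follows essentially the same route as the paper: apply Lemma~\ref{lemma:objetivo}, use equation~\eqref{eq:aux} together with Lemma~\ref{lemma:fairness} to see that the second term equals the constant $\Phi_t^{(k)}(m)$ for every fair $G\in\mathcal S$, and conclude that $\mu_k$ and $\mu_k^{\mathrm I}$ differ by a constant on $\mathcal S$. Your explicit verification of the hypotheses $2\le k\le t$ and $m\ge t$ (hence $k\le m$) is a nice touch that the paper handles more tersely.
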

\begin{proof} Since the $t$ chains of $G$ are nonempty and pairwise disjoint, $t\leq m$. Thus, $k\leq m$ and, as $G$ is fair, Lemma~\ref{lemma:objetivo}, equation~\eqref{eq:aux}, and Lemma~\ref{lemma:fairness} imply that 
\begin{equation}\label{eq:mk2}
\mu_k(G)=\binom mk-\Phi^{(k)}_{t}(m)+\sum_{H\in\Gamma^{(k)}_-(G)}\prod_{\gamma\in H}\ell(\gamma).
\end{equation}
As $m$, $k$, and $t$ are fixed, the first two terms of the right-hand side of \eqref{eq:mk2} are constant. Hence, the minimum of $\mu_k(G)$ is attained precisely when the third term of the right-hand side of \eqref{eq:mk2} is minimized. And, as noted above, this term coincides with $\mu_k^{\mathrm I}(G)$.\end{proof}

\section{Locally most reliable graph near zero}\label{section:lmrg}

In this section, we characterize, for each positive integer $s$, the graph that is locally most reliable near $\rho=0$ in the class $\mathcal{C}_{12s+4,12s+8}$. This section is organized as follows. In Subsection~\ref{subsection:minm3}, all the min-$\mu_3$ graphs in $\mathcal{C}_{12s+4,12s+8}$ are found. Among these graphs, the only graph that minimizes $\mu_4$ is identified in Subsection~\ref{subsection:minm4}. In Subsection~\ref{subsection:lmrg-final} we prove that this graph is locally most reliable near $\rho=0$.

\subsection{Minimization of \texorpdfstring{$\mu_3$}{mu3}}\label{subsection:minm3}

In this subsection, we find the list of all the min-$\mu_3$ graphs in $\mathcal{C}_{12s+4,12s+8}$ for each positive integer $s$. The following remarks will be useful in achieving this goal.

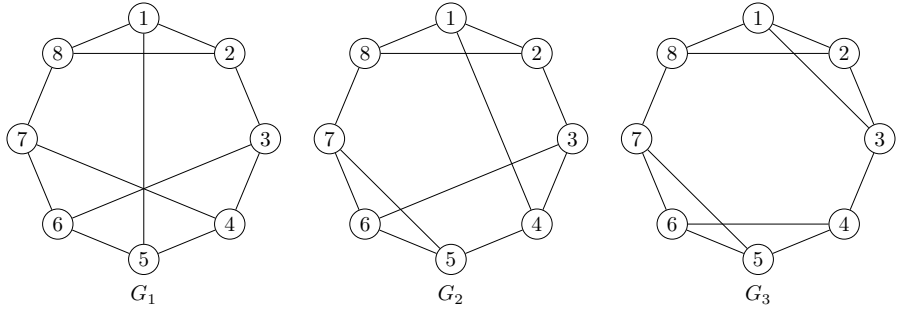
\begin{figure}
\begin{center}
\scalebox{0.8}{
\begin{tabular}{c}
\begin{tikzpicture}[ scale=1, nodo/.style={circle,draw=black!120,fill=white!120,inner sep=0pt,minimum size=5mm}]

\node[nodo] (1) at (0,2) {$1$};
\node[nodo] (2) at (1.4142,1.4142) {$2$};
\node[nodo] (3) at (2,0) {$3$};
\node[nodo] (4) at (1.4142,-1.4142) {$4$};
\node[nodo] (5) at (0,-2) {$5$};
\node[nodo] (6) at (-1.4142,-1.4142) {$6$};
\node[nodo] (7) at (-2,0) {$7$};
\node[nodo] (8) at (-1.4142,1.4142) {$8$};

\draw (1) to (2);
\draw (2) to (3);
\draw (3) to (4);
\draw (4) to (5);
\draw (5) to (6);
\draw (6) to (7);
\draw (7) to (8);
\draw (8) to (1);
\draw (1) to (5);
\draw (2) to (8);
\draw (3) to (6);
\draw (4) to (7);
\end{tikzpicture} %
\\
$G_1$
\end{tabular}
\begin{tabular}{c}
\begin{tikzpicture}[ scale=1, nodo/.style={circle,draw=black!120,fill=white!120,inner sep=0pt,minimum size=5mm}]

\node[nodo] (1) at (0,2) {$1$};
\node[nodo] (2) at (1.4142,1.4142) {$2$};
\node[nodo] (3) at (2,0) {$3$};
\node[nodo] (4) at (1.4142,-1.4142) {$4$};
\node[nodo] (5) at (0,-2) {$5$};
\node[nodo] (6) at (-1.4142,-1.4142) {$6$};
\node[nodo] (7) at (-2,0) {$7$};
\node[nodo] (8) at (-1.4142,1.4142) {$8$};

\draw (1) to (2);
\draw (2) to (3);
\draw (3) to (4);
\draw (4) to (5);
\draw (5) to (6);
\draw (6) to (7);
\draw (7) to (8);
\draw (8) to (1);
\draw (1) to (4);
\draw (2) to (8);
\draw (3) to (6);
\draw (5) to (7);
\end{tikzpicture} %
\\
$G_2$
\end{tabular}
\begin{tabular}{c}
\begin{tikzpicture}[ scale=1, nodo/.style={circle,draw=black!120,fill=white!120,inner sep=0pt,minimum size=5mm}]

\node[nodo] (1) at (0,2) {$1$};
\node[nodo] (2) at (1.4142,1.4142) {$2$};
\node[nodo] (3) at (2,0) {$3$};
\node[nodo] (4) at (1.4142,-1.4142) {$4$};
\node[nodo] (5) at (0,-2) {$5$};
\node[nodo] (6) at (-1.4142,-1.4142) {$6$};
\node[nodo] (7) at (-2,0) {$7$};
\node[nodo] (8) at (-1.4142,1.4142) {$8$};

\draw (1) to (2);
\draw (2) to (3);
\draw (3) to (4);
\draw (4) to (5);
\draw (5) to (6);
\draw (6) to (7);
\draw (7) to (8);
\draw (8) to (1);
\draw (1) to (3);
\draw (2) to (8);
\draw (4) to (6);
\draw (5) to (7);
\end{tikzpicture} %
\\
$G_3$
\end{tabular}
}
\end{center}
\caption{Three cubic graphs on 8 vertices. \label{figure:cubicosB}}
\end{figure}

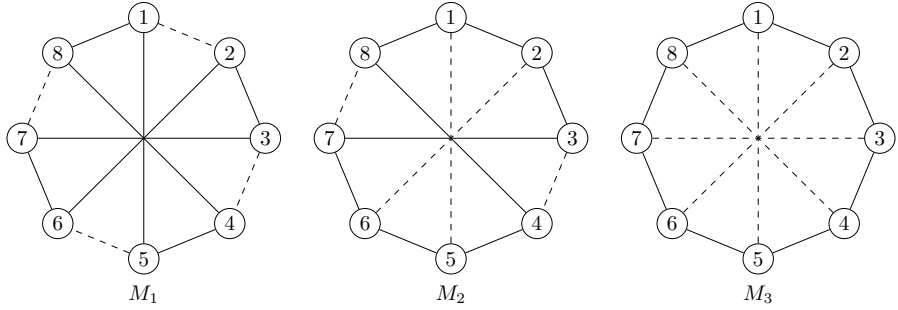
\begin{figure}
\begin{center}
\scalebox{0.8}{
\begin{tabular}{c}
\begin{tikzpicture}[ scale=1, nodo/.style={circle,draw=black!120,fill=white!120,inner sep=0pt,minimum size=5mm}]
     
\node[nodo] (1) at (0,2) {$1$};
\node[nodo] (2) at (1.4142,1.4142) {$2$};
\node[nodo] (3) at (2,0) {$3$};
\node[nodo] (4) at (1.4142,-1.4142) {$4$};
\node[nodo] (5) at (0,-2) {$5$};
\node[nodo] (6) at (-1.4142,-1.4142) {$6$};
\node[nodo] (7) at (-2,0) {$7$};
\node[nodo] (8) at (-1.4142,1.4142) {$8$};

\draw (2) to (3);
\draw (4) to (5);
\draw (6) to (7);
\draw (8) to (1);
\draw (1) to (5);
\draw (2) to (6);
\draw (3) to (7);
\draw (4) to (8);

\draw[dashed] (1) to (2);
\draw[dashed] (3) to (4);
\draw[dashed] (5) to (6);
\draw[dashed] (7) to (8);

\end{tikzpicture}
\\
$M_1$
\end{tabular}
\begin{tabular}{c}
\begin{tikzpicture}[ scale=1, nodo/.style={circle,draw=black!120,fill=white!120,inner sep=0pt,minimum size=5mm}]

\node[nodo] (1) at (0,2) {$1$};
\node[nodo] (2) at (1.4142,1.4142) {$2$};
\node[nodo] (3) at (2,0) {$3$};
\node[nodo] (4) at (1.4142,-1.4142) {$4$};
\node[nodo] (5) at (0,-2) {$5$};
\node[nodo] (6) at (-1.4142,-1.4142) {$6$};
\node[nodo] (7) at (-2,0) {$7$};
\node[nodo] (8) at (-1.4142,1.4142) {$8$};

\draw (1) to (2);
\draw (2) to (3);
\draw (4) to (5);
\draw (5) to (6);
\draw (6) to (7);
\draw (8) to (1);
\draw (4) to (8);
\draw (3) to (7);

\draw[dashed] (1) to (5);
\draw[dashed] (2) to (6);
\draw[dashed] (3) to (4);
\draw[dashed] (7) to (8);

\end{tikzpicture} %
\\
$M_2$
\end{tabular}
\begin{tabular}{c}
\begin{tikzpicture}[ scale=1, nodo/.style={circle,draw=black!120,fill=white!120,inner sep=0pt,minimum size=5mm}]

\node[nodo] (1) at (0,2) {$1$};
\node[nodo] (2) at (1.4142,1.4142) {$2$};
\node[nodo] (3) at (2,0) {$3$};
\node[nodo] (4) at (1.4142,-1.4142) {$4$};
\node[nodo] (5) at (0,-2) {$5$};
\node[nodo] (6) at (-1.4142,-1.4142) {$6$};
\node[nodo] (7) at (-2,0) {$7$};
\node[nodo] (8) at (-1.4142,1.4142) {$8$};

\draw[dashed] (1) to (5);
\draw[dashed] (2) to (6);
\draw[dashed] (3) to (7);
\draw[dashed] (4) to (8);

\draw (1) to (2);
\draw (2) to (3);
\draw (3) to (4);
\draw (4) to (5);
\draw (5) to (6);
\draw (6) to (7);
\draw (7) to (8);
\draw (8) to (1);

\end{tikzpicture} %
\\
$M_3$
\end{tabular}
}
\end{center}
\caption{Perfect matchings $M_1$, $M_2$, and $M_3$ of the Wagner graph $W$. The matchings consist of the dashed edges. 
\label{figure:W3}}
\end{figure}

\begin{figure}
\begin{center}
\scalebox{0.8}{
\begin{tabular}{c}
\begin{tikzpicture}[ scale=1, nodo/.style={circle,draw=black!120,fill=white!120,inner sep=0pt,minimum size=5mm}]

\node[nodo] (1) at (0,2) {$1$};
\node[nodo] (2) at (1.4142,1.4142) {$2$};
\node[nodo] (3) at (2,0) {$3$};
\node[nodo] (4) at (1.4142,-1.4142) {$4$};
\node[nodo] (5) at (0,-2) {$5$};
\node[nodo] (6) at (-1.4142,-1.4142) {$6$};
\node[nodo] (7) at (-2,0) {$7$};
\node[nodo] (8) at (-1.4142,1.4142) {$8$};

\draw[dashed] (1) to (6);
\draw[dashed] (2) to (5);
\draw[dashed] (3) to (8);
\draw[dashed] (4) to (7);

\draw (1) to (2);
\draw (2) to (3);
\draw (3) to (4);
\draw (4) to (5);
\draw (5) to (6);
\draw (6) to (7);
\draw (7) to (8);
\draw (8) to (1);

\end{tikzpicture} %
\\
$M_4$
\end{tabular}
\qquad\quad
\begin{tabular}{c}
\begin{tikzpicture}[ scale=1, nodo/.style={circle,draw=black!120,fill=white!120,inner sep=0pt,minimum size=5mm}]

\node[nodo] (1) at (0,2) {$1$};
\node[nodo] (2) at (1.4142,1.4142) {$2$};
\node[nodo] (3) at (2,0) {$3$};
\node[nodo] (4) at (1.4142,-1.4142) {$4$};
\node[nodo] (5) at (0,-2) {$5$};
\node[nodo] (6) at (-1.4142,-1.4142) {$6$};
\node[nodo] (7) at (-2,0) {$7$};
\node[nodo] (8) at (-1.4142,1.4142) {$8$};

\draw[dashed] (2) to (3);
\draw[dashed] (4) to (5);
\draw[dashed] (6) to (7);
\draw[dashed] (8) to (1);

\draw (1) to (2);
\draw (3) to (4);
\draw (5) to (6);
\draw (7) to (8);
\draw (1) to (6);
\draw (2) to (5);
\draw (3) to (8);
\draw (4) to (7);
\end{tikzpicture} %
\\
$M_5$
\end{tabular}
}
\end{center}
\caption{Perfect matchings $M_4$ and $M_5$ of the cube $Q$. The matchings consist of the dashed edges.\label{figure:Q2}}
\end{figure}
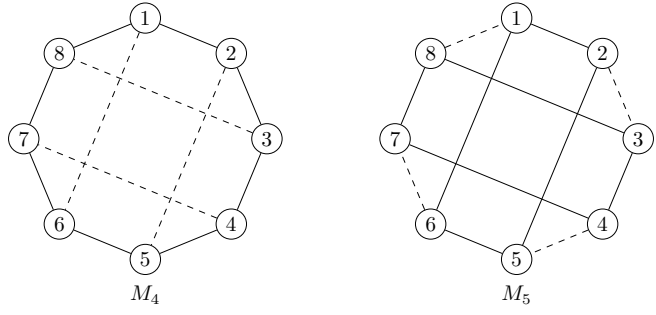

\begin{remark}\label{remark:Bussemake}
As first reported in~\cite{Bussemake1977}, there are, up to isomorphism, precisely five cubic graphs on 8 vertices. These five graphs are those depicted in Figures~\ref{figure:cubicosA}~and~\ref{figure:cubicosB}. On the one hand, both graphs in Figure~\ref{figure:cubicosA} (i.e., $W$ and $Q$) are min-$\mu_3$ because, as it is easy to verify, their only 3-edge-cuts are the $8$ vertex-separating ones. On the other hand, the graphs depicted in Figure~\ref{figure:cubicosB} are not min-$\mu_3$. In fact, each of $G_1$, $G_2$, and $G_3$ has, in addition to the $8$ vertex-separating $3$-edge-cuts, some other $3$-edge-cut; e.g., $\{23,78,15\}$, $\{23,78,14\}$, and $\{23,78,13\}$, respectively.
\end{remark}

\begin{remark}\label{remark:matchings}
The only perfect matchings of $W$, up to isomorphism, are the matchings $M_1$, $M_2$, and $M_3$ depicted in Figure~\ref{figure:W3}. The only perfect matchings of $Q$ up to isomorphism are the matchings $M_4$ and $M_5$ in Figure~\ref{figure:Q2}.
\end{remark}

As it will turn out (see Lemma~\ref{lemma:five-m3}), the min-$\mu_3$ graphs in $\mathcal C_{12s+4,12s+8}$ are certain enlarged graphs, a notion we define below.

\begin{definition}
Let $G$ be a cubic simple graph, $Y$ a set of edges of $G$, and $s$ a positive integer. The \emph{enlarged graph} $G_s^{Y}$ is the graph that arises from $G$ by subdividing $s-1$ times each edge in $Y$ and $s$ times each edge in $E(G)-Y$. 
\end{definition}
Notice that if, in addition, $G$ is 2-connected, then $G^Y_s$ arises from $G$ by replacing each edge in $Y$ by a chain of length $s$ and each edge not in $Y$ by one of length $s+1$.

We are in a position to give the list of all the min-$\mu_3$ graphs in $\mathcal{C}_{12s+4,12s+8}$. 

\begin{lemma}\label{lemma:five-m3}
Let $s$ be a positive integer. A graph $G$ in $\mathcal{C}_{12s+4,12s+8}$ is min-$\mu_3$ if and only if $G$ is one of the five enlarged graphs $W^{M_1}_{s}$, $W^{M_2}_{s}$, $W^{M_3}_{s}$, $Q^{M_4}_{s}$, and $Q^{M_5}_{s}$, where $M_1,\ldots,M_5$ are the perfect matchings depicted in Figures~\ref{figure:W3} and \ref{figure:Q2}.
\end{lemma}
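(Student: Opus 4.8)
The plan is to derive Lemma~\ref{lemma:five-m3} directly from Wang's characterization of min-$\mu_3$ graphs (Theorem~\ref{theorem:m3}) together with Remarks~\ref{remark:Bussemake} and~\ref{remark:matchings} on cubic graphs and perfect matchings on $8$ vertices. The first step is to check that Theorem~\ref{theorem:m3} applies and to identify its parameters. For $n=12s+4$ and $m=12s+8$ one has $n+2=12s+6\le m$ and $3n/2=18s+6\ge 12s+8$ (as $s\ge 1$), so the hypothesis $n+2\le m\le 3n/2$ holds; the corank is $c=m-n+1=5$, so $3c-3=12$, and the unique representation $m=(3c-3)s'+r$ with $r\in\{0,\dots,11\}$ is $m=12s+8$, i.e.\ $s'=s$ and $r=8$. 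In particular $r=2c-2$, so the relevant clause of Theorem~\ref{theorem:m3}(iii) is the one forcing the chains of length $s$ to form a matching of chains. Thus $G\in\mathcal C_{12s+4,12s+8}$ is min-$\mu_3$ if and only if: (i) $D(G)$ is simple, cubic, has $8$ vertices, is min-$\mu_3$ and satisfies $\lambda(D(G))=3$; (ii) $G$ has $8$ chains of length $s+1$ and $4$ chains of length $s$; and (iii) the $4$ chains of length $s$ form a matching of chains.

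The second step is to unpack these conditions. Since $D(G)$ is cubic on $8$ vertices it has $12$ edges, each corresponding to a distinct chain of $G$, and all $8$ of its vertices have degree $3>2$ in $G$. By (ii)--(iii) the $4$ chains of length $s$ are pairwise nonincident, so their $8$ endpoints are pairwise distinct and hence are exactly the $8$ vertices of $D(G)$; equivalently these chains form a \emph{perfect} matching of chains of $G$, corresponding to a perfect matching $M$ of $D(G)$, while the remaining $8$ chains, of length $s+1$, correspond to $E(D(G))-M$. Consequently $G=D(G)^M_s$. Now I invoke the two remarks. By Remark~\ref{remark:Bussemake}, the only cubic simple graphs on $8$ vertices are $W,Q,G_1,G_2,G_3$, and of these only $W$ and $Q$ are min-$\mu_3$; so condition (i) forces $D(G)\in\{W,Q\}$ (one checks directly that $\lambda(W)=\lambda(Q)=3$, so (i) indeed holds for both). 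By Remark~\ref{remark:matchings}, every perfect matching of $W$ is, up to an automorphism of $W$, one of $M_1,M_2,M_3$, and every perfect matching of $Q$ is, up to an automorphism of $Q$, one of $M_4,M_5$. Since an automorphism of the distillation carrying $M$ to $M'$ extends to an isomorphism $D(G)^M_s\cong D(G)^{M'}_s$, it follows that $G$ is isomorphic to one of $W^{M_1}_s,W^{M_2}_s,W^{M_3}_s,Q^{M_4}_s,Q^{M_5}_s$. This gives the forward implication.

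For the converse I will verify that each of the five enlarged graphs lies in $\mathcal C_{12s+4,12s+8}$ and satisfies (i)--(iii), so that Theorem~\ref{theorem:m3} yields that it is min-$\mu_3$. Take $G=W^{M_i}_s$ (the argument for $Q^{M_4}_s,Q^{M_5}_s$ is identical). It arises from $W$ by subdivisions, is $2$-connected (subdivision preserves $2$-connectivity), and a direct count gives $4s+8(s+1)=12s+8$ edges and $8+4(s-1)+8s=12s+4$ vertices. Its vertices of degree $>2$ are exactly the $8$ vertices of $W$, and collapsing all chains recovers $W$, so $D(G)=W$, which is simple, cubic, has $8$ vertices, is min-$\mu_3$ by Remark~\ref{remark:Bussemake}, and has $\lambda(W)=3$; this is (i). The $4$ edges of $M_i$ become chains of length $s$ and the other $8$ edges chains of length $s+1$, which is (ii); and since $M_i$ is a matching of $W$, those $4$ chains of length $s$ are pairwise nonincident, which is (iii).

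The proof is essentially bookkeeping around Theorem~\ref{theorem:m3}, and I do not expect a genuine obstacle. The two points that need care are (a) correctly pinning down the parameter $r=8=2c-2$, so that the right clause of condition (iii) is invoked (it is exactly the boundary case), and (b) the passage from ``$4$ chains of length $s$ forming a matching of chains'' to ``a perfect matching of $D(G)$'', which rests on the count that a $4$-edge matching of chains has $8$ endpoints while $D(G)$ has exactly $8$ vertices. (It is also easy, though not needed for the lemma as stated, to check that the five graphs are pairwise non-isomorphic, since their distillations differ and the automorphism orbits of the five matchings are distinct.)
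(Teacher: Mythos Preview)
Your argument is essentially the same as the paper's, but there is one genuine gap in the forward direction. Theorem~\ref{theorem:m3} has as a standing hypothesis that $G$ is a \emph{$2$-connected} simple graph; you verify the inequality $n+2\le m\le 3n/2$ and compute $c,r,s'$ correctly, but you never establish that a min-$\mu_3$ graph $G\in\mathcal C_{12s+4,12s+8}$ is $2$-connected before invoking the theorem. Your sentence ``Thus $G\in\mathcal C_{12s+4,12s+8}$ is min-$\mu_3$ if and only if: (i)\dots'' therefore overstates what Theorem~\ref{theorem:m3} gives you: a priori there could be a non-$2$-connected min-$\mu_3$ graph in the class, to which the theorem says nothing. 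The paper closes this gap via Theorem~\ref{theorem:strong}: since $m<3n/2$ one has $\delta(G)\le 2$ and hence $\lambda(G)\le 2$, so $3\in\{\lambda(G),\lambda(G)+1,\dots,m-n+1\}$, and Theorem~\ref{theorem:strong} then forces $G$ to be $2$-connected. Once you insert this step, your proof is correct.

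Two minor remarks. First, your treatment of the converse is actually more explicit than the paper's, which only writes out the forward implication; so that part is fine. Second, your identification $r=8=2c-2$ and the ensuing choice of clause in Theorem~\ref{theorem:m3}(iii) are correct, as is your observation that a matching of $4$ chains in a graph whose distillation has exactly $8$ vertices of degree $>2$ must be a \emph{perfect} matching of chains.
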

\begin{proof}
Let $G$ be a min-$\mu_3$ graph in $\mathcal{C}_{n,m}$, where $n=12s+4$ and $m=12s+8$. Since $s\geq 1$, $m<3n/2$. As a consequence, the minimum degree of $G$ is at most $2$ and, in particular, $\lambda(G)\leq 2$. Since $m>n$, $G$ is min-$\mu_3$, and $3 \in \{\lambda(G),\lambda(G)+1,\ldots,m-n+1\}$, Theorem~\ref{theorem:strong} ensures that $G$ is 2-connected. Hence, as the corank of $G$ is $c=5$ and $m=12s+8$, Theorem~\ref{theorem:m3} ensures that all the following assertions hold:
\begin{enumerate}[label=(\roman*)]
    \item\label{it1:int} $D(G)$ is a min-$\mu_3$ simple cubic graph on $8$ vertices;
    \item $G$ has $8$ chains of length $s+1$ and $4$ chains of length $s$; \label{it2:int}
    \item\label{it3:int} the $4$ chains of length $s$ in $G$ form a perfect matching of chains of $G$.
\end{enumerate}
Assertion~\ref{it1:int} and Remark~\ref{remark:Bussemake} imply that $D(G)$ is either $W$ or $Q$. Hence, assertions~\ref{it2:int} and~\ref{it3:int} and Remark~\ref{remark:matchings} imply that $G$ is isomorphic to
$W^{M_1}_{s}$, $W^{M_2}_{s}$, $W^{M_3}_{s}$, $Q^{M_4}_{s}$, or $Q^{M_5}_{s}$.
\end{proof}

\subsection{Minimization of \texorpdfstring{$\mu_4$}{mu4}} \label{subsection:minm4}

The main result of this subsection is Lemma~\ref{lemma:m4} that states $W^{M_1}_{s}$ is the unique graph that minimizes $\mu_4$ among the min-$\mu_3$ graphs in $\mathcal{C}_{12s+4,12s+8}$. Our strategy for proving this is as follows. By Lemma~\ref{lemma:five-m3}, all min-$\mu_3$ graphs in $\mathcal C_{12s+4,12s+8}$ are fair. Hence, Proposition~\ref{prop:mu^I} implies that, to establish Lemma~\ref{lemma:m4}, it suffices to prove that $W_s^{M_1}$ minimizes $\mu_4^{\mathrm I}$ over all min-$\mu_3$ graphs in $\mathcal C_{12s+4,12s+8}$. Lemmas~\ref{lemma:trivial-4-cuts-v} and~\ref{lemma:trivial-4-cuts-e} show that $\mu_4^{\mathrm V}(G)$ and $\mu_4^{\mathrm E}(G)$ are constant for all min-$\mu_3$ graphs $G$ in $\mathcal C_{12s+4,12s+8}$. Lemma~\ref{lemma:nontrivial-4-cuts} further shows that, among these same graphs, $\mu_4^{\mathrm N}(G)$ is minimized only when $G=W_s^{M_1}$. As $\mu_4^I(G)=\mu_4^{\mathrm V}(G)+\mu_4^{\mathrm E}(G)+\mu_4^{\mathrm N}(G)$, the main result of this subsection follows.

\begin{lemma}\label{lemma:trivial-4-cuts-v}
If $G \in \mathcal{C}_{12s+4,12s+8}$ and $G$ is min-$\mu_3$, then 
\begin{equation*}
  \mu_4^{\mathrm V}(G)=8s(s+1)^2(9s+6).
\end{equation*} 
\end{lemma}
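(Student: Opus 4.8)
The quantity $\mu_4^{\mathrm V}(G)$ counts the $4$-edge-cuts of $G$ that are induced by vertex-separating $4$-edge-cuts of $D(G)$. By Lemma~\ref{lemma:five-m3}, $D(G)$ is cubic on $8$ vertices, so in $D(G)$ a vertex-separating edge-cut of size $4$ is obtained by choosing a vertex $v$ (contributing its $3$ incident edges) together with one further edge $f$ of $D(G)$ not incident to $v$. The plan is therefore: (i) for each such pair $(v,f)$, count the number of $4$-edge-cuts of $G$ induced by the corresponding $4$ chains, which is the product of the four chain lengths; (ii) sum over all valid pairs $(v,f)$; and (iii) observe that, by Theorem~\ref{theorem:m3}\ref{it3:int}, the $4$ chains of length $s$ form a perfect matching of chains, which makes the resulting count the same for all five min-$\mu_3$ graphs, and then simplify to $8s(s+1)^2(9s+6)$.

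\textbf{Carrying this out.} Fix a min-$\mu_3$ graph $G$; write its distillation as a cubic graph $D$ on $8$ vertices with $12$ edges, exactly $4$ of which correspond to chains of length $s$ (forming a perfect matching $M$ of $D$) and $8$ of which correspond to chains of length $s+1$. For a vertex $v$ of $D$, let $F_v$ be the set of its three incident edges; exactly one edge of $M$ lies in $F_v$ (since $M$ is perfect), so the three chains meeting $v$ have length multiset $\{s,s+1,s+1\}$ and contribute a factor $s(s+1)^2$ each. Now I add a fourth chain: I must pick an edge $f\notin F_v$. The fourth chain has length $s$ if $f\in M\setminus F_v$ and length $s+1$ otherwise. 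Since $|M|=4$ and one edge of $M$ is in $F_v$, there are $3$ choices of $f$ in $M\setminus F_v$ and $12-3-3=6$ choices of $f$ among the non-matching edges not incident to $v$. Hence the number of Type-V $4$-edge-cuts associated to $v$ is
\begin{equation*}
s(s+1)^2\bigl(3\cdot s + 6\cdot(s+1)\bigr)=s(s+1)^2(9s+6).
\end{equation*}
Summing over all $8$ vertices $v$ of $D$ gives $\mu_4^{\mathrm V}(G)=8s(s+1)^2(9s+6)$, as claimed. One should check there is no double counting: a vertex-separating $4$-edge-cut $F_v\cup\{f\}$ determines $v$ uniquely as the unique vertex of $D$ whose three incident edges all lie in the cut (in a cubic graph on $\geq 5$ vertices, $F_v\cup\{f\}$ with $f\notin F_v$ separates only $\{v\}$, never another single vertex), so distinct pairs $(v,f)$ give distinct cuts of $D$, and distinct cuts of $D$ give disjoint families of induced cuts of $G$; within a fixed pair, the induced $4$-edge-cuts are in bijection with choices of one edge from each of the four chains, giving the product of lengths.

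\textbf{Main obstacle.} The computation itself is routine; the only real points requiring care are the combinatorial bookkeeping that makes the answer independent of which of the five graphs $W^{M_1}_s,\dots,Q^{M_5}_s$ we took — which is exactly where the perfect-matching property from Theorem~\ref{theorem:m3}\ref{it3:int} is used, guaranteeing that every vertex of $D$ is incident to exactly one short chain — and the verification that each vertex-separating $4$-edge-cut of $D$ arises from a unique vertex $v$, so that summing over vertices neither over- nor under-counts. Once these two facts are in hand, the formula follows by the elementary arithmetic above.
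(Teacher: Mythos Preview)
Your proof is correct and follows essentially the same approach as the paper: both identify the vertex-separating $4$-edge-cuts of the cubic distillation as $F_v\cup\{f\}$, use the perfect-matching structure to see that the three chains at $v$ contribute $s(s+1)^2$, and note that no such cut separates two vertices. The only cosmetic difference is that the paper obtains the factor $9s+6$ by subtracting the edges in chains incident to $v$ from the total, $12s+8-(s+2(s+1))$, whereas you split the fourth chain into the $3$ matching and $6$ non-matching options to get $3s+6(s+1)$.
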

\begin{proof}
By Lemma \ref{lemma:five-m3}, $G$ is $W^{M_1}_{s}$, $W^{M_2}_{s}$, $W^{M_3}_{s}$, $Q^{M_4}_{s}$, or $Q^{M_5}_{s}$. In particular, $D(G)$ is either $W$ or $Q$. Let $v$ be a vertex of $D(G)$. As $D(G)$ is cubic, the $4$-edge-cuts of $D(G)$ separating $v$ are those consisting precisely of the three edges incident to $v$ plus some edge nonincident to $v$. Thus, the $4$-edge-cuts of $G$ induced by some $4$-edge-cut of $D(G)$ separating $v$ are those consisting precisely of an edge from each chain incident to $v$ plus an edge from a chain nonincident to $v$ in $G$. Since $G$ is an enlarged graph of a cubic graph by a perfect matching, $v$ is incident precisely to $1$ chain of length $s$ and $2$ chains of length $s+1$. Thus, the number of $4$-edge-cuts of $G$ induced by a $4$-edge-cut of $D(G)$ that separates $v$ equals $s(s+1)^2(12s+8-(s+2(s+1)))=s(s+1)^2(9s+6)$. As $D(G)$ has precisely $8$ vertices and no $4$-edge-cut of $D(G)$ separates more than one vertex (because $D(G)$ is cubic), the lemma follows.\end{proof}

\begin{lemma}\label{lemma:trivial-4-cuts-e}
If $G \in \mathcal{C}_{12s+4,12s+8}$ and $G$ is min-$\mu_3$, then 
\begin{equation*}
  \mu_4^{\mathrm E}(G)=4(s+1)^4 + 8s^2(s+1)^2.
\end{equation*} 
\end{lemma}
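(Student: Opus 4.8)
The plan is to count Type-E $4$-edge-cuts of each min-$\mu_3$ graph $G$ by going over the edge-separating but not vertex-separating $4$-edge-cuts of $D(G)$, which is either $W$ or $Q$ by Lemma~\ref{lemma:five-m3}. Recall that a $4$-edge-cut $\{f_1,f_2,f_3,f_4\}$ of the cubic graph $D(G)$ is edge-separating if and only if it separates a pair $\{u,v\}$ of adjacent vertices, i.e., it consists precisely of the four edges with exactly one endpoint in $\{u,v\}$; since $D(G)$ is cubic, such a cut is automatically of size $4$ and is not vertex-separating. Thus, the edge-separating $4$-edge-cuts of $D(G)$ are in bijection with the edges $uv$ of $D(G)$, and for each such edge the four cut-edges are the remaining two edges at $u$ and the remaining two edges at $v$. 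Each of $W$ and $Q$ has $12$ edges, so there are $12$ edge-separating $4$-edge-cuts in $D(G)$, and for each one the number of $4$-edge-cuts of $G$ it induces is the product of the lengths of the four chains corresponding to those four edges.

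The next step is to compute, for each graph $G\in\{W^{M_1}_s,W^{M_2}_s,W^{M_3}_s,Q^{M_4}_s,Q^{M_5}_s\}$, the multiset of products $\prod \ell(\gamma_i)$ over the $12$ edges $uv$, where the four chains are the non-$uv$ chains at $u$ and at $v$; since $G$ is an enlargement by a perfect matching $M$, each vertex is incident to exactly one chain of length $s$ (the matching chain) and two of length $s+1$. For a fixed edge $uv$: if $uv\in M$, then the two other chains at $u$ and the two other chains at $v$ all have length $s+1$, contributing $(s+1)^4$; if $uv\notin M$, then among the two other chains at $u$ one has length $s$ (the $M$-chain at $u$) and one has length $s+1$, and similarly at $v$, contributing $s^2(s+1)^2$. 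Since $M$ is a perfect matching of a cubic graph on $8$ vertices, $|M|=4$, so there are $4$ edges of the first kind and $12-4=8$ of the second kind, giving
\begin{equation*}
  \mu_4^{\mathrm E}(G)=4(s+1)^4+8s^2(s+1)^2
\end{equation*}
for every min-$\mu_3$ graph $G$, as claimed. The key point making this uniform is that the relevant count depends only on the number of matching edges versus non-matching edges among the $12$ edges of $D(G)$, and this is $4$ versus $8$ regardless of which of the five graphs we chose; the particular combinatorial structure of $W$ versus $Q$ and the particular matching play no role here.

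The only mild subtlety is to confirm that no double counting occurs, i.e., that distinct edges $uv$ of $D(G)$ give distinct edge-separating $4$-edge-cuts, and that every edge-separating-but-not-vertex-separating $4$-edge-cut of $D(G)$ arises this way exactly once. This follows because in a cubic graph with $\lambda(D(G))=3$ (which holds by Lemma~\ref{lemma:five-m3}, as $D(G)\in\{W,Q\}$ and both have edge-connectivity $3$) an edge-separating $4$-edge-cut is forced to separate one of the two endpoints of a uniquely determined edge: the edge-cut separates $\{u,v\}$ and no vertex, so both $u$ and $v$ have a neighbor outside $\{u,v\}$ along a cut edge, forcing $\{u,v\}$ to be exactly the two vertices on the ``small side'' and hence $uv$ to be the unique edge inside that side. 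Once this bookkeeping is pinned down, the computation is the routine case split above, and there is no genuine obstacle; the main thing to be careful about is simply keeping the ``one $s$-chain, two $(s+1)$-chains per vertex'' bookkeeping straight when $uv\in M$ versus $uv\notin M$.
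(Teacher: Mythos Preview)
Your proof is correct and follows essentially the same approach as the paper: both invoke Lemma~\ref{lemma:five-m3} to reduce to the five enlarged graphs, observe that the edge-separating $4$-edge-cuts of the cubic distillation $D(G)$ are in bijection with the $12$ edges of $D(G)$, split into the cases $uv\in M$ (contributing $(s+1)^4$) and $uv\notin M$ (contributing $s^2(s+1)^2$), and count $4$ edges of the first kind and $8$ of the second. Your additional paragraph justifying the absence of double counting is more explicit than the paper's one-line remark that ``no $4$-edge-cut of $D(G)$ separates more than one edge,'' but the content is the same.
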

\begin{proof}
By Lemma \ref{lemma:five-m3}, $G$ is $W^{M_1}_{s}$, $W^{M_2}_{s}$, $W^{M_3}_{s}$, $Q^{M_4}_{s}$, or $Q^{M_5}_{s}$ and let $M$ be $M_1$, $M_2$, $M_3$, $M_4$, or $M_5$, respectively. In particular, $D(G)$ is either $W$ or $Q$. Let $e$ be an edge of $D(G)$. As $D(G)$ is cubic, the only $4$-edge-cut of $D(G)$ separating $e$ is the set $F$ consisting precisely of the four edges of $D(G)$ incident to $e$. Let $\gamma$ be the chain of $G$ corresponding to $e$. Thus, the $4$-edge-cuts of $G$ induced by $F$ are those consisting precisely of one edge of each chain of $G$ incident to $\gamma$. On the one hand, if $e\in M$, then the four chains incident to $\gamma$ in $G$ have length $s+1$, which means that the number of $4$-edge-cuts of $G$ induced by $F$ is $(s+1)^4$. On the other hand, if $e\notin M$, then, since $M$ is a perfect matching, two of the chains of $G$ incident to $\gamma$ have length $s$ and the other two have length $s+1$, implying that the number of $4$-edge-cuts of $G$ induced by $F$ is $s^2(s+1)^2$. Since $G$ has $4$ edges in $M$ and $8$ edges not in $M$ and no $4$-edge-cut of $D(G)$ separates more than one edge, the lemma follows.
\end{proof}

\begin{lemma}\label{lemma:nontrivial-4-cuts}
Let $s$ be a positive integer. 
The graph $W_s^{M_1}$ is the only graph that minimizes $\mu_4^{\mathrm N}$ among all the min-$\mu_3$ graphs in $\mathcal C_{12s+4,12s+8}$.
\end{lemma}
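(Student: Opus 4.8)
The plan is to compute $\mu_4^{\mathrm N}(G)$ explicitly for each of the five graphs listed in Lemma~\ref{lemma:five-m3} and then to compare the five values. Fix a positive integer $s$. By Definition~\ref{def:induced}, every Type-N $4$-edge-cut of such a graph $G$ is obtained by choosing one edge from each of the four chains of $G$ associated with a \emph{nontrivial} $4$-edge-cut of $D(G)$, where $D(G)\in\{W,Q\}$. Since $G$ is an enlarged graph of $D(G)$ by a perfect matching $M\in\{M_1,\ldots,M_5\}$, a chain of $G$ has length $s$ when its corresponding edge of $D(G)$ lies in $M$ and length $s+1$ otherwise. Hence
\[
\mu_4^{\mathrm N}(G)=\sum_{F}\ \prod_{f\in F}\ell(f),
\]
where $F$ ranges over the nontrivial $4$-edge-cuts of $D(G)$, and $\ell(f)=s$ if $f\in M$ and $\ell(f)=s+1$ if $f\notin M$. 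The argument thus has two combinatorial ingredients: (a) listing all nontrivial $4$-edge-cuts of $W$ and of $Q$; and (b) for each matching $M_i$, determining how many edges of each such cut belong to $M_i$.

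For step (a) I would prove the following structural statement: if $H$ is a cubic simple graph on $8$ vertices with $\lambda(H)=3$ and no triangles, then the nontrivial $4$-edge-cuts of $H$ are precisely the edge boundaries of the sets $S$ with $|S|=|V(H)\setminus S|=4$ such that $H[S]$ and $H[V(H)\setminus S]$ are both connected; moreover each such $H[S]$ is then a $4$-cycle. Indeed, if $F$ is a nontrivial $4$-edge-cut and $S$ is the vertex set of a component of $H-F$, then (since $H$ is cubic) the size of the boundary of $S$ has the same parity as $|S|$ and (since $\lambda(H)=3$) is at least $3$; it cannot be $3$, because every $3$-edge-cut of $W$ or $Q$ is vertex-separating by Remark~\ref{remark:Bussemake}, which would force $F$ to be vertex-separating as well; so the boundary of $S$ has size $4$ and equals $F$. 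A short degree count then rules out $|S|\in\{2,6\}$ (those give edge-separating cuts) and shows that the complementary side is connected too; and a connected graph on $4$ vertices with a $4$-element edge boundary has exactly four internal edges, hence is unicyclic, and triangle-freeness forces that cycle to have length $4$. Therefore the nontrivial $4$-edge-cuts of $W$ (respectively $Q$) are in bijection with the unordered pairs of vertex-disjoint $4$-cycles that partition the vertex set. Enumerating the $4$-cycles of $W$ and of $Q$ (the latter being exactly the six faces of the cube) shows that $W$ has exactly two such cuts, namely the two alternating halves $\{12,34,56,78\}$ and $\{23,45,67,81\}$ of its Hamiltonian $8$-cycle, and that $Q$ has exactly three, one for each pair of opposite faces.

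Step (b) is then a finite inspection of the matchings drawn in Figures~\ref{figure:W3} and~\ref{figure:Q2}, which by Remark~\ref{remark:matchings} exhaust the perfect matchings of $W$ and $Q$ up to isomorphism. Counting, for each $M_i$, the number of its edges lying in each nontrivial $4$-edge-cut of the corresponding distillation yields
\begin{gather*}
\mu_4^{\mathrm N}(W_s^{M_1})=s^4+(s+1)^4,\qquad
\mu_4^{\mathrm N}(W_s^{M_2})=s^2(s+1)^2+(s+1)^4,\\
\mu_4^{\mathrm N}(W_s^{M_3})=2(s+1)^4,\qquad
\mu_4^{\mathrm N}(Q_s^{M_4})=2s^2(s+1)^2+(s+1)^4,\\
\mu_4^{\mathrm N}(Q_s^{M_5})=s^4+2(s+1)^4.
\end{gather*}
Writing $a=s$ and $b=s+1$, we have $0<a<b$, hence $a^4<a^2b^2<b^4$, and it follows immediately that $a^4+b^4$ is strictly smaller than each of $a^2b^2+b^4$, $2b^4$, $2a^2b^2+b^4$, and $a^4+2b^4$. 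Thus $W_s^{M_1}$ is the unique graph minimizing $\mu_4^{\mathrm N}$ among the graphs of Lemma~\ref{lemma:five-m3}, as claimed.

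I expect step (a) to be the main obstacle: turning ``nontrivial $4$-edge-cut'' into ``balanced bond whose two sides are $4$-cycles'' requires the parity and connectivity argument above, and it is precisely here that $\lambda(W)=\lambda(Q)=3$ (through Remark~\ref{remark:Bussemake}) and the absence of triangles in $W$ and $Q$ are used; the subsequent enumeration of complementary $4$-cycle pairs, though elementary, is the step most prone to bookkeeping errors. Once the two short lists of nontrivial $4$-edge-cuts are available, step (b) and the final numerical comparison are routine.
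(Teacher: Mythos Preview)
Your proposal is correct and follows the same underlying idea as the paper: reduce the comparison of $\mu_4^{\mathrm N}$ to the nontrivial $4$-edge-cuts of $D(G)\in\{W,Q\}$ and evaluate the corresponding products of chain lengths. The execution differs in thoroughness. The paper simply asserts that $W$ has exactly the two nontrivial $4$-edge-cuts $M_1$ and $M_1'$ and, for the $W$-cases, argues by inequality that the contribution of $M_1$ is at least $s^4$ with equality only for $W_s^{M_1}$; for the $Q$-cases it does not enumerate all nontrivial cuts but merely exhibits two of them to obtain the lower bound $\mu_4^{\mathrm N}(G)\ge s^2(s+1)^2+(s+1)^4$, which already exceeds $s^4+(s+1)^4$. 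You instead prove a clean structural lemma (nontrivial $4$-edge-cuts of a triangle-free cubic $8$-vertex graph with $\lambda=3$ are exactly the boundaries of complementary $4$-cycles), list all such cuts (two for $W$, three for $Q$), and compute the five values of $\mu_4^{\mathrm N}$ exactly. Your route costs a bit more work in step~(a) but yields sharper information and a self-contained justification of the cut lists; the paper's route is quicker but relies on ad~hoc inspection.
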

\begin{proof}
Let $G$ be a min-$\mu_3$ graph in $\mathcal C_{12s+4,12s+8}$. By Lemma~\ref{lemma:five-m3}, $G$ is $W^{M_1}_{s}$, $W^{M_2}_{s}$, $W^{M_3}_{s}$, $Q^{M_4}_{s}$, or $Q^{M_5}_{s}$.

Suppose first that $D(G)=W$. Observe that $W$ has only two nontrivial 4-edge-cuts, namely, $M_1$ and $M_1'= \{23,45,67,81\}$. The Type-N $4$-edge-cuts of $G$ are those induced by $M_1$ or $M_1'$. On the one hand, all the chains of $G$ corresponding to edges of $M_1$ have length at least $s$ and they all have length $s$ only when $G=W_s^{M_1}$. On the other hand, in all the cases (i.e., $G$ equals $W_s^{M_1}$, $W_s^{M_2}$, or $W_s^{M_3}$), the four chains of $G$ corresponding to edges in $M_1'$ are of length $s+1$. We conclude that $\mu_4^{\mathrm N}(G)\geq s^4+(s+1)^4$, with equality only when $G=W_s^{M_1}$.

Suppose now that $D(G)=Q$. Notice that the following two are nontrivial 4-edge-cuts of $Q$: $F = \{12,56,38,47\}$ and $M_1$. Notice that if $G$ is either $Q_s^{M_4}$ or $G=Q_s^{M_5}$, then at least two chains of $G$ corresponding to edges of $F$ are of length $s+1$ and all the chains of $G$ corresponding to edges in $M_1$ are of length $s+1$. Thus, in all cases (i.e., $G$ equals $Q_s^{M_4}$ or $Q_s^{M_5}$), $\mu_4^{\mathrm N}(G)\geq s^2(s+1)^2+(s+1)^4>\mu_4^{\mathrm N}(W_s^{M_1})$. This completes the proof of the lemma.
\end{proof}

\begin{lemma}\label{lemma:m4} For each positive integer $s$, $W^{M_1}_{s}$ is the only graph that minimizes the number of 4-edge-cuts among all min-$\mu_3$ graphs in 
$\mathcal{C}_{12s+4,12s+8}$.
\end{lemma}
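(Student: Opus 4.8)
The plan is to assemble the ingredients already prepared in this subsection into a short argument; essentially no new work is required. First I would invoke Lemma~\ref{lemma:five-m3}: every min-$\mu_3$ graph $G$ in $\mathcal{C}_{12s+4,12s+8}$ is one of the five enlarged graphs $W^{M_1}_s$, $W^{M_2}_s$, $W^{M_3}_s$, $Q^{M_4}_s$, $Q^{M_5}_s$. Each of these is a \emph{fair} graph, since each is an enlargement of a cubic graph by a perfect matching and hence all of its chains have length $s$ or $s+1$; moreover each has exactly $t=12$ chains (the $8$ non-matching edges become chains of length $s+1$ and the $4$ matching edges become chains of length $s$), on $n=12s+4$ vertices and $m=12s+8>n$ edges. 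Let $\mathcal S$ be the set of all min-$\mu_3$ graphs in $\mathcal{C}_{12s+4,12s+8}$; it is nonempty (e.g.\ $W^{M_1}_s\in\mathcal S$) and satisfies the hypotheses of Proposition~\ref{prop:mu^I} with $k=4$ (note $2\le 4\le 12$). Consequently, as $G$ ranges over $\mathcal S$, the minimum of $\mu_4(G)$ is attained at exactly the same graphs where the minimum of $\mu_4^{\mathrm I}(G)$ is attained.

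Next I would use the decomposition $\mu_4^{\mathrm I}(G)=\mu_4^{\mathrm V}(G)+\mu_4^{\mathrm E}(G)+\mu_4^{\mathrm N}(G)$ from Definition~\ref{def:induced}. By Lemma~\ref{lemma:trivial-4-cuts-v} and Lemma~\ref{lemma:trivial-4-cuts-e}, the first two summands are \emph{constant} over $\mathcal S$, equal to $8s(s+1)^2(9s+6)$ and $4(s+1)^4+8s^2(s+1)^2$ respectively. Therefore, over $\mathcal S$, the function $\mu_4^{\mathrm I}$ differs from $\mu_4^{\mathrm N}$ by an additive constant, so a graph in $\mathcal S$ minimizes $\mu_4^{\mathrm I}$ if and only if it minimizes $\mu_4^{\mathrm N}$. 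Finally, Lemma~\ref{lemma:nontrivial-4-cuts} asserts that $W^{M_1}_s$ is the \emph{unique} minimizer of $\mu_4^{\mathrm N}$ over $\mathcal S$. Chaining the three statements — $\mu_4$-minimizers $=\mu_4^{\mathrm I}$-minimizers $=\mu_4^{\mathrm N}$-minimizers $=\{W^{M_1}_s\}$ — yields the lemma.

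As for the main obstacle: for the statement of Lemma~\ref{lemma:m4} itself there is essentially none, since all the difficulty has been front-loaded into Proposition~\ref{prop:mu^I} and Lemmas~\ref{lemma:trivial-4-cuts-v}--\ref{lemma:nontrivial-4-cuts}. The one point I would be careful to verify is that the hypotheses of Proposition~\ref{prop:mu^I} really do apply uniformly to all five graphs in $\mathcal S$ — in particular that they all have the \emph{same} chain count $t=12$ (so that the terms $\binom mk$ and $\Phi^{(k)}_{t}(m)$ appearing in~\eqref{eq:mk2} are genuinely the same constant for each of them) and that they are all fair — so that the variation of $\mu_4$ across $\mathcal S$ is isolated entirely in $\mu_4^{\mathrm I}$, and then, via the two constancy lemmas, entirely in $\mu_4^{\mathrm N}$. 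Conceptually, the deepest input being used is exactly this passage from the analytically defined $\mu_4$ to the purely combinatorial quantity $\mu_4^{\mathrm N}$ of nontrivial induced $4$-edge-cuts, which fairness makes possible.
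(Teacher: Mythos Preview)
Your proposal is correct and follows essentially the same route as the paper's proof: both invoke Lemma~\ref{lemma:five-m3} to identify $\mathcal S$ as a set of fair graphs with $12$ chains, apply Proposition~\ref{prop:mu^I} to reduce minimizing $\mu_4$ to minimizing $\mu_4^{\mathrm I}$, use Lemmas~\ref{lemma:trivial-4-cuts-v} and~\ref{lemma:trivial-4-cuts-e} to strip off the constant $\mu_4^{\mathrm V}$ and $\mu_4^{\mathrm E}$ terms, and conclude via Lemma~\ref{lemma:nontrivial-4-cuts}. Your write-up is more explicit about verifying the hypotheses of Proposition~\ref{prop:mu^I}, but the argument is the same.
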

\begin{proof}
Let $\mathcal S$ be the set of min-$\mu_3$ graphs in $\mathcal C_{12s+4,12s+8}$. By Lemma~\ref{lemma:five-m3}, $\mathcal S$ is a set of fair graphs having precisely $12$ chains each. Thus, by Proposition~\ref{prop:mu^I}, $\mu_4$ attains its minimum over $\mathcal S$ in the same graphs where $\mu_4^{\mathrm I}$ attains its minimum over $\mathcal S$. By Lemmas~\ref{lemma:trivial-4-cuts-v}~and~\ref{lemma:trivial-4-cuts-e}, the values $\mu_4^{\mathrm V}$ and $\mu_4^{\mathrm E}$ are constant over $\mathcal S$. Hence, as $\mu_4^{\mathrm I}=\mu_4^{\mathrm V}+\mu_4^{\mathrm E}+\mu_4^{\mathrm N}$, the result follows by Lemma~\ref{lemma:nontrivial-4-cuts}.
\end{proof}

\subsection{Local optimality near zero}\label{subsection:lmrg-final}
By combining Lemmas~\ref{lemma:five-m3} and~\ref{lemma:m4} with Theorem~\ref{theorem:optlocal} and Corollary~\ref{cor:wang}, we are able characterize the graph in $\mathcal C_{12s+4,12s+8}$ that is locally most reliable near $\rho=0$.

\begin{proposition}\label{proposition:rho0}
For each positive integer $s$, the graph $W^{M_1}_{s}$ is locally most reliable near $\rho=0$.
\end{proposition}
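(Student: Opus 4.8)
The plan is to chain together the structural results proved so far with the two parts of Theorem~\ref{theorem:optlocal}\ref{it1:optlocal}. Recall that $G=W^{M_1}_s$ lies in $\mathcal C_{n,m}$ with $n=12s+4$, $m=12s+8$, so the corank is $c=5$ and $m<3n/2$. I want to show that for every other simple graph $H\in\mathcal C_{n,m}$, there is an index $i$ with $\mu_k(G)=\mu_k(H)$ for all $k<i$ and $\mu_i(G)<\mu_i(H)$; then Theorem~\ref{theorem:optlocal}\ref{it1:optlocal} gives that $G$ is more reliable than $H$ near $\rho=0$, and since this holds for all such $H$, the graph $G$ is locally most reliable near $\rho=0$ by definition. (Uniqueness is automatic, as already remarked in Section~\ref{section:background}.) The low-order coefficients are controlled as follows: $\mu_0(G)=\mu_0(H)=0$ since both graphs are connected, and $\mu_1(G)=\mu_1(H)$ because a connected graph on $m$ edges and corank $c\ge 1$ has no bridges exactly when it is $2$-connected-ish — more precisely, $\mu_1$ counts bridges, and among graphs in $\mathcal C_{n,m}$ the minimum value of $\mu_1$ is $0$, attained precisely by the bridgeless graphs; I should argue that a locally-most-reliable analysis only needs to compare $G$ against graphs with $\mu_1(H)=0$, and if $\mu_1(H)>0$ we are already done at $i=1$.

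The heart of the argument is the cascade $\mu_2\to\mu_3\to\mu_4$. First I would invoke Theorem~\ref{theorem:Bauer} (with $c=5$, $m=12s+8=(3c-3)s+r$ forcing $r=8$, $s=s$): the min-$\mu_2$ graphs in $\mathcal C_{n,m}$ are exactly those whose distillation is simple cubic on $2c-2=8$ vertices with edge-connectivity $3$ and with $r=8$ chains of length $s+1$ and $3c-3-r=4$ chains of length $s$. In particular $G=W^{M_1}_s$ is min-$\mu_2$, so any $H$ that is not min-$\mu_2$ satisfies $\mu_2(G)<\mu_2(H)$ and we finish at $i=2$. Second, restricting to min-$\mu_2$ graphs $H$, I compare $\mu_3$: by Corollary~\ref{cor:wang} every min-$\mu_3$ graph is min-$\mu_2$, and conversely Theorem~\ref{theorem:m3} together with Remark~\ref{remark:Bussemake} and Lemma~\ref{lemma:five-m3} pin down the min-$\mu_3$ graphs as exactly the five enlarged graphs $W^{M_1}_s,W^{M_2}_s,W^{M_3}_s,Q^{M_4}_s,Q^{M_5}_s$, and $G$ is among them. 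Hence if $H$ is min-$\mu_2$ but not min-$\mu_3$, then $\mu_2(G)=\mu_2(H)$ and $\mu_3(G)<\mu_3(H)$, finishing at $i=3$. Finally, restricting to the five min-$\mu_3$ graphs, Lemma~\ref{lemma:m4} says $G=W^{M_1}_s$ is the unique minimizer of $\mu_4$ among them, so for the remaining four graphs $H$ we have $\mu_2(G)=\mu_2(H)$, $\mu_3(G)=\mu_3(H)$ (all min-$\mu_3$ graphs share the same $\mu_3$ value) and $\mu_4(G)<\mu_4(H)$, finishing at $i=4$.

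Putting the cases together: for an arbitrary $H\in\mathcal C_{n,m}$ with $H\not\cong G$, exactly one of the following occurs — $\mu_1(H)>0$ (finish at $i=1$), or $H$ is not min-$\mu_2$ (finish at $i=2$), or $H$ is min-$\mu_2$ but not min-$\mu_3$ (finish at $i=3$), or $H$ is one of the other four min-$\mu_3$ graphs (finish at $i=4$) — and in each case Theorem~\ref{theorem:optlocal}\ref{it1:optlocal} yields $R_G(\rho)>R_H(\rho)$ on some right neighbourhood of $0$. I expect the only genuinely delicate point to be the bookkeeping at the low end: one must check that $\mu_0$ and $\mu_1$ agree for all the graphs that survive into the $\mu_2$-comparison, i.e. that the min-$\mu_2$ graphs are connected and bridgeless — but this is immediate since Theorem~\ref{theorem:Bauer} forces $D(H)$ to have $\lambda(D(H))=3$ and $H$ is a subdivision of $D(H)$, so $\lambda(H)\ge 2$ and in particular $\mu_1(H)=0=\mu_1(G)$. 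With that observation the case analysis is exhaustive and the proposition follows.
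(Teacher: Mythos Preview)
Your proposal is correct and follows essentially the same route as the paper: establish that $W_s^{M_1}$ is min-$\mu_k$ for $k\le 3$, then use Lemma~\ref{lemma:m4} to break ties at $k=4$, and conclude via Theorem~\ref{theorem:optlocal}\ref{it1:optlocal}. The only cosmetic difference is that you invoke Theorem~\ref{theorem:Bauer} directly to get min-$\mu_2$, whereas the paper deduces min-$\mu_2$ from min-$\mu_3$ via Corollary~\ref{cor:wang}; both are valid, and your explicit case cascade is if anything a bit more transparent than the paper's compressed argument (one tiny omission: you restrict to $H\in\mathcal C_{n,m}$, but disconnected $H$ are handled trivially at $i=0$).
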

\begin{proof}
Clearly, $W_s^{M_1}$ is $2$-edge-connected; i.e., $\mu_0(W^{M_1}_{s})=\mu_1(W^{M_1}_{s})=0$. Thus, $W^{M_1}_{s}$ is min-$\mu_k$ when $k$ equals $0$ or $1$. By Lemma~\ref{lemma:five-m3}, $W_s^{M_1}$ is min-$\mu_3$. Moreover, reasoning as the proof of Lemma~\ref{lemma:five-m3}, $W_s^{M_1}$ satisfies the assumptions of Corollary~\ref{cor:wang} and, consequently, $W_s^{M_1}$ is also min-$\mu_2$. Furthermore, Lemma~\ref{lemma:m4} ensures that $W_s^{M_1}$ is the only graph that minimizes $\mu_4$ among the min-$\mu_3$ graphs. This proves that for any simple graph $H$ on $12s+4$ vertices and $12s+8$ edges different from $W^{M_1}_s$, there is some $i\in\{0,1,2,3,4\}$, such that $\mu_k(W^{M_1}_s)=\mu_k(H)$ for every $k\in\{0,1,\ldots,i-1\}$ and $\mu_i(W^{M_1}_s)<\mu_i(H)$. Therefore, the proposition follows by Theorem~\ref{theorem:optlocal}\ref{it1:optlocal}.
\end{proof}

\section{Proof of the main result}\label{section:main}

In this section we prove our main result, namely, Theorem~\ref{theorem:principal}, which asserts that there is no UMRG in $\mathcal C_{12s+4,12+8}$, for each positive integer $s$. For the purpose of establishing this result, we let $X=\{23,56,78,18\}$ (see Figure~\ref{figure:Wn}) and will show that the enlarged graph $W^X_s$ has fewer $5$-edge-cuts than $W^{M_1}_s$. By Proposition~\ref{prop:mu^I}, proving that $\mu_5({W^X_s})<\mu_5(W_s^{M_1})$ is equivalent to proving that $\mu_5^{\mathrm I}(W^X_s)<\mu_5^{\mathrm I}(W_s^{M_1})$. Before getting into the computation of the number of induced $5$-edge-cuts of $W^X_s$ and $W_s^{M_1}$, we prove a useful property of the $5$-edge-cuts of its common distillation $W$.

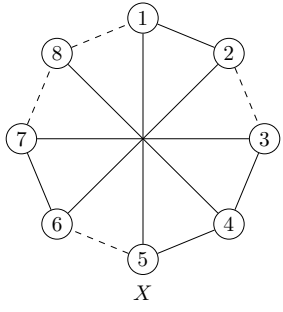
\begin{figure}
\begin{center}
\scalebox{0.8}{
\begin{tabular}{c}
\begin{tikzpicture}[ scale=1, nodo/.style={circle,draw=black!120,fill=white!120,inner sep=0pt,minimum size=5mm}]

\node[nodo] (1) at (0,2) {$1$};
\node[nodo] (2) at (1.4142,1.4142) {$2$};
\node[nodo] (3) at (2,0) {$3$};
\node[nodo] (4) at (1.4142,-1.4142) {$4$};
\node[nodo] (5) at (0,-2) {$5$};
\node[nodo] (6) at (-1.4142,-1.4142) {$6$};
\node[nodo] (7) at (-2,0) {$7$};
\node[nodo] (8) at (-1.4142,1.4142) {$8$};

\draw (1) to (2);
\draw (4) to (5);
\draw (6) to (7);
\draw (3) to (4);
\draw (1) to (5);
\draw (2) to (6);
\draw (3) to (7);
\draw (4) to (8);

\draw[dashed] (2) to (3);
\draw[dashed] (7) to (8);
\draw[dashed] (5) to (6);
\draw[dashed] (8) to (1);
\end{tikzpicture}
\\
$X$
\end{tabular}
}
\end{center}
\caption{The set $X$ consists of the dashed edges. \label{figure:Wn}}
\end{figure}

\begin{lemma}\label{lemma:count5}
Each nontrivial 5-edge-cut of $W$ is either $P_3$-separating or is $C_4$-separating.
\end{lemma}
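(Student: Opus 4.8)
The plan is to classify the nontrivial $5$-edge-cuts of the Wagner graph $W$ directly, using the fact that a nontrivial edge-cut of $W$ must separate the vertex set into two parts each of size at least $3$ (since $W$ is cubic, a part of size $1$ or $2$ would give a vertex-separating or edge-separating cut). Since $W$ has $8$ vertices, the only partitions of $V(W)$ into two parts of size at least $3$ are $3+5$ and $4+4$. Writing $[S,\bar S]$ for the set of edges with exactly one endpoint in $S$, and recalling that $W$ is cubic with $12$ edges, I would compute $|[S,\bar S]|$ in terms of the number of edges inside $S$: if $|S|=3$, then $|[S,\bar S]| = 9 - 2e(S)$, so $|[S,\bar S]|=5$ forces $e(S)=2$, i.e.\ $S$ induces a $P_3$; if $|S|=4$, then $|[S,\bar S]| = 12 - 2e(S)$, so $|[S,\bar S]|=5$ is impossible since $12-2e(S)$ is always even. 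Hence \emph{every} nontrivial $5$-edge-cut of $W$ is $P_3$-separating for one side of the partition.

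The remaining subtlety is the wording of the statement: a cut can be $P_3$-separating on the small side while the large ($5$-vertex) side induces something that is \emph{not} a $P_3$. The claim ``$P_3$-separating or $C_4$-separating'' must therefore be interpreted as: the cut separates \emph{some} induced $P_3$ or \emph{some} induced $C_4$. The $P_3$ case is already covered above. To finish, I would go back to the $3+5$ analysis: when $S$ is a $3$-vertex set inducing $P_3$, one should check whether the complementary $5$-vertex set $\bar S$ contains, or rather \emph{is} separated as, a set inducing $C_4$ — but in fact this is automatic only if one counts correctly. Actually the cleaner route is: a $P_3$-separating cut with $|S|=3$ has $\bar S$ of size $5$; inside $\bar S$ there are $e(\bar S) = 12 - 5 - 2 = 5$ edges on $5$ vertices, so $W[\bar S]$ is a connected unicyclic graph (a cycle plus pendant structure) and need not contain $C_4$. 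So the two cases genuinely do overlap with the same cuts sometimes being described either way, and I would simply verify by inspecting the (few) orbits of induced $P_3$'s in $W$ — $W$ is vertex- and edge-transitive, so there are only a couple of orbits of $P_3$'s to check — and confirm that for each such $P_3$, the induced cut $[S,\bar S]$ is also the cut separating some induced $C_4$ (namely one of the $4$-cycles of $W$), or else that the statement should just be read as the disjunction that every nontrivial $5$-cut is $P_3$-separating (which the parity argument already gives outright).

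The main obstacle I anticipate is bookkeeping rather than conceptual: enumerating the induced $P_3$'s of $W$ up to the automorphism group and, for each, identifying explicitly which induced subgraph on the other side the cut separates, so as to pin down exactly when the ``$C_4$-separating'' alternative is the operative description. Given the strong symmetry of $W$ (its automorphism group acts transitively on edges and on $P_3$'s of a fixed type), this reduces to checking one or two representative $P_3$'s by hand, e.g.\ the path $7{-}8{-}1$ with $S=\{7,8,1\}$, computing $[S,\bar S]=\{67,45,12,18,\text{?}\}$ — being careful with the chord $1{-}5$ — and matching it against the $4$-cycles of $W$. I would present the parity/counting argument as the core of the proof and dispatch the $C_4$ identification as a short case check enabled by the automorphisms of $W$.
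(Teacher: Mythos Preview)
Your argument has a real gap: you implicitly assume that every $5$-edge-cut $F$ of $W$ is a \emph{bond}, i.e.\ that $F=[S,\bar S]$ for some $S$ with $\lvert[S,\bar S]\rvert=5$. But in this paper an edge-cut is any disconnecting set, so a $5$-edge-cut may properly contain a smaller bond. Concretely, take $F=M_1\cup\{45\}=\{12,34,56,78,45\}$. Then $W-F$ has components $\{2,3,6,7\}$ (inducing $C_4$) and $\{1,4,5,8\}$; one checks directly that $F$ is neither vertex- nor edge-separating, and there is no $3$-set $S$ with $[S,\bar S]=F$ (the edge $45$ forces exactly one of $4,5$ into $S$, after which the remaining four edges of $F$ cannot all cross). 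So your conclusion that ``every nontrivial $5$-edge-cut of $W$ is $P_3$-separating'' is false, and the $C_4$-separating alternative is genuinely needed, not a redundant rephrasing.

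The fix is easy and keeps your structural approach intact: let $A$ be a smallest connected component of $W-F$. Then $[A,\bar A]\subseteq F$ and $F$ contains no edge of $W[A]$, so $F$ separates $A$. Nontriviality forces $\lvert A\rvert\ge 3$; minimality gives $\lvert A\rvert\le 4$. If $\lvert A\rvert=3$, your parity count gives $\lvert[A,\bar A]\rvert=9-2e(A)\le 5$, hence $e(A)=2$ (using that $W$ is triangle-free), $A$ induces $P_3$, and $F=[A,\bar A]$ is $P_3$-separating. If $\lvert A\rvert=4$, then $\lvert[A,\bar A]\rvert=12-2e(A)\le 5$ forces $e(A)\ge 4$; triangle-freeness and cubicity give $e(A)=4$ and $W[A]\cong C_4$, so $\lvert[A,\bar A]\rvert=4$ and $F$ consists of these four crossing edges plus one edge inside $\bar A$, i.e.\ $F$ is $C_4$-separating. (One should also note $e(A)\ge 5$ would give a $2$-edge-cut, impossible since $\lambda(W)=3$.)

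For comparison, the paper takes a quite different route: it explicitly lists four families of $5$-edge-cuts (vertex-, edge-, $P_3$-, and $C_4$-separating), counts them as $276+84+24+16=400$, and then certifies exhaustiveness by the identity $\mu_5(W)=\binom{12}{5}-t(W)=792-392=400$, using the known spanning-tree count of the Wagner graph. Your (corrected) structural argument is more direct and avoids the appeal to $t(W)$, which is a nice simplification; but as written, the parity step only handles bonds and misses precisely the $C_4$ case.
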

\begin{proof} 
We begin the proof by enumerating four families of $5$-edge-cuts of $W$. Then, we will argue that this list is exhaustive.
\begin{itemize}
    \item \emph{Vertex-separating cuts}. Let $v$ be a vertex of $W$. Since $W$ is a cubic graph, each 5-edge-cut that separates $v$ consists of the three edges incident to $v$ plus two additional edges. Thus, the number of $5$-edge-cuts of $W$ that separate $v$ equals $\binom 92$ (the number of pairs of edges nonincident to $v$).
    Notice that the $5$-edge-cuts of $W$ that separate at least two distinct vertices are those separating two vertices that are adjacent and consist of the five edges that are incident to at least one of these two vertices. By the inclusion-exclusion principle, the number of vertex-separating $5$-edge-cuts of $W$ is $8 \times \binom{9}{2}-12=276$.
    \item \emph{Edge-separating cuts.} Let $e$ be an edge of $W$. The $5$-edge-cuts of $W$ that separate $e$ consist of the four edges incident to $e$ plus an additional edge different from $e$. Thus, the number of $5$-edge-cuts separating $e$ are $12-4-1=7$. As no $5$-edge-cut of $W$ separates two distinct edges, the total number of edge-separating $5$-edge-cuts of $W$ is $12\times 7=84$.
    
    \item \emph{$P_3$-separating cuts.} Let $S$ be a set of vertices of $W$ inducing $P_3$. As $W$ is cubic, the only $5$-edge-cut of $W$ that separate $S$ is the set $F$ consisting of the five edges having exactly one endpoint in $S$. Notice that conversely, $F$ determines $S$ univocally. Thus, the number of $P_3$-separating $5$-edges-cuts of $W$ equals the number of sets $S$ inducing $P_3$ in $W$. As $W$ is triangle-free, this equals the number of pairs of incident edges. As $W$ is a cubic graph on $8$ vertices, this number is $8\times 3=24$.
    
    \item \emph{$C_4$-separating cuts.} Observe that $W$ has four induced subgraphs isomorphic to $C_4$ whose vertex sets are $D_1=\{1,2,6,5\}$, $D_2=\{2,3,7,6\}$, $D_3=\{3,4,8,7\}$, or $D_4=\{4,5,1,8\}$. For each $i\in\{1,2,3,4\}$, the $5$-edge-cuts of $W$ that separate $D_i$ are those consisting of the four edges of $W$ incident to exactly one vertex of $D_i$ plus an additional edge incident to no vertex of $D_i$. Thus, the number of $D_i$-separating $5$-edge-cuts of $W$ is $4$. As no $5$-edge-cut of $W$ can separate $D_i$ and $D_j$ for $i\neq j$, the total number of $C_4$-separating $5$-edge-cuts of $W$ is $4\times 4=16$.
\end{itemize}
Notice that no $5$-edge-cut of $W$ belongs to more than one of the four families of edge-cuts discussed above. Thus, we have presented $400$ distinct $5$-edge-cuts of $W$. Notice that, if one removes five edges from $W$, the result is either a disconnected graph or a tree. Hence, $\mu_5(W)=\binom{12}{5}-t(W)$, where $t(W)$ denotes the number of spanning trees of $W$. Moreover, it is known~\cite{1993-Biggs} that $t(W)=392$. Therefore, $\mu_5(W)=\binom{12}{5}-392=400$. This means that the set of $5$-edge-cuts discussed in the four families above exhaust all $5$-edge-cuts of $W$. As only the last two families contain nontrivial edge-cuts, the lemma follows.\end{proof}

Let $G$ be either $W^{M_1}_{s}$ or $W^X_s$. Recall that $\mu_5^{\mathrm V}(G)$ and $\mu_5^{\mathrm E}(G)$ denote the number of Type-V and Type-E 5-edge-cuts of $G$, respectively. Further, we will denote by $\mu_5^{P_3}(G)$ 
(respectively, $\mu_5^{C_4}(G)$) the number 5-edge-cuts of $G$ induced by $P_3$-separating (respectively, $C_4$-separating) edge-cuts of $D(G)$. Thus, by the above lemma,
\begin{equation}\label{eq:muI}
  \mu_5^{\mathrm I}(G)=\mu_5^{\mathrm V}(G)+\mu_5^{\mathrm E}(G)+\mu_5^{P_3}(G)+\mu_5^{C_4}(G).
\end{equation}

In Lemmas~\ref{lemma:node5} to \ref{lemma:match5} below, we determine 
$\mu_5^{\mathrm V}(G)$, $\mu_5^{\mathrm E}(G)$, $\mu_5^{P_3}(G)$ and $\mu_5^{C_4}(G)$, respectively, for both $G=W^{M_1}_s$ and $G=W^X_s$.

\begin{lemma}\label{lemma:node5}
For each positive integer $s$,
\begin{align*}
   \mu_5^{\mathrm V}(W^{M_1}_{s})&=276s^5+920s^4+1128s^3+600s^2+116s\quad\qquad\text{and}\\
   \mu_5^{\mathrm V}(W^X_{s})&=276s^5+920s^4+1149s^3+651s^2+156s+10.
\end{align*}
\end{lemma}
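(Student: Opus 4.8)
The plan is to count, for each of $G=W^{M_1}_s$ and $G=W^X_s$, the $5$-edge-cuts of $G$ that are induced by vertex-separating $5$-edge-cuts of $W$, relying on the enumeration of such cuts already carried out in the proof of Lemma~\ref{lemma:count5}. Recall from there that the vertex-separating $5$-edge-cuts of $W$ split into two kinds: (a) those separating exactly one vertex $v$, of which there are $8\times\binom 92$ before correcting for double counting, and (b) those separating two adjacent vertices $u,v$ (the five edges incident to at least one of $u,v$), of which there are $12$. I would count the induced $5$-edge-cuts of $G$ contributed by each kind separately and then subtract the overcount, exactly mirroring the inclusion--exclusion used for $W$ itself.

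First I would handle kind (a). Fix a vertex $v$ of $W=D(G)$; a vertex-separating $5$-edge-cut of $W$ separating $v$ consists of the three edges incident to $v$ together with two further edges $e,e'$, and the number of induced $5$-edge-cuts of $G$ it produces is $\bigl(\prod_{\gamma\ni v}\ell(\gamma)\bigr)\,\ell(\gamma_e)\,\ell(\gamma_{e'})$, where the first product runs over the three chains of $G$ incident to $v$. Summing over the $\binom 92$ choices of $\{e,e'\}$ among the nine edges not incident to $v$, and then over the $8$ vertices $v$, gives the bulk of the count. The point is that for an enlarged graph $W^Y_s$ the three chains at $v$ have lengths determined by how many of the edges of $W$ incident to $v$ lie in $Y$: for $Y=M_1$ (a perfect matching) every vertex meets exactly one short chain (length $s$) and two long ones (length $s+1$); for $Y=X$ the distribution varies from vertex to vertex because $X$ is not a matching, so one must split the $8$ vertices into the relevant types and tabulate the multiset of chain lengths incident to each. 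This per-vertex bookkeeping, together with the analogous per-edge length data needed for the $\ell(\gamma_e)\ell(\gamma_{e'})$ factors, is where the two formulas in the statement will differ.

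Then I would handle kind (b): for each of the $12$ edges $uv$ of $W$, count the induced $5$-edge-cuts of $G$ coming from the cut ``all edges incident to $u$ or $v$'', which is the product of the lengths of the five chains of $G$ incident to $\{u,v\}$ in $G$; again this product depends, for $W^X_s$, on how $X$ meets the neighborhood of the edge $uv$, whereas for $W^{M_1}_s$ one can argue more uniformly. Finally, following the inclusion--exclusion in Lemma~\ref{lemma:count5}, a $5$-edge-cut separating two adjacent vertices $u,v$ is counted once in the sum over $v=u$ and once in the sum over $v$; since it corresponds to choosing the two extra edges to be exactly the ``other two'' edges at the opposite endpoint, subtracting the kind-(b) total once corrects the overcount, and I would carry out exactly this subtraction. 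Assembling $8\times(\text{kind (a) sum})-(\text{kind (b) sum})$ and expanding as a polynomial in $s$ yields $\mu_5^{\mathrm V}(G)$ in each case, matching the two displayed quintics.

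The main obstacle is purely organizational: because $X=\{23,56,78,18\}$ is not a matching of $W$, the local structure around a vertex (and around an edge) of $W^X_s$ is not homogeneous, so the sums for $W^X_s$ require partitioning the $8$ vertices and the $12$ edges of $W$ into several classes according to their incidences with $X$ and then combining the per-class contributions correctly; keeping the $\binom 92$ pair-sums $\sum_{\{e,e'\}}\ell(\gamma_e)\ell(\gamma_{e'})$ straight for each vertex class (equivalently, computing $e_2$ of the appropriate length multiset) is the fiddly part. For $W^{M_1}_s$ the same computation is considerably cleaner thanks to the vertex-transitivity-like uniformity that a perfect matching provides. No genuinely new idea is needed beyond Definition~\ref{def:induced} and the cut enumeration of Lemma~\ref{lemma:count5}; it is a careful but routine polynomial computation.
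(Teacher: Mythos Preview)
Your proposal is correct and follows essentially the same approach as the paper's proof: inclusion--exclusion over the vertex-separating $5$-edge-cuts of $W$, computing per-vertex and per-edge products of chain lengths, treating $W^{M_1}_s$ uniformly via the perfect-matching structure and $W^X_s$ by case analysis on vertices (types for $v=4$, $v=8$, and the remaining six) and on edges. The only imprecision is the expression ``$8\times(\text{kind (a) sum})$'', which is apt for $W^{M_1}_s$ but must be replaced by the sum of the per-vertex contributions for $W^X_s$; you already note this in your discussion of the non-homogeneity of $X$.
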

\begin{proof}
Let us begin the proof by examining the $5$-edge-cuts $F$ of $W$ that separate a certain vertex $v$ in more detail. If follows from the proof of Lemma~\ref{lemma:count5} that these sets $F$ are precisely those consisting of the three edges incident to $v$ plus two additional edges nonincident to $v$. As among the edges of $W$ nonincident to $v$ there are $3$ edges in $M_1$ and $6$ edges not in $M_1$, of all possible $5$-edge-cuts of $W$ separating $v$ there are: (i) $\binom 32=3$ whose edges nonincident to $v$ are both in $M_1$; (ii) $3\times 6=18$ whose edges nonincident to $v$ are one in $M_1$ and one not in $M_1$, and; (iii) $\binom 62=15$ whose edges nonincident to $v$ are none in $M_1$.

Let us first consider $G=W^{M_1}_s$. Let $v$ be a vertex of its distillation $W$. As $v$ has degree $3$ in $W$ and $M_1$ is a perfect matching of $W$, $v$ is incident in $G$ to one chain of length $s$ and two chains of length $s+1$. Hence, if $F$ is a $5$-edge-cut of $W$ separating $v$, the number of $5$-edge-cuts of $G$ induced by $F$ is $s(s+1)^2\ell_1\ell_2$ where $\ell_1$ and $\ell_2$ are the lengths of the two chains of $G$ corresponding to the edges in $F$ nonincident to $v$. As each edge of $M_1$ corresponds to a chain of length $s$ in $G$ and each edge not in $M_1$ to a chain of length $s+1$ in $G$, the analysis in the preceding paragraph shows that the total number of $5$-edge-cuts of $G$ induced by $5$-edge-cuts of $W$ that separate $v$ is $s(s+1)^2(3s^2+18s(s+1)+15(s+1)^2)=36s^5+120s^4+147s^3+78s^2+15s$.

Recall from the proof of Lemma~\ref{lemma:count5} that the $5$-edges-cuts of $W$ separating two distinct vertices $v$ and $w$ are precisely those sets $F$ consisting of the edges incident to at least one of $v$ and $w$ for any edge $vw$ of $W$. As $v$ and $w$ have both degree $3$ in $W$ and $M_1$ is a perfect matching of $W$, the number of $5$-edge-cuts of $G$ induced by $F$ is $s(s+1)^4$ or $s^2(s+1)^3$, depending on whether $vw\in M_1$ or not, respectively. Since no $5$-edge-cut of $W$ can separate three distinct vertices, there number of $5$-edge-cuts of $G$ that separate two distinct vertices is $4s(s+1)^4+8s^2(s+1)^3=12s^5+40s^4+48s^3+24s^2+4s$.

By the inclusion-exclusion principle, we conclude that
\begin{align*}
   \mu_5^{\mathrm V}(W^{M_1}_{s})&=8\times(36s^5+120s^4+147s^3+78s^2+15s)-(12s^5+40s^4+48s^3+24s^2+4s)\\
                                 &=276s^5+920s^4+1128s^3+600s^2+116 s.
\end{align*}

Next, we study $G=W^X_{s}$ similarly. Let $v$ be a vertex of its distillation $W$. We consider different cases:
\begin{itemize}
\item If $v$ is neither $4$ nor $8$, then $v$ is incident in $G$ to one chain of length $s$ and two chains of length $s+1$. Hence, the reasoning used in the case where $G=W^{M_1}_s$ still applies to prove that the number of $5$-edge-cuts of $G$ induced by $5$-edge-cuts of $W$ that separate $v$ is $36s^5+120s^4+147s^3+78s^2+15s$. 

\item Since $4$ is incident in $G$ to three chains of length $s+1$ and among the chains nonincident to $4$ there are $4$ of length $s$ and $5$ of length $s+1$, the number of $5$-edge-cuts of $G$ induced by $5$-edge-cuts of $W$ that separate $4$ is $(s+1)^3\big(\binom 42s^2+4\times 5\times s(s+1)+\binom 52(s+1)^2\big)=36 s^5+148s^4+238s^3+186s^2+70s+10$.

\item Since $8$ is incident in $G$ to two chains of length $s$ and one chain of length $s+1$ and among the chains nonincident to $8$ there are $2$ of length $s$ and $7$ of length $s+1$, the number of $5$-edge-cuts of $G$ induced by $5$-edge-cuts of $W$ that separate $8$ is $s^2(s+1)\big(s^2+2\times 7\times s(s+1)+\binom 72(s+1)^2\big)=36s^5+92s^4+77s^3+21s^2$.
\end{itemize}

Recall that each $5$-edge-cut $F$ of $W$ separating two distinct vertices $v$ and $w$ consist precisely of all the edges incident to $v$ and $w$, where $vw$ is any edge of $W$. Thus, for each such set $F$, the $5$-edge-cuts of $G$ induced by $F$ are precisely those consisting of one edge from the chain $\gamma$ of $G$ corresponding to $vw$ and one edge from each of the four chains incident to $\gamma$. We consider different cases:
\begin{itemize}
\item If $vw$ is $23$ or $56$, then $\gamma$ has length $s$ and the four chains incident to it have length $s+1$. Hence, the number of $5$-edge-cuts induced by $F$ is $s(s+1)^4$.

\item If $vw$ is $18$ or $78$, then $\gamma$ has length $s$ and is incident to one chain of length $s$ and three chains of length $s+1$. Hence, the number of $5$-edge-cuts induced by $F$ is $s^2(s+1)^3$.

\item If $vw$ is $34$ or $45$, then $\gamma$ has length $s+1$ and is incident to one chain of length $s$ and three chains of length $s+1$. Hence, the number of $5$-edge-cuts induced by $F$ is $s(s+1)^4$.

\item If $vw$ is $12$, $15$, $26$, $37$, $48$, or $67$, then $\gamma$ has length $s+1$ and is incident to two chains of length $s$ and two chains of length $s+1$. Hence, the number of $5$-edge-cuts induced by $F$ is $s^2(s+1)^3$.
\end{itemize}
As no $5$-edge-cut of $W$ separates more than two distinct vertices, the total number of $5$-edge-cuts of $G$ induced by some $5$-edge-cut of $W$ separating two distinct vertices is $2s(s+1)^4+2s^2(s+1)^3+2s(s+1)^4+6s^2(s+1)^3=12s^5+40s^4+48s^3+24s^2+4s$.

Therefore, by the inclusion-exclusion principle, we conclude that
\begin{align*}
  \mu_5^{\mathrm V}(W^X_{s})&=6\times(36s^5+120s^4+147s^3+78s^2+15s)+(36 s^5+148s^4+238s^3+186s^2+70s+10)+\\
                            &\phantom{{}={}}+(36s^5+92s^4+77s^3+21s^2)-(12 s^5+40 s^4+48 s^3+24 s^2+4s)\\
                            &=276s^5+920s^4+1149s^3+651s^2+156s+10.\qedhere
\end{align*}
\end{proof}

\begin{lemma}\label{lemma:edge5}
For each positive integer $s$,
\begin{align*}
  \mu_5^{\mathrm E}(W^{M_1}_{s})&=84s^5+280s^4+368s^3+248s^2+92s+16\quad\qquad\text{and}\\
  \mu_5^{\mathrm E}(W^X_{s})&=84s^5+280s^4+356s^3+216s^2+64s+8.
\end{align*}
\end{lemma}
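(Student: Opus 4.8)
The plan is to mirror the computations already carried out for $\mu_4^{\mathrm E}$ in Lemma~\ref{lemma:trivial-4-cuts-e} and for $\mu_5^{\mathrm V}$ in Lemma~\ref{lemma:node5}. Both $W^{M_1}_s$ and $W^X_s$ have distillation $W$ (recall $W$ is cubic, so every subdivision vertex has degree~$2$). By Definition~\ref{def:induced}, the Type-E $5$-edge-cuts of such a graph $G$ are exactly the $5$-edge-cuts of $G$ induced by those $5$-edge-cuts of $W$ that are edge-separating but not vertex-separating; and, by the proof of Lemma~\ref{lemma:count5}, for $W$ the latter are precisely the $84$ sets $N(e)\cup\{e'\}$, where $e$ ranges over $E(W)$, the set $N(e)$ consists of the four edges of $W$ incident to $e$, and $e'$ ranges over the seven edges of $W$ that are neither equal nor incident to $e$; moreover these $84$ sets are pairwise distinct. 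Since each edge of $G$ belongs to exactly one chain, the edge-separating cut of $W$ from which a given Type-E $5$-edge-cut of $G$ arises is uniquely determined, so no Type-E cut is counted twice, and the number of $5$-edge-cuts of $G$ induced by $N(e)\cup\{e'\}$ equals the product of the lengths in $G$ of the five chains corresponding to the five edges of $N(e)\cup\{e'\}$ (exactly as in the proof of Lemma~\ref{lemma:trivial-4-cuts-e}). Summing first over $e'$ and then over $e$ gives
\[ \mu_5^{\mathrm E}(G)=\sum_{e\in E(W)}\pi_G(e)\,\sigma_G(e), \]
where $\pi_G(e)$ is the product of the lengths in $G$ of the chains corresponding to the edges of $N(e)$, and $\sigma_G(e)$ is the sum of the lengths in $G$ of the chains corresponding to the seven edges of $W$ neither equal nor incident to $e$.

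Next I would evaluate $\pi_G$ and $\sigma_G$ for $G=W^{M_1}_s$, where the answer depends only on whether $e\in M_1$ because $M_1$ is a perfect matching. If $e\in M_1$, then $N(e)\cap M_1=\varnothing$ while the three edges of $M_1$ other than $e$ are among the seven edges counted by $\sigma_G(e)$, so $\pi_G(e)=(s+1)^4$ and $\sigma_G(e)=3s+4(s+1)=7s+4$. If $e\notin M_1$, then $N(e)$ contains exactly the two edges of $M_1$ meeting the endpoints of $e$ and the remaining two edges of $M_1$ are among the seven counted by $\sigma_G(e)$, so $\pi_G(e)=s^2(s+1)^2$ and $\sigma_G(e)=2s+5(s+1)=7s+5$. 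Since $|M_1|=4$ and $|E(W)\setminus M_1|=8$, this yields $\mu_5^{\mathrm E}(W^{M_1}_s)=4(s+1)^4(7s+4)+8s^2(s+1)^2(7s+5)$, which expands to the first claimed polynomial.

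For $G=W^X_s$ there is no matching structure to exploit: $X$ is not a matching, since $78$ and $81$ share vertex $8$, and vertex $4$ lies on no edge of $X$. Hence $\pi_G(e)$ and $\sigma_G(e)$ must be read off edge by edge: for each $e$ one records $j=|N(e)\cap X|$, giving $\pi_G(e)=s^{j}(s+1)^{4-j}$, together with the number of the seven non-$e$, non-incident edges lying in $X$, which fixes $\sigma_G(e)$. This case analysis is halved by noting that the automorphism of $W$ exchanging $1\leftrightarrow 7$, $2\leftrightarrow 6$, $3\leftrightarrow 5$ and fixing $4$ and $8$ preserves $X$ and hence induces an automorphism of $W^X_s$; thus $\pi_G$ and $\sigma_G$ are constant on its seven orbits on $E(W)$, namely $\{23,56\}$, $\{34,45\}$, $\{78,81\}$, $\{12,67\}$, $\{15,37\}$, $\{48\}$, $\{26\}$. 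Checking one representative of each (for instance $23$, $34$, $78$, $12$, $15$, $48$, $26$) should give $j=0$ on $\{23,56\}$ with $\sigma_G=7s+4$; $j=1$ on $\{34,45\}$ with $\sigma_G=7s+4$; $j=1$ on $\{78,81\}$ with $\sigma_G=7s+5$; and $j=2$ on the remaining six edges with $\sigma_G=7s+5$. Summing over orbits,
\[ \mu_5^{\mathrm E}(W^X_s)=2(s+1)^4(7s+4)+2s(s+1)^3(7s+4)+2s(s+1)^3(7s+5)+6s^2(s+1)^2(7s+5), \]
which expands to the second claimed polynomial.

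The only genuine obstacle is this per-edge bookkeeping for $W^X_s$ — tracking, for each of the twelve edges $e$ of $W$, the intersections with $X$ of $N(e)$ and of the complementary seven-edge set; the automorphism fixing $X$ cuts the labor roughly in half, and the case analysis, though unavoidable, is entirely elementary. Everything else is a direct transcription of the argument already used for $\mu_4^{\mathrm E}$ and $\mu_5^{\mathrm V}$.
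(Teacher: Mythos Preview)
Your proposal is correct and follows essentially the same approach as the paper: both compute $\mu_5^{\mathrm E}(G)=\sum_{e\in E(W)}\pi_G(e)\,\sigma_G(e)$ by a case analysis on the edges of $W$, arriving at exactly the same four-term sum for $W^X_s$. The one cosmetic difference is that you explicitly invoke the involution $1\leftrightarrow 7$, $2\leftrightarrow 6$, $3\leftrightarrow 5$ of $W$ fixing $X$ to justify the pairing of edges, whereas the paper simply lists the same groupings without naming the symmetry; this is a nice explanatory touch but not a different argument.
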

\begin{proof}
Let us consider first $G=W^{M_1}_s$. Recall from the proof of Lemma~\ref{lemma:count5} that each $5$-edge-cut $F$ of $W$ separating an edge $e$ consists precisely of the four edges incident to $e$ and an additional edge nonincident to $e$. Thus, the $5$-edge-cuts of $G$ induced by $F$ consist of one edge of each of the chains incident in $G$ to the chain $\gamma$ corresponding to $e$ and one edge from any of the chains of $G$ that are nonincident to $\gamma$. One the one hand, if $e\in M_1$, then the four chains incident to $\gamma$ have length $s+1$ and the chains nonincident to $\gamma$ have $12s+8-(s+4(s+1))=7s+4$ edges in total. Thus, the number of $5$-edge-cuts of $G$ induced by $5$-edge-cuts of $W$ separating $e$ is $(s+1)^4(7s+4)$. On the other hand, if $e\notin M_1$, then $\gamma$ is incident to two chains of length $s$ and two chains of lengths $s+1$ and the number of edges in chains nonincident to $\gamma$ is $12s+8-(2s+3(s+1))=7s+5$. Hence, the number of $5$-edge-cuts of $G$ induced by $5$-edge-cuts of $W$ separating $e$ is $s^2(s+1)^2(7s+5)$. As $W$ has $4$ edges in $M_1$ and $8$ not in $M_1$ and no $5$-edge-cut of $W$ separates two distinct edges, the total number of $5$-edges-cuts of $G$ induced by edge-separating $5$-edge-cuts of $W$ is $4(s+1)^4(7s+4)+8s^2(s+1)^2(7s+5)=84s^5+280s^4+368s^3+248s^2+92s+16$. As no edge-separating $5$-edge-cut of $W$ is vertex-separating, this is also the value of $\mu_5^{\mathrm E}(W^{M_1}_{s})$.

Next, we study $G=W^X_s$ in a similar fashion. Let $e$ be an edge of $W$ and let $\gamma$ be the chain of $G$ corresponding to $e$. We consider different cases:
\begin{itemize}
 \item If $e$ is $23$ or $56$, then $\gamma$ has length $s$, $\gamma$ is incident to four chains of length $s+1$, and the number of edges in chains nonincident to $\gamma$ is $12s+8-(s+4(s+1))=7s+4$.
 \item If $e$ is $18$ or $78$, then $\gamma$ has length $s$, $\gamma$ is incident to one chain of length $s$ and three chains of length $s+1$, and the number of edges in chains nonincident to $\gamma$ is $12s+8-(2s+3(s+1))=7s+5$.
 \item If $e$ is $34$ or $45$, then $\gamma$ has length $s+1$, $\gamma$ is incident one chain of length $s$ and three chains of length $s+1$, and the number of edges in chains nonincident to $\gamma$ is $12s+8-(s+4(s+1))=7s+4$.
 \item If $e$ is $12$, $15$, $26$, $37$, $48$, or $67$, then $\gamma$ has length $s+1$, $\gamma$ is incident to two chains of length $s$ and two chains of length $s+1$, and the number of edges in chains nonincident to $\gamma$ is $12s+8-(2s+3(s+1))=7s+5$.
\end{itemize}
We conclude the number of $5$-edge-cuts of $G$ induced by some edge-separating $5$-edge-cut of $W$ is $2(s+1)^4(7s+4)+2s(s+1)^3(7s+5)+2s(s+1)^3(7s+4)+6s^2(s+1)^2(7s+5)=84s^5+280s^4+356s^3+216s^2+64s+8$.
As no edge-separating $5$-edge-cut of $W$ is vertex-separating, this is also the value of $\mu_5^{\mathrm E}(W^X_{s})$.
\end{proof}

\begin{lemma}\label{lemma:path5}
For each positive integer $s$,
\begin{align*}
\mu_5^{P_3}(W^{M_1}_{s})&=24s^5+80s^4+104s^3+64s^2+16s\quad\qquad\text{and}\\
\mu_5^{P_3}(W^X_s)&=24s^5+80s^4+105s^3+69s^2+23s+3.
\end{align*}
\end{lemma}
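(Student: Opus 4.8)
\emph{Proof proposal.} The plan is to count, separately for $G=W^{M_1}_s$ and $G=W^X_s$, the $5$-edge-cuts of $G$ induced by $P_3$-separating $5$-edge-cuts of $W=D(G)$, proceeding exactly as in the proofs of Lemmas~\ref{lemma:node5} and~\ref{lemma:edge5}: enumerate the $P_3$-separating $5$-edge-cuts of $W$ and, for each one, multiply together the lengths of the five chains of $G$ that its edges expand into.

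More precisely, the proof of Lemma~\ref{lemma:count5} shows that the $P_3$-separating $5$-edge-cuts of $W$ are exactly the sets $F_S$ of edges having precisely one endpoint in $S$, as $S$ ranges over the $24$ vertex sets inducing a $P_3$ in $W$; since $W$ is cubic and triangle-free, each such $S$ consists of a center vertex $v$ together with two of its three neighbours, and $F_S$ is made up of the single edge leaving $v$ plus the two edges leaving each of the two endpoints. For $G\in\{W^{M_1}_s,W^X_s\}$ each edge $e$ of $W$ expands into a chain of $G$ of length $s$ if $e$ belongs to $M_1$ (resp.\ $X$) and of length $s+1$ otherwise, so by Definition~\ref{def:induced} the number of $5$-edge-cuts of $G$ induced by $F_S$ equals the product of the lengths of the five chains corresponding to the edges of $F_S$, namely $s^{j_S}(s+1)^{5-j_S}$ with $j_S=\lvert F_S\cap M_1\rvert$ (resp.\ $\lvert F_S\cap X\rvert$). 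Distinct sets $S$ induce disjoint families of $5$-edge-cuts of $G$, since $F_S$ — and hence $S$ — can be recovered from any cut it induces; hence $\mu_5^{P_3}(G)=\sum_{j=0}^{5} n_j\,s^{j}(s+1)^{5-j}$, where $n_j$ (depending on $G$) counts the $S$ with $j_S=j$ and $\sum_j n_j=24$. It thus remains to determine the numbers $n_j$ in each case.

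For $G=W^{M_1}_s$ I would use that $M_1$ is a perfect matching of $W$: the vertices of $S$ matched inside $S$ come in pairs and $\lvert S\rvert=3$, so $j_S=\lvert F_S\cap M_1\rvert$ is always odd, and it is at most $3$ because at most one matching edge leaves each of the three vertices of $S$. Moreover $j_S=3$ forces the matching partner of the center $v$ to be precisely the neighbour of $v$ not in $S$ (which in turn makes both endpoints matched outside $S$), and conversely; there is exactly one such $S$ per vertex of $W$, so $n_3=8$ and, by the parity, $n_1=16$ (all other $n_j$ vanishing). Substituting yields $\mu_5^{P_3}(W^{M_1}_s)=16\,s(s+1)^4+8\,s^3(s+1)^2$, which expands to the stated polynomial.

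For $G=W^X_s$ this shortcut is unavailable, because $X$ is not a matching (vertex $8$ lies on two edges of $X$ and vertex $4$ on none), and I expect the real work of the lemma to be precisely this case: one must go through the $24$ induced $P_3$'s of $W$, write down the five edges of each cut $F_S$, and tally how many of them belong to $X$. I would halve this bookkeeping using the automorphism of $W$ that fixes $X$ setwise — the reflection interchanging $1\leftrightarrow 7$, $2\leftrightarrow 6$, $3\leftrightarrow 5$ and fixing $4$ and $8$ — after which a short table gives $n_0=3$, $n_1=8$, $n_2=7$, $n_3=6$ and $n_4=n_5=0$. Substituting, $\mu_5^{P_3}(W^X_s)=3(s+1)^5+8\,s(s+1)^4+7\,s^2(s+1)^3+6\,s^3(s+1)^2$, which expands to the claimed polynomial and completes the proof.
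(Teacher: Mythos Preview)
Your proposal is correct and follows essentially the same approach as the paper: enumerate the $24$ induced $P_3$'s of $W$, classify each by how many of the five boundary edges lie in $M_1$ (resp.\ $X$), and sum the resulting products of chain lengths, arriving at the identical distributions $(n_1,n_3)=(16,8)$ for $M_1$ and $(n_0,n_1,n_2,n_3)=(3,8,7,6)$ for $X$. Your parity argument for the $M_1$ case and the reflection $1\leftrightarrow 7$, $2\leftrightarrow 6$, $3\leftrightarrow 5$ for the $X$ case are tidy shortcuts, but they streamline rather than replace the paper's direct enumeration.
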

\begin{proof}
Recall from the proof of Lemma~\ref{lemma:count5} that the $P_3$-separating $5$-edge-cuts $F$ of $W$ consist of the edges of $W$ sharing exactly one endpoint with at least one of $e_1$ and $e_2$, where $e_1$ and $e_2$ are any two incident edges of $W$. Moreover, $F$ determines $\{e_1,e_2\}$ univocally. Thus, if $G$ is a graph such that $D(G)=W$, then the number of $5$-edge-cuts induced by $F$ equals the product of the lengths of the five chains of $G$ having precisely one endpoint in common with $e_1$ or $e_2$.

Let us consider first $G=W^{M_1}_s$. Let $e_1$ and $e_2$ be two incident edges of $W$ and let $F$ be as in the preceding paragraph. On the one hand, if one of $e_1$ and $e_2$ belongs to $M_1$, then the other does not and, of the five chains of $G$ sharing exactly one endpoint with at least one of $e_1$ or $e_2$, there is one of length $s$ and four of length $s+1$. Thus, in this case, the number of $5$-edge-cuts of $G$ induced by $F$ is $s(s+1)^4$. On the other hand, if neither $e_1$ nor $e_2$ belongs to $M_1$, then, of the five chains sharing exactly one endpoint with at least one of $e_1$ or $e_2$, there are three of length $s$ and two of length $s+1$. Hence, in this case, the number of $5$-edge-cuts of $G$ induced by $F$ is $s^3(s+1)^2$. As $W$ has $16$ pairs of incident edges where one of them belongs to $M_1$ and $8$ pairs of incident edges such that none of them belongs to $M_1$, we conclude that $\mu_5^{P_3}(W^{M_1}_{s})=16s(s+1)^4+8s^3(s+1)^2=24s^5+80s^4+104s^3+64s^2+16s$.

Next, we analyze $W^X_s$ in similar way. Let $e_1$ and $e_2$ be two incident edges of $G$, let $F$ be as in the first paragraph of this proof, and let $H$ be the family of chains corresponding to the edges in $F$ (i.e., the chains of $G$ that share exactly one endpoint with at least one of $e_1$ and $e_2$). Precisely one of the following cases holds:
\begin{itemize}
 \item If $\{e_1,e_2\}$ is $\{23,34\}$, $\{45,56\}$, or $\{78,81\}$, then $H$ consists of five chains of length $s+1$.
 
 \item If $\{e_1,e_2\}$ is $\{12,23\}$, $\{56,67\}$, $\{23,37\}$, $\{32,26\}$, $\{56,62\}$, $\{65,51\}$, $\{78,84\}$, or $\{18,84\}$, then $H$ consists of $1$ chain of length $s$ and $4$ of length $s+1$.
 
 \item If $\{e_1,e_2\}$ is $\{34,45\}$, $\{67,78\}$, $\{81,12\}$, $\{43,37\}$, $\{45,51\}$, $\{87,73\}$, or $\{81,15\}$, then $H$ consists of $2$ chains of length $s$ and $3$ of length $s+1$.
 
 \item If $\{e_1,e_2\}$ is $\{12,26\}$, $\{21,15\}$, $\{34,48\}$, $\{54,48\}$, $\{67,73\}$, or $\{76,62\}$, then $H$ consists of $3$ chains of length $s$ and $2$ of length $s+1$.
\end{itemize}
As the number of $5$-edge-cuts of $G$ induced by $F$ equals the product of the lengths of the chains in $H$, it follows that $\mu_5^{P_3}(W^X_s)=3(s+1)^5+8s(s+1)^4+7s^2(s+1)^3+6s^3(s+1)^2=24s^5+80s^4+105s^3+69s^2+23s+3$.
\end{proof}

\begin{lemma}\label{lemma:match5}
For each positive integer $s$,
\begin{align}
\mu_5^{C_4}(W^{M_1}_{s})&=16s^5+44 s^4+64 s^3+56 s^2+24s+4\quad\qquad\text{and}\\
\mu_5^{C_4}(W^X_s) &=16s^5+44s^4+40s^3+12 s^2
\end{align}

\end{lemma}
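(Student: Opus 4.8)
The plan is to build directly on the analysis of the $C_4$-separating $5$-edge-cuts of $W$ carried out in the proof of Lemma~\ref{lemma:count5}. Recall from there that $W$ has exactly four induced copies of $C_4$, with vertex sets $D_1=\{1,2,6,5\}$, $D_2=\{2,3,7,6\}$, $D_3=\{3,4,8,7\}$, and $D_4=\{4,5,1,8\}$, and that for each $i$ the set $V(W)\setminus D_i$ also induces a $C_4$ in $W$ (in fact $D_3$ is the complement of $D_1$ and $D_4$ is the complement of $D_2$). For each $i$, let $B_i$ be the set of the four edges of $W$ having exactly one endpoint in $D_i$, and let $A_i=E(W[V(W)\setminus D_i])$ be the set of the four edges of $W$ having no endpoint in $D_i$; then the $D_i$-separating $5$-edge-cuts of $W$ are precisely the sets $B_i\cup\{e\}$ with $e\in A_i$. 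Since these $C_4$-separating cuts are pairwise distinct, none of them is vertex- or edge-separating, and no $5$-edge-cut of $W$ separates two of the $D_i$, for $G\in\{W^{M_1}_s,W^X_s\}$, and using that a $C_4$-separating cut $F$ of $W=D(G)$ induces exactly $\prod_{f\in F}\ell(\gamma_f)$ distinct $5$-edge-cuts of $G$ (where $\gamma_f$ denotes the chain of $G$ corresponding to $f$), we obtain
\[
  \mu_5^{C_4}(G)=\sum_{i=1}^{4}\Bigl(\prod_{e\in B_i}\ell(\gamma_e)\Bigr)\Bigl(\sum_{e\in A_i}\ell(\gamma_e)\Bigr).
\]

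It then only remains to count, for each $i$, how many edges of $B_i$ and how many edges of $A_i$ correspond to chains of length $s$ versus length $s+1$. For $G=W^{M_1}_s$ the chains of length $s$ are exactly those corresponding to the edges of the perfect matching $M_1=\{12,34,56,78\}$; a direct inspection, which can be shortened by noting that the rotation $i\mapsto i+2$ is an automorphism of $W$ fixing $M_1$ and swapping $D_1\leftrightarrow D_3$ and $D_2\leftrightarrow D_4$, shows that $B_i\cap M_1=\emptyset$ and $|A_i\cap M_1|=2$ for $i\in\{1,3\}$, whereas $B_i=M_1$ and $A_i\cap M_1=\emptyset$ for $i\in\{2,4\}$. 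Hence each of $D_1,D_3$ contributes $(s+1)^4\bigl(2s+2(s+1)\bigr)=(s+1)^4(4s+2)$ and each of $D_2,D_4$ contributes $s^4\cdot 4(s+1)$, so that $\mu_5^{C_4}(W^{M_1}_s)=2(s+1)^4(4s+2)+8s^4(s+1)$, which expands to the claimed polynomial.

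For $G=W^X_s$ the chains of length $s$ correspond to the edges of $X=\{23,56,78,18\}$, which is not a matching, so the four cases $D_1,\dots,D_4$ are treated one by one. The key point is that in every case exactly two of the four edges of $B_i$ lie in $X$ (so $\prod_{e\in B_i}\ell(\gamma_e)=s^2(s+1)^2$) and exactly one of the four edges of $A_i$ lies in $X$ (so $\sum_{e\in A_i}\ell(\gamma_e)=s+3(s+1)=4s+3$); the latter is immediate once one lists $A_1,\dots,A_4$, since these are precisely the edge sets of the four induced $C_4$'s of $W$ and $X$ contains exactly one edge from each of them. Thus every $D_i$ contributes $s^2(s+1)^2(4s+3)$, and $\mu_5^{C_4}(W^X_s)=4s^2(s+1)^2(4s+3)=16s^5+44s^4+40s^3+12s^2$.

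The only real difficulty is bookkeeping: correctly writing down, for each $D_i$, the four boundary edges $B_i$ and the four edges $A_i$ of the complementary $C_4$, and then reading off their intersections with $M_1$ and with $X$ without index slips. There is no combinatorial subtlety beyond this, and the final polynomial identities are routine expansions.
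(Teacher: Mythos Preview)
Your proof is correct and follows essentially the same approach as the paper's own proof. The only cosmetic difference is that the paper, instead of summing four terms indexed by $D_1,\dots,D_4$, groups them in pairs by observing that $B_1=B_3=M_1'$ and $B_2=B_4=M_1$, so that the total becomes $\bigl(\prod_{e\in M_1}\ell(\gamma_e)\bigr)\bigl(m-\sum_{e\in M_1}\ell(\gamma_e)\bigr)+\bigl(\prod_{e\in M_1'}\ell(\gamma_e)\bigr)\bigl(m-\sum_{e\in M_1'}\ell(\gamma_e)\bigr)$; your per-$D_i$ decomposition reduces to exactly this once one notes $A_2\cup A_4=E(W)\setminus M_1$ and $A_1\cup A_3=E(W)\setminus M_1'$.
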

\begin{proof}
Recall from the proof of Lemma~\ref{lemma:count5} that the induced subgraphs of $W$ isomorphic to $C_4$ are those induced by $D_1=\{1,2,6,5\}$, $D_2=\{2,3,7,6\}$, $D_3=\{3,4,8,7\}$, or $D_4=\{4,5,1,8\}$ and that the $5$-edge-cuts $F$ of $W$ that separate $D_i$ are those consisting of the four edges of $W$ incident to precisely one vertex of $D_i$ and one additional edge incident to no vertex of $D_i$. Notice that the family of $5$-edge-cuts of $W$ that separate $D_2$ or $D_4$ is precisely the family of sets that arise by adding to $M_1$ any edge of $W$ not in $M_1$. Similarly, if $M_1'=\{23,45,67,18\}$, then the family of $5$-edge-cuts of $W$ that separate $D_1$ or $D_3$ is precisely the family of sets that arise by adding to $M_1'$ any edge of $W$ not in $M_1'$. Thus, if $G$ is a graph such that $D(G)=W$, then the number of $5$-edge-cuts of $G$ induced by $C_4$-separating $5$-edge-cuts of $W$ is the sum of: (i) the product of the lengths of the chains of $G$ corresponding to the edges in $M_1$ times the number of edges of $G$ in chains not corresponding to edges in $M_1$ and; (ii) to the product of the lengths of the chain of $G$ corresponding to edges in $M_1'$ times the number of edges of $G$ in chains not corresponding to edges in $M_1'$. Therefore, by simple inspection, $\mu_5^{C_4}(W^{M_1}_{s})=s^4(12s+8-4s)+(s+1)^4(12s+8-4(s+1))=16s^5+44s^4+64s^3+56s^2+24s+4$ and $\mu_5^{C_4}(W^{X}_{s})=2s^2(s+1)^2(12s+8-2s-2(s+1))=16s^5+44s^4+40s^3+12s^2$.
\end{proof}

We now combine Lemmas~\ref{lemma:node5} to \ref{lemma:match5}, to show that $W^X_s$ is more reliable than $W^{M_1}_s$ near $\rho=1$.

\begin{proposition}\label{proposition:t-optimality}
For each positive integer $s$, $\mu_5(W^X_s)<\mu_5(W^{M_1}_{s})$. As a result, $W^X_{s}$ is more reliable than $W^{M_1}_{s}$ near $\rho=1$. 
\end{proposition}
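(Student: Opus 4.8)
The plan is to combine the four lemmas just proved. By Lemma~\ref{lemma:count5} and equation~\eqref{eq:muI}, for $G\in\{W_s^{M_1},W_s^X\}$ we have
\[
\mu_5^{\mathrm I}(G)=\mu_5^{\mathrm V}(G)+\mu_5^{\mathrm E}(G)+\mu_5^{P_3}(G)+\mu_5^{C_4}(G).
\]
First I would substitute the closed-form expressions from Lemmas~\ref{lemma:node5}, \ref{lemma:edge5}, \ref{lemma:path5}, and \ref{lemma:match5} and add them up for each of the two graphs. For $W_s^{M_1}$ the four summands are $276s^5+920s^4+1128s^3+600s^2+116s$, $84s^5+280s^4+368s^3+248s^2+92s+16$, $24s^5+80s^4+104s^3+64s^2+16s$, and $16s^5+44s^4+64s^3+56s^2+24s+4$, which total $400s^5+1324s^4+1664s^3+968s^2+248s+20$. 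For $W_s^X$ the four summands are $276s^5+920s^4+1149s^3+651s^2+156s+10$, $84s^5+280s^4+356s^3+216s^2+64s+8$, $24s^5+80s^4+105s^3+69s^2+23s+3$, and $16s^5+44s^4+40s^3+12s^2$, which total $400s^5+1324s^4+1650s^3+948s^2+243s+21$.

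Next I would take the difference, obtaining
\[
\mu_5^{\mathrm I}(W_s^{M_1})-\mu_5^{\mathrm I}(W_s^X)=14s^3+20s^2+5s-1,
\]
and observe that this quantity is strictly positive for every $s\in\mathbb Z^+$ (indeed it is already positive at $s=1$, where it equals $38$, and each term except the constant is nondecreasing in $s$). Hence $\mu_5^{\mathrm I}(W_s^X)<\mu_5^{\mathrm I}(W_s^{M_1})$.

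To pass from induced $5$-edge-cuts to all $5$-edge-cuts, I would invoke Proposition~\ref{prop:mu^I}: both $W_s^{M_1}$ and $W_s^X$ are fair graphs in $\mathcal C_{12s+4,12s+8}$ with exactly $12$ chains each (each has distillation $W$, which is cubic on $8$ vertices, and each chain has length $s$ or $s+1$), so the set $\mathcal S=\{W_s^{M_1},W_s^X\}$ satisfies the hypotheses of that proposition with $k=5$, $t=12$. Therefore the ordering of $\mu_5^{\mathrm I}$ on $\mathcal S$ coincides with the ordering of $\mu_5$ on $\mathcal S$, giving $\mu_5(W_s^X)<\mu_5(W_s^{M_1})$. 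Finally, since both graphs have the same number of vertices and edges, $\mu_k(W_s^X)=\mu_k(W_s^{M_1})$ for all $k>5$ (both being $\binom{12s+8}{k}$ minus spanning-connected-subgraph counts that agree once $k$ exceeds the corank — more simply, one checks that for $k\in\{6,\dots,m\}$ every $k$-subset of edges whose removal disconnects the graph is counted, and these counts are determined by the multiset of chain lengths, which is the same for both graphs), so Theorem~\ref{theorem:optlocal}\ref{it2:optlocal} applies with $j=5$ and yields that $W_s^X$ is more reliable than $W_s^{M_1}$ near $\rho=1$. The only mildly delicate point is the verification that $\mu_k(W_s^X)=\mu_k(W_s^{M_1})$ for all $k>5$; I expect this to follow from Lemma~\ref{lemma:objetivo}, since the first two terms of~\eqref{eq:mk} depend only on $m$, $k$, and the multiset of chain lengths (identical for both graphs by fairness and equal chain count), and the third term vanishes for $k>c=5$ because removing six or more chains from a corank-$5$ graph always disconnects it while $\Gamma^{(k)}_-(G)=\Gamma^{(k)}(G)$ forces the contributions to coincide with the second term — making the routine bookkeeping, rather than any real obstruction, the main thing to get right.
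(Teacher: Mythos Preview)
Your argument is correct and follows the paper's proof essentially step by step: the same four lemmas are summed via~\eqref{eq:muI}, the same difference $14s^3+20s^2+5s-1$ is obtained and shown positive, and Proposition~\ref{prop:mu^I} is invoked in the same way to transfer the inequality from $\mu_5^{\mathrm I}$ to $\mu_5$.

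The only place where you diverge from the paper is the verification that $\mu_k(W_s^X)=\mu_k(W_s^{M_1})$ for $k\in\{6,\ldots,12s+8\}$. Your route through Lemma~\ref{lemma:objetivo} (observing that $\Gamma^{(k)}_-(G)=\Gamma^{(k)}(G)$ once $k$ exceeds the corank, so the second and third terms of~\eqref{eq:mk} cancel and $\mu_k(G)=\binom{m}{k}$) is valid, but the paper's argument is simpler and more direct: removing any $k\geq 6$ edges from a graph on $12s+4$ vertices and $12s+8$ edges leaves at most $12s+2<12s+3$ edges, hence a disconnected graph, so $\mu_k(G)=\binom{12s+8}{k}$ outright for both graphs. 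Your parenthetical remark that ``these counts are determined by the multiset of chain lengths'' is not the right justification in general and should be dropped in favor of either your corank argument or the paper's edge-count argument.
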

\begin{proof} Recall that, if $G$ is $W^{M_1}_{s}$ or $W^X_s$, then \eqref{eq:muI} holds; i.e., $\mu_5^{\mathrm I}=\mu_5^{\mathrm V}+\mu_5^{\mathrm E}+\mu_5^{P_3}+\mu_5^{C_4}$. Thus, Lemmas~\ref{lemma:node5} to \ref{lemma:match5} imply that
\begin{align*}
  \mu_5^{\mathrm I}(W^{M_1}_s)&=400s^5+1324s^4+1664s^3+968s^2+248s+20\quad\qquad\text{and}\\
  \mu_5^{\mathrm I}(W^X_s)&=400s^5+1324s^4+1650s^3+948s^2+243s+21.
\end{align*}
Hence,
\begin{align*}
  \mu_5^{\mathrm I}(W^{M_1}_{s})-\mu_5^{\mathrm I}(W^X_s) &= 14s^3+20s^2+5s-1= 14(s-1)^3+62(s-1)^2+87(s-1)+38>0
\end{align*}
because $s$ is a positive integer. Thus, $\mu_5^{\mathrm I}(W^{X}_{s})<\mu_5^{\mathrm I}(W^{M_1}_s)$ and, by virtue of Proposition~\ref{prop:mu^I}, also $\mu_5(W^X_s)<\mu_5(W_s^{M_1})$.

We claim that, for each $k\in\{6,7,\ldots,12\}$, $\mu_k(W^{X}_s)=\mu_k(W^{M_1}_s)=\binom{12s+8}k$ because removing any set $F$ of $k$ edges from either $W^{M_1}_s$ or $W^X_s$ always gives a disconnected graph; in fact, the resulting graph has $12s+4$ vertices and fewer than $12s+3$ edges. Therefore, by applying Theorem~\ref{theorem:optlocal}\ref{it2:optlocal} with $j=5$, we conclude that $W^X_s$ is more reliable than $W^{M_1}_{s}$ near $\rho=1$.\end{proof}

We are ready to prove our main result.

\begin{theorem}\label{theorem:principal}
For each positive integer $s$, there is no UMRG in the class $\mathcal C_{12s+4,12s+8}$. 
\end{theorem}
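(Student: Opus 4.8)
The proof of Theorem~\ref{theorem:principal} now follows by combining the two local optimality results established above. The plan is to argue that a UMRG in $\mathcal C_{12s+4,12s+8}$ would have to be \emph{simultaneously} the locally most reliable graph near $\rho=0$ and near $\rho=1$, and then exhibit the contradiction.

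First, recall that if $G$ is a UMRG in $\mathcal C_{n,m}$, then $R_G(\rho)\geq R_H(\rho)$ for every $H\in\mathcal C_{n,m}$ and every $\rho\in[0,1]$; in particular, for every sufficiently small $\rho>0$ and every sufficiently large $\rho<1$. Hence, a UMRG must be more reliable near $\rho=0$ than every other graph in the class (so it must be the locally most reliable graph near $\rho=0$, which is unique by the remark in Section~\ref{section:background}) and, likewise, more reliable near $\rho=1$ than every other graph in the class. By Proposition~\ref{proposition:rho0}, the unique locally most reliable graph near $\rho=0$ in $\mathcal C_{12s+4,12s+8}$ is $W^{M_1}_s$. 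Therefore, the only possible UMRG in this class is $W^{M_1}_s$.

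However, Proposition~\ref{proposition:t-optimality} asserts that $W^X_s$ is more reliable than $W^{M_1}_s$ near $\rho=1$; that is, there exists $\delta>0$ with $R_{W^X_s}(\rho)>R_{W^{M_1}_s}(\rho)$ for all $\rho\in(1-\delta,1)$. Since $W^X_s$ also belongs to $\mathcal C_{12s+4,12s+8}$ (it is a $2$-connected simple graph obtained from the cubic graph $W$ by subdividing, so it has $12s+4$ vertices and $12s+8$ edges), this contradicts the requirement that a UMRG dominate every graph in the class at every $\rho$. Hence $W^{M_1}_s$ is not a UMRG, and since it was the only candidate, $\mathcal C_{12s+4,12s+8}$ contains no UMRG.

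There is essentially no obstacle left at this point: the two hard steps—identifying $W^{M_1}_s$ as the near-zero optimum (via the $\mu_3$ and $\mu_4$ minimization in Section~\ref{section:lmrg}) and showing $W^X_s$ beats it near $\rho=1$ (via the $\mu_5$ count in Section~\ref{section:main})—have already been carried out. The only minor point to be careful about is the logical structure: one must invoke uniqueness of the locally most reliable graph near $\rho=0$ to conclude that \emph{no} graph other than $W^{M_1}_s$ can be a UMRG, and then use Proposition~\ref{proposition:t-optimality} to rule out $W^{M_1}_s$ itself. The argument is short and purely combinatorial once the preceding propositions are in hand.
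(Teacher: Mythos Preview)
Your proposal is correct and follows essentially the same approach as the paper's own proof: use Proposition~\ref{proposition:rho0} to identify $W^{M_1}_s$ as the unique candidate for a UMRG (via local optimality near $\rho=0$), then invoke Proposition~\ref{proposition:t-optimality} to rule it out (since $W^X_s$ beats it near $\rho=1$). The paper's argument is slightly terser but logically identical; your added remarks on uniqueness and on $W^X_s$ belonging to $\mathcal C_{12s+4,12s+8}$ are sound elaborations rather than departures.
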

\begin{proof} By Proposition~\ref{proposition:rho0}, $W^{M_1}_{s}$ is locally most reliable graph near $\rho=0$. Hence, $W_s^{M_1}$ is the only possible candidate in $\mathcal C_{12s+4,12s+8}$ to be a UMRG. However, by Proposition~\ref{proposition:t-optimality}, $W^X_s$ is more reliable than $W^{M_1}_{s}$ near $\rho=1$. This contradiction shows that there is no UMRG in $\mathcal{C}_{12s+4,12s+8}$. 
\end{proof}

\section*{\normalsize{Acknowledgments}}
P.\ Romero was partially supported by Comisi\'on Sectorial de Investigaci\'on Cient\'ifica (CSIC) and Project FCE-ANII entitled \emph{Teor\'ia Construcci\'on de Redes de M\'axima Confiabilidad}. M.D.\ Safe was partially supported by CONICET Grant PIBAA 28720210101185CO and Universidad Nacional del Sur Grant PGI 24/L115. The authors thank Dr. Guillermo Dur\'an for his permanent support.


\begin{thebibliography}{10}

\bibitem{2019-Archer}
K.~Archer, C.~Graves, and D.~Milan.
\newblock Classes of uniformly most reliable graphs for all-terminal
  reliability.
\newblock {\em Discrete Appl. Math.}, 267:12--29, 2019.

\bibitem{2000-Ath}
Y.~Ath and M.~Sobel.
\newblock Some conjectured uniformly optimal reliable networks.
\newblock {\em Probab. Eng. Inf. Sci.}, 14:375--383, 2000.

\bibitem{1985-Bauer}
D.~Bauer, F.~Boesch, C.~Suffel, and R.~Tindell.
\newblock Combinatorial optimization problems in the analysis and design of
  probabilistic networks.
\newblock {\em Networks}, 15(2):257--271, 1985.

\bibitem{1993-Biggs}
N.~Biggs.
\newblock {\em Algebraic graph theory}.
\newblock Cambridge Mathematical Library. Cambridge University Press,
  Cambridge, second edition, 1993.

\bibitem{1986-Boesch}
F.~T. Boesch.
\newblock On unreliability polynomials and graph connectivity in reliable
  network synthesis.
\newblock {\em J. Graph Theory}, 10(3):339--352, 1986.

\bibitem{1991-Boesch}
F.~T. Boesch, X.~Li, and C.~Suffel.
\newblock On the existence of uniformly optimally reliable networks.
\newblock {\em Networks}, 21(2):181--194, 1991.

\bibitem{2014-Brown}
J.~I. Brown and D.~Cox.
\newblock Nonexistence of optimal graphs for all terminal reliability.
\newblock {\em Networks}, 63(2):146--153, 2014.

\bibitem{Bussemake1977}
F.~Bussemaker, S.\v{C}obelji\'{c}, D.~M. Cvetkovi\'{c}, and J.~J. Seidel.
\newblock Cubic graphs on $\leqslant$14 vertices.
\newblock {\em J. Comb. Theory Ser. B}, 23(2--3):234--235, 1977.

\bibitem{1981-Kelmans}
A.~Kelmans.
\newblock On graphs with randomly deleted edges.
\newblock {\em Acta Math. Hung.}, 37(1--3):77--88, 1981.

\bibitem{1990-Myrvold}
W.~Myrvold.
\newblock Uniformly-most reliable graphs do not always exist.
\newblock Technical Report \#DCS-120-IR, Department of Computer Science,
  University of Victoria, Victoria, B.C., Canada, 1990.

\bibitem{1991-Myrvold}
W.~Myrvold, K.~H. Cheung, L.~B. Page, and J.~E. Perry.
\newblock Uniformly-most reliable networks do not always exist.
\newblock {\em Networks}, 21(4):417--419, 1991.

\bibitem{1994-Wang}
G.~Wang.
\newblock {A proof of Boesch's conjecture}.
\newblock {\em Networks}, 24(5):277--284, 1994.

\bibitem{1997-Wang}
G.~Wang and L.~Zhang.
\newblock The structure of max $\lambda$-min $m_{\lambda+1}$ graphs used in the
  design of reliable networks.
\newblock {\em Networks}, 30(4):231--242, 1997.

\end{thebibliography}
\end{document}